\theoremstyle{plain} 
\newtheorem{theorem}{Theorem}[section]
\newtheorem{lemma}[theorem]{Lemma}
\newtheorem{proposition}[theorem]{Proposition}
\theoremstyle{definition} 
\newtheorem{definition}[theorem]{Definition}
\newtheorem{remark}[theorem]{Remark}
\newtheorem{model}[theorem]{Model}
\newcommand{\BC}{\mathbb{C}}
\newcommand{\BE}{\mathbb{E}}
\newcommand{\BH}{\mathbb{H}}
\newcommand{\BR}{\mathbb{R}}
\newcommand{\SF}{\mathcal{F}}
\newcommand{\SH}{\mathcal{H}}
\newcommand{\SM}{\mathcal{M}}
\newcommand{\SR}{\mathcal{R}}
\newcommand{\SY}{\mathcal{Y}}
\newcommand{\re}{\operatorname{Re}}
\newcommand{\im}{\operatorname{Im}}
\newcommand{\dev}{\operatorname{dev}}
\newcommand{\per}{\operatorname{Per}}
\newcommand{\ext}[0]{\ensuremath{\mathrm{ext}}}
\numberwithin{equation}{section}
\numberwithin{figure}{section}
\def\tec{Teich\-m\"ul\-ler\ }
\def\wei{Weierstrass\ }
\def\p{\partial}
\def\ov{\overline}
\def\om{\omega}
\def\cd{\cdot}
\def\f{\frac}
\def\g{\gamma}
\def\lra{\longrightarrow}
\def\vp{\varphi}
\def\e{\epsilon}
\def\z{\zeta}
\def\oper{\operatorname}
\def\G{\Gamma}
\def\ogup{\Omega_{Gdh}}
\def\ogdn{\Omega_{G^{-1}dh}}
\def\ga{Gauss\ }
\def\height{\operatorname{\SH}}
\def\res{\oper{Res}}
\def\supp{\oper{supp}}
\def\id{\oper{id}}
\def\k{\kappa}
\def\lm{\limits}
\def\x{\times}
\def\wh{\widehat}
\begin{document}

\title{Handle Addition for doubly-periodic Scherk Surfaces}

 \author[Matthias Weber]{Matthias Weber}
   \address{Indiana University}
   \thanks{The first author was partially supported by
    NSF grant DMS-0139476.}
   \email{matweber@indiana.edu}
    \urladdr{http://www.indiana.edu/~minimal}
   \author[M. Wolf]{Michael Wolf}
   \address{Rice University}
   \thanks{The second author was partially supported by NSF grants DMS-9971563 and DMS-0139887.}
   \email{mwolf@math.rice.edu}
   \urladdr{http://www.math.rice.edu/\textasciitilde mwolf/}


\begin{abstract}
We prove the existence of a family of embedded doubly periodic minimal
surfaces of (quotient) genus $g$ with orthogonal ends that generalizes
the classical doubly periodic surface of Scherk and the genus-one
Scherk surface of Karcher.  The proof of the family of immersed
surfaces is by induction on genus, 
while the proof of embeddedness is by the conjugate Plateau method. 
\end{abstract}

\subjclass[2000]{Primary 53A10 (30F60)}

\maketitle

\section{Introduction}
\label{sec1}

In this note we prove the existence of
a sequence $\{S_g\}$ of embedded doubly-periodic minimal surfaces,
beginning with the classical Scherk surface, indexed by the
number $g$ of handles in a fundamental domain. 
Formally, we prove
\begin{theorem}
There exists a family $\{S_g\}$ of embedded minimal surfaces,
invariant under a rank two group $\Lambda_g$ generated by horizontal orthogonal
translations. The quotient of each surface $S_g$ by $\Lambda_g$
has genus $g$ and four vertical ends arranged into two orthogonal
pairs.
\end{theorem}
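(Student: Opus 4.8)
The plan is to realize each $S_g$ through the Weierstrass representation, to push the immersed surfaces up in genus by an inductive handle-addition argument on the period problem, and then to obtain embeddedness by a separate conjugate Plateau construction. First I would fix the symmetry group that the pictures suggest: each $S_g$ should be invariant under reflections in two orthogonal vertical planes and a vertical-axis rotation, so that the quotient Riemann surface $X_g$, its Gauss map $G_g$, and its height differential $dh_g$ are pinned down by finitely many real parameters --- essentially the positions of the branch points of $G_g$ and the zeros of $dh_g$ along a fixed symmetry locus, together with one conformal modulus of $X_g$. Verticality of the ends forces $G_g \in \{0,\infty\}$ there and prescribes simple poles of $G_g\,dh_g$ and $G_g^{-1}dh_g$, fixing the logarithmic growth. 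The pair of flat cone metrics $|G_g\,dh_g|$ and $|G_g^{-1}dh_g|$ on $X_g$ then has an explicit polygonal development, and the standard closing (period) conditions $\int_\gamma G_g\,dh_g$, $\int_\gamma G_g^{-1}dh_g$, $\int_\gamma dh_g$, after the symmetries are used to make most periods automatically real or imaginary, collapse to the vanishing of a map $\Phi_g$ from this finite-dimensional parameter space to $\BR^{N(g)}$.

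Second, I would run the induction. The base case is the classical Scherk surface ($g=0$; or Karcher's surface, $g=1$), for which $\Phi$ is known to vanish at an explicit configuration. Given a solution for $X_g$, I would add a handle by cutting a slit of length $t\ge 0$ into both flat structures --- concretely, a pair of edges in the polygonal developments of $|G\,dh|$ and $|G^{-1}dh|$ that become identified; at $t=0$ this is $S_g$ carrying a node, and for $t>0$ it is a candidate of quotient genus $g+1$. The new degrees of freedom ($t$ and the placement of the slit) have to be absorbed by the new period conditions, and the heart of the induction is to show the extended map $\Phi_{g+1}$ vanishes for all small $t>0$: one checks $\Phi_{g+1}(\,\cdot\,,0)=0$ at the configuration inherited from $S_g$, shows the partial derivative of $\Phi_{g+1}$ in the remaining parameters is an isomorphism there, and applies the implicit function theorem to get a branch $t\mapsto X_{g+1}(t)$; a separate openness/properness step --- or, in the \tec handle-addition style, an explicit monotonicity of the relevant edge lengths in the flat developments --- extends the branch to a definite range of $t$ and rules out a period blowing up or an end collapsing, producing a genuine immersed minimal $S_{g+1}$ of quotient genus $g+1$ with four vertical ends in two orthogonal pairs.

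Third, with the immersed family fixed, I would prove embeddedness via conjugate Plateau. The fundamental piece of $S_g$ cut out by the two symmetry planes is the conjugate of a minimal surface spanning a polygonal (staircase-shaped, with a number of steps growing in $g$) contour built from horizontal and vertical segments and rays; this Plateau solution projects injectively onto a planar domain with only convex corners, hence is a graph and in particular embedded. Its conjugate piece then meets the symmetry planes orthogonally along planar curves and is itself embedded, which is checked by confining it to a slab/wedge bounded by those planes and invoking the maximum principle together with control of its Gauss map away from the poles. Reflecting this piece across the symmetry planes and translating by $\Lambda_g$ reassembles all of $S_g$ with no further intersections. One must also identify the analytically produced surface with this Plateau construction --- i.e. verify that the conjugate contour of the surface coming out of the induction is exactly the expected $g$-step staircase --- so that embeddedness transfers to the whole family $\{S_g\}$.

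The main obstacle is the induction step: proving that handle addition is unobstructed, i.e. that at the noded surface $S_g$ the linearized period map is onto with kernel precisely the handle-opening direction. This is where the flat-structure and \tec estimates are indispensable --- one needs precise control of how the edge lengths in the developments of $|G\,dh|$ and $|G^{-1}dh|$ move with the parameters near the degeneration, and one must exclude the competing degenerations (a vanishing end, a diverging period) as $t$ grows. A secondary but genuine difficulty is keeping the two descriptions of each $S_g$ --- the Weierstrass one from the induction and the conjugate-Plateau one --- synchronized, so that embeddedness of the inductively constructed family can actually be read off from the Plateau side.
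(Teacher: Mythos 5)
Your overall architecture (\wei data constrained by symmetry, induction on genus starting from Scherk/Karcher, conjugate Plateau plus Krust for embeddedness) matches the paper's, and your embeddedness outline is essentially the argument given there. But the heart of your induction step has a genuine gap, and it is precisely the point where the paper does something different. You propose to open the node with parameter $t$ and apply the implicit function theorem to the \emph{full} period map $\Phi_{g+1}$ at $t=0$, showing its linearization in the remaining parameters is an isomorphism. This fails for the condition attached to the one cycle that crosses the pinching locus: periods and extremal lengths of that cycle have logarithmic singularities in the node-opening parameter (this is the content of the paper's monodromy lemmas, where terms of the form $\frac{1}{\pi}\log\epsilon$ appear with \emph{opposite} signs in the two conjugate flat structures), so the relevant component of $\Phi_{g+1}$ is not differentiable at $t=0$ and no isomorphism of the full differential is available there. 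There is also a counting problem: a branch $t\mapsto X_{g+1}(t)$ on which \emph{all} period conditions vanish for every small $t>0$ would be a one-parameter family of genus-$(g+1)$ solutions, whereas the solution set should be discrete; and in any case the actual surface $S_{g+1}$ is not a small perturbation of the noded configuration, so even a correct local branch would not reach it.

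The paper's resolution is to apply the implicit function theorem only to the $g-1$ conditions attached to cycles that stay away from the node (these are differentiable across the boundary stratum by Gardiner--Masur), producing a one-dimensional locus $\SY\subset\Delta_g$ on which those conditions hold identically. The remaining condition is then obtained globally, not locally: a nonnegative height function built from differences of extremal lengths, $\height(c)=|e^{1/\ext_{\ogup}(c)}-e^{1/\ext_{\ogdn}(c)}|^2+|e^{\ext_{\ogup}(c)}-e^{\ext_{\ogdn}(c)}|^2$, is shown to be \emph{proper} on $\Delta_g$ (the properness in the delicate case where both extremal lengths degenerate simultaneously is exactly what the sign-of-logarithm monodromy argument delivers), and a Beltrami-differential computation of the derivative of extremal length under an ``infinitesimal push'' of an edge shows that $\height|_{\SY}$ has no critical points except at its zeros. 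Properness plus this non-degenerate-flow property forces a zero of $\height$ on $\SY$, i.e.\ a reflexive orthodisk pair, i.e.\ a solution of the period problem. To repair your proposal you would need to replace the full-rank IFT step at the noded surface with some such global mechanism (a degree, intermediate-value, or variational argument on a properly embedded locus), together with the asymptotic control of the degenerating periods that rules out the two flat structures degenerating at identical rates.
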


Our interest in these surfaces has a number of sources. First, of
course, is that these are a new family of embedded doubly periodic
minimal surfaces with high topological complexity but relatively small
symmetry group for their quotient genus. Next, unlike the surfaces 
produced through desingularization of degenerate configurations (see
\cite{tra1}, \cite{tra7} for example), these surfaces are not 
created as members of a degenerating family or are even known to be
close to a degenerate surface.
More concretely, there is now an abundance of embedded doubly periodic minimal 
surfaces with parallel ends due to \cite{cowe1}, while in the case of non-parallel ends, the
Scherk and Karcher-Scherk surfaces were the only examples.

 Third, one can imagine these surfaces
as the initial point for a sheared family of (quotient) genus $g$ embedded
surfaces that would limit to a translation-invariant (quotient) 
genus $g$ helicoid: such a program has recently been implemented for
case of genus one
by Baginsky-Batista \cite{brb1} and Douglas \cite{dou1}.

Our final reason is that there is a novelty to our argument in this
paper in that we combine \wei representation techniques for creating
immersed minimal surfaces of arbitrary genus with conjugate Plateau 
methods for producing embedded surfaces. The result is then
embedded surfaces of arbitrary (quotient) genus.

Intuitively,
our method to create the family 
of immersed surfaces --- afterwards proven embedded --- is 
to  add a handle within a
fundamental domain, and then flow within a moduli space of
such surfaces to a minimal representative. We developed the
 method of proof in \cite{ww1} and \cite{ww2} 
of using the theory of flat structures
to add handles
to the classical Enneper's surface and the semi-classical Costa
surface; here we observe that the method easily extends to
the case of the doubly-periodic Scherk surface --- indeed, we
will compute that the relevant flat structures for Scherk's
surface with handles are close cousins to the relevant flat
structures for Enneper's surface with handles.
(This is a small surprise as the two surfaces are not usually
regarded as having similar geometries.)

Finally, we look at a fundamental domain on the surface for the 
automorphism group of the surface and analyze its conjugate surface.
As this turns out to be a graph, Krust's theorem implies that our
original fundamental domain is embedded.

\begin{figure}[h] 
\centering
\includegraphics[width=2in]{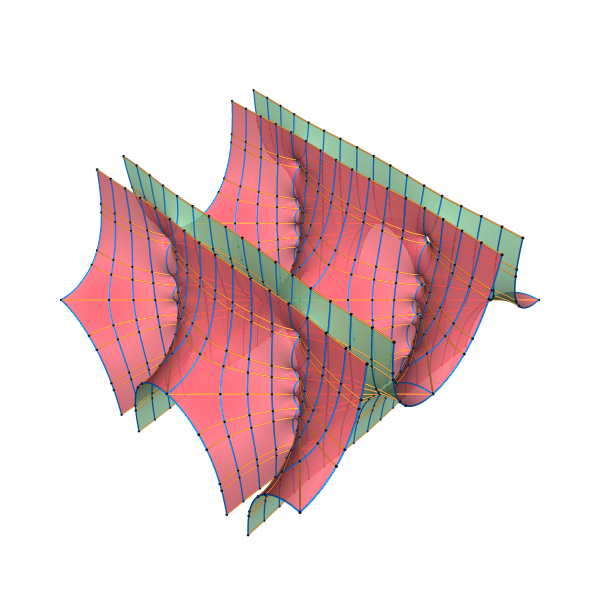} 
\caption{Scherk's surface with four additional handles}
\label{fig:genus4}
\end{figure}

Our paper is organized as follows: in the second section, we
recall the background information about the \wei
representation, conjugate surfaces, and \tec Theory, 
which we will need to construct our family of
surfaces. In the third section, we outline our method and
begin the construction by computing triples of relevant flat
structures corresponding to candidate for the \wei
representation for the $g$-handled Scherk surfaces. In the
fourth section, we define a finite dimensional moduli space
$\SM_g$ of such triples and define a non-negative height
function $\height:\SM_g\to\BR$ on that moduli space; a well-defined
$g$-handled Scherk surface $S_g$ will correspond to a zero of
that height function. Also in section 4, we prove that this height
function is proper on $\SM_g$.

In section \ref{sec5}, we show that the only
critical points of $\height$ on a certain locus $\SY_g\subset\SM_g$
arc at $\height^{-1}\{0\}\cap\SY$, proving the existence of the
desired surfaces. 
We define this locus $\SY_g\subset\SM_g$ as an extension
of a desingularization of the $(g-1)$-handled Scherk surface
$S_{g-1}$, viewed as an element of
$\SM_{g-1}\subset\p\ov{\SM_g}$, itself a stratum of the
boundary $\p\ov{\SM_g}$ of the closure $\ov{\SM_g}$ of
$\SM_g$. 

In section \ref{sec6}, we show that the resulting surfaces
$\{S_g\}$ are all embedded.

\section{Background and Notation}
\label{sec2}

\subsection{History of doubly-periodic Minimal Surfaces}
\label{sec21}

In 1835, Scherk \cite{sche1} discovered a $1$-parameter family  of 
properly embedded doubly-periodic
minimal surfaces $S_0(\theta)$ in euclidean space. These surfaces are
invariant 
under a lattice
$\Gamma=\Gamma_\theta$ of horizontal euclidean translations 
of the plane which induce
orientation-preserving isometries
of the surface $S_0(\theta)$. If we identify the $xy$-plane with $\BC$,
this lattice is spanned by 
vectors $1,
e^{i \theta}$.

In  the upper  half space, $S_0(\theta)$ is asymptotic to a family of equally spaced half
planes. The same holds in the lower half space for a different family of half planes. The angle between these
two families is the parameter $\theta\in(0,\pi/2]$. The quotient surface $S_0(\theta)/\Gamma_\theta$ is
conformally equivalent to a sphere punctured at $\pm1, \pm e^{i \theta}$.

Lazard-Holly and Meeks \cite{lhm} have shown that all embedded
genus 0 doubly-periodic surfaces belong to this family. 

Since then, many more properly embedded doubly-periodic
minimal surfaces  in euclidean space have been found:

Karcher \cite{ka4} and Meeks-Rosenberg \cite{mr4} constructed  a 3-dimensional family of genus-one examples where
the bottom and top planar ends are parallel. Some of these surfaces can be
visualized as a fence of Scherk towers.

P\'{e}rez, Rodriguez and Traizet  \cite{prt1} have shown that any doubly-periodic minimal surface of genus one with parallel ends belongs to this family.

The first attempts to add further handles to these surfaces failed, 
and similarly 
it seemed to be impossible to add
just one handle to Scherk's doubly-periodic surface between {\it every}
pair of planar ends.

However, Wei \cite{wei2} added another handle to Karcher's
examples (where all ends are parallel) by adding the handle between every
{\it second} pair of ends. This family has been generalized by
Rossman, 
Thayer and Wohlgemuth \cite{rtw1} to include more ends. 
Recently, Connor and Weber \cite{cowe1} adapted Traizet's regeneration
method to 
construct many examples of arbitrary genus and arbitrarily many ends.

Soon after Wei's example, Karcher found an orthogonally-ended  doubly-periodic
Scherk-type surface with handle by also adding the handle only between
every {\it second} pair of ends, see figure \ref{fig:scherk1(1)}.

Baginski and Ramos-Batista \cite{brb1} as well as Douglas \cite{dou1}
have shown that the Karcher example can be deformed to a 1-parameter
family by changing the angle between the ends.


On the theoretical side, Meeks and Rosenberg  \cite{mr3}
have shown the following:

\begin{theorem} A complete embedded  minimal surface in $\BE^3/\Gamma$
has only finitely many ends. In particular, it has finite topology if and only if 
it has finite genus.
\end{theorem}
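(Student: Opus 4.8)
\emph{Proof idea.} The second sentence is a formal consequence of the first: an orientable noncompact surface has finite topology exactly when it has finite genus \emph{and} finitely many ends, so once one knows a complete embedded minimal surface in $N:=\BE^3/\Gamma$ has finitely many ends, ``finite topology'' and ``finite genus'' coincide. The whole task is thus to bound the number of ends of a complete embedded minimal surface $M\subset N$.

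I would begin with two reductions. First, one reduces to $M$ \emph{properly} embedded: in the periodic setting a complete embedded minimal surface is automatically proper, the monotonicity formula ruling out the accumulation of disjoint sheets that a failure of properness would produce. Second, one splits on the rank of $\Gamma$. If $\Gamma$ has rank $3$ then $N$ is compact, so $M$ is a closed surface with no ends and nothing to prove. The rank-$1$ case ($N$ topologically $\BR^2\times S^1$) is treated by an analogous argument, so I would carry out the details in the doubly periodic case $N=T^2\times\BR$, which is also the case relevant to this paper.

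In $N=T^2\times\BR$ let $z\colon N\to\BR$ be the linear height. Since $M$ is noncompact while $T^2$ is compact, $z|_M$ is unbounded above or below --- otherwise $M$ lies in a compact slab $T^2\times[a,b]$ and, being proper, is compact; moreover, by the maximum principle and unique continuation, $z|_M$ has no interior local extremum unless $M$ is a flat slice $T^2\times\{c\}$, for which the theorem is trivial. Hence, for $T$ large, each end of $M$ is represented by an unbounded component of $M\setminus(T^2\times[-T,T])$; such a component lies entirely in $T^2\times[T,\infty)$ (a ``top'' piece) or in $T^2\times(-\infty,-T]$ (a ``bottom'' piece), and at a regular value $T$ it meets the slice $T^2\times\{T\}$ in a nonempty union of embedded closed curves on the torus. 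Distinct top pieces meet $T^2\times\{T\}$ in disjoint such unions, so the number of top ends is at most $\limsup_{T\to\infty}n(T)$, where $n(T)$ is the number of components of $M\cap(T^2\times\{T\})$; bottom ends are counted symmetrically. It therefore suffices to bound $n(T)$ uniformly in $T$.

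This last step is the crux, and I expect it to be the main obstacle: a priori nothing prevents $M$ from accumulating ever more topology --- extra handles or extra sheets --- as $z\to\infty$, so that $n(T)\to\infty$. I would rule this out by contradiction: along levels $T_k\to\infty$ with $n(T_k)\to\infty$, choose suitable points of $M$ escaping to infinity and pass to a limit, rescaling where the second fundamental form blows up and merely translating where it remains bounded. The limit is a complete embedded minimal surface --- a plane, catenoid, or helicoid in $\BR^3$, or a minimal surface in $T^2\times\BR$ arising as a limit of pairwise disjoint sheets of $M$ --- and the forced separation of those sheets coming from the embeddedness of $M$, combined with the classification of such limits and with flux/monotonicity constraints along $M$, produces a contradiction. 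Once $n(T)$ is bounded, $M$ has only finitely many top and bottom ends, which is the assertion. The genuine difficulty lies entirely in this ``no escape of topology to infinity'' step, where the embeddedness of $M$ --- providing sheet separation and, via the monotonicity formula, area lower bounds --- must be played against sharp curvature estimates for embedded minimal surfaces.
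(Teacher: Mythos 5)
First, a point of orientation: the paper does not prove this statement --- it is quoted as a background theorem of Meeks and Rosenberg \cite{mr3} and used as such --- so there is no internal argument to compare yours against; what you are proposing is a proof of that external result, and it has to be judged on its own.

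Your reduction of the second sentence to the first is correct, and the level-set framework (each end of a proper $M\subset T^2\times\BR$ eventually lives above or below a slab, so the number of ends is controlled by the number of components $n(T)$ of $M\cap(T^2\times\{T\})$) is a reasonable setup. But there are two genuine gaps. The lesser one is the opening reduction: completeness plus embeddedness does not ``automatically'' give properness via monotonicity, because the monotonicity formula presupposes exactly the local area bounds that a non-proper complete surface can fail to have; properness of complete embedded minimal surfaces is itself a deep theorem available only under additional hypotheses, and Meeks--Rosenberg simply assume properness. The essential gap is the one you flag yourself: bounding $n(T)$ uniformly is the entire content of the theorem, and the blow-up/compactness scheme you outline does not close. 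Limits of sequences of embedded minimal surfaces with unbounded topology or unbounded curvature are not in general planes, catenoids, helicoids, or smooth surfaces in $T^2\times\BR$; they can be minimal laminations with nontrivial singular sets, so the classification you invoke is false at this level of generality, and no concrete contradiction is ever identified. In effect the proposal restates the theorem (``topology cannot escape to infinity'') rather than proving it. The published argument is of a different character: roughly, one uses the properly embedded $M$ as a barrier to solve Plateau problems in the complementary domains, producing stable minimal surfaces that would have to separate infinitely many ends; stable orientable minimal surfaces in flat three-manifolds are totally geodesic, and the quotient $\BE^3/\Gamma$ cannot accommodate infinitely many disjoint separating surfaces of this kind. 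If you want a complete proof here, that barrier-and-stability mechanism (or an equivalent) is the missing ingredient.
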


\begin{theorem}
A complete embedded  minimal surface in $\BE^3/\Gamma$ has finite total curvature if and only 
if it has finite topology. In this case, the surface can be given by holomorphic \wei data 
on a compact Riemann surface with finitely many punctures which extend meromorphically to these punctures.
\end{theorem}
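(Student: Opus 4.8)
\emph{Proof proposal.} The plan is to work in the quotient $\BE^3/\Gamma$, which, since $\Gamma$ consists of horizontal translations, is isometric to $T^2\times\BR$ with $T^2=\BR^2/\Gamma$ a flat torus; let $x_3$ be the height coordinate of this product. On any minimal surface $M\subset\BE^3/\Gamma$ the function $x_3|_M$ is harmonic, and the Gauss map $N$ is unchanged under translations, hence descends to $N\colon M\to S^2$; write $g$ for the associated meromorphic function (stereographic projection of $N$) and put $dh:=\p x_3$, a holomorphic $1$-form on $M$, so that the $\Gamma$-periodic lift of $M$ is reconstructed by the \wei integral of $(g,dh)$ and $ds^2=\tfrac14\bigl(|g|+|g|^{-1}\bigr)^2|dh|^2$. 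If $x_3|_M$ is constant then $M$ is a flat horizontal torus and there is nothing to prove, so assume $x_3|_M$ nonconstant; by the previous theorem, for such surfaces finite topology is equivalent to finite genus. One soft observation is used in both directions: if $E$ is an end of $M$ with closed end-neighbourhood $\ov E\subset M$, then $x_3$ is \emph{proper} on $\ov E$, since for any $[a,b]$ the set $(x_3|_{\ov E})^{-1}[a,b]$ is closed in $\BE^3/\Gamma$ (as $M$ is properly embedded, hence closed) and contained in the compact slab $T^2\times[a,b]$.

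I would first treat the implication \emph{finite total curvature $\Rightarrow$ finite topology and meromorphic data}. Assume $\int_M|K|\,dA<\infty$. Since $K\le0$ on a minimal surface, this is $\int_M(-K)\,dA<\infty$, so Huber's theorem gives that $M$ is conformally $\ov M\setminus\{p_1,\dots,p_k\}$ for a compact Riemann surface $\ov M$; in particular $M$ has finite topology. To extend the data: $\int_M|K|\,dA=\int_M\frac{4|dg|^2}{(1+|g|^2)^2}$, so near each puncture $p_j$ the map $g$ is a holomorphic map of a punctured disk into $\ov\BC$ with finite spherical area; by the removable-singularity lemma (an essential singularity is incompatible with a finite-area image), $g$ extends meromorphically over $p_j$. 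For $dh$: the soft observation makes $x_3$ proper on the end, so after replacing $x_3$ by $-x_3$ if needed $x_3\to+\infty$ at $p_j$, whence $u:=x_3-\inf_{\ov E}x_3$ is a nonnegative harmonic function on a punctured disk tending to $+\infty$ at the puncture; a B\^ocher-type expansion then gives $x_3=-c\log|z|+\re f(z)$ with $c>0$ and $f$ holomorphic on the disk, so $dh=\p x_3=\bigl(-\tfrac{c}{2z}+\tfrac12 f'(z)\bigr)dz$ has a simple pole at $p_j$. Reintegrating the resulting meromorphic \wei forms on $\ov M$ recovers the lift of $M$.

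The reverse implication, \emph{finite topology $\Rightarrow$ finite total curvature}, is the substantial one. Since the genus is finite and the compact part contributes finite curvature, it suffices to show each annular end $E$ has finite total curvature. By the soft observation $x_3$ is proper on $E$; after reflection, $x_3\to+\infty$ along $E$. The heart of the matter is to identify the asymptotic geometry of $E$: translating $E$ downward by $t\to\infty$ and invoking interior curvature and local area estimates for embedded minimal surfaces in the fixed flat $3$-manifold $T^2\times\BR$, one extracts a subsequential limit that is a minimal lamination $\mathcal L$ of $T^2\times\BR$; every leaf of $\mathcal L$ is then flat --- a compact leaf has constant height (a harmonic function on a closed surface is constant) and so is a horizontal torus $T^2\times\{c\}$, while a noncompact leaf, being a stable limit leaf, is a vertical flat cylinder $\g\times\BR$ over a closed geodesic $\g\subset T^2$. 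Since $x_3$ is proper on $E$, this forces $E$ to be asymptotic to a finite union of vertical flat cylinders; in particular $E$ is conformally a punctured disk, $N$ tends to a limit along $E$ (so, as in the first direction, both $g$ and $dh$ extend meromorphically across the puncture), and the curvature of $E$ decays at the end, whence $\int_E|K|\,dA<\infty$. Summing over the finitely many ends gives finite total curvature for $M$.

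The one genuinely delicate step is the asymptotic analysis in the preceding paragraph. Embeddedness of $M$ enters essentially here --- an immersed annular end can have infinite total curvature (e.g.\ with an essentially singular Gauss map) --- and the real work is to produce the lamination limit, to rule out nonflat or stacked limit leaves (equivalently, to prove the curvature decay and that the end is conformally a punctured disk), and to control the multiplicity of the convergence. This is precisely the part carried out by Meeks and Rosenberg through their flux and level-set analysis of the curves $M\cap(T^2\times\{t\})$, and it can alternatively be approached via the curvature estimates of Colding--Minicozzi theory for embedded minimal surfaces. Everything else reduces to classical one-variable complex analysis --- Huber's theorem, B\^ocher's theorem, and the removable-singularity lemma for the Gauss map --- together with soft point-set topology.
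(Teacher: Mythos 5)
This statement is quoted in the paper's background section (Section \ref{sec21}) as a known result of Meeks and Rosenberg, cited to \cite{mr3}; the paper gives no proof of it, so there is no in-paper argument to compare yours against. Judged on its own merits, your outline is the standard one and is essentially sound: the direction ``finite total curvature $\Rightarrow$ finite topology and meromorphic extension'' via Huber's theorem, the finite-spherical-area/Picard argument for extending $g$, and the B\^ocher expansion of the proper harmonic coordinate $x_3$ to get a simple pole of $dh$ at each puncture is correct and complete at the level of detail you give (one small point worth making explicit: properness of $x_3$ on an annular end only gives $|x_3|\to\infty$, and you need a short argument --- e.g.\ that a harmonic function on a punctured-disk end with both $+\infty$ and $-\infty$ in its cluster set at the puncture cannot be proper --- before you may assume $x_3\to+\infty$ after a reflection).

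The genuine gap is the one you yourself flag: the implication ``finite topology $\Rightarrow$ finite total curvature'' is not proven but deferred to the Meeks--Rosenberg level-set/flux analysis (or to Colding--Minicozzi technology, which postdates \cite{mr3} and is not how the original proof goes). Your lamination sketch also has a soft spot: a noncompact limit leaf in $T^2\times\BR$ need not a priori be a vertical cylinder over a \emph{closed} geodesic --- a totally geodesic leaf could project to a dense geodesic of irrational slope in $T^2$ --- and ruling this out, together with controlling multiplicity and upgrading ``asymptotic to a flat cylinder'' to actual curvature decay and conformal finiteness of the end, is exactly where the substantive work lives. Since the paper itself treats the theorem as a citation, deferring this step is defensible, but the proposal should be read as a correct reduction to the known hard analytic input rather than as a self-contained proof.
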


\subsection{\wei Representation} 

Let $S$ be a minimal surface in space with metric $ds$, and denote the underlying Riemann surface
by $\SR$. The stereographic projection of the \ga map defines a meromorphic function $G$ on
$\SR$, and the complex extension of the third coordinate differential $dx_3$ defines a holomorphic 1-form $dh$ on 
$\SR$, called the height differential.
The data $(\SR, G, dh)$ comprise the \wei data of the minimal surface. Via
$$
\aligned
\om_1&=\frac 12(G^{-1}-G) dh
\\
\om_2&=\frac i 2(G^{-1}+G)dh
\\
\om_3&=dh
\endaligned
$$
one can reconstruct the surface as
$$
z \mapsto \re \int_\cd^z
(\om_1,\om_2,\om_2)
$$
Vice versa, this {\em \wei representation} can be used on any set of \wei data
to define
a minimal surface in space.  
Care has to be taken that the metric becomes complete. 

This procedure works locally, but 
the surface is only well-defined globally if the periods

$$
\re \int_\g
\f12(G^{-1}-G) dh,\frac i 2(G^{-1}+G)
dh , dh)
$$
vanish for every cycle $\g \subset \SR$.  The problem of finding
compatible meromorphic data $(G, dh)$ which satisfies the above 
conditions on the periods of $\om_i$ is known as `the period 
problem for the \wei representation'.

These period conditions are equivalent to

\begin{equation}
\label{tag21a}
\re \int_\g dh = 0 
\end{equation}

and
\begin{equation}\label{tag21b}
\int_\g Gdh = \overline{ \int_\g G^{-1} dh}. 
\end{equation}

For surfaces that are intended to be periodic, one can either define 
\wei data on periodic surfaces, or more commonly, one can insist that
equations \eqref{tag21a} and \eqref{tag21b} hold for only some of the 
cycles, with the rest of the homology having periods that generate
some
discrete subgroup of Euclidean translations. Our setting will be of
the latter type, with periods that either vanish or are in a rank-two 
abelian group of orthogonal horizontal translations.

\subsection{Flat Structures}

The forms $\om_i$ lead to singular flat structures on the
underlying Riemann surfaces, defined via the line elements
$ds_{\om_i} = \vert \om_i\vert$.  These singular metrics are
flat away from the support of the divisor of $\om_i$; on
elements $p$ of that divisor, the metrics  have cone points
with angles equal to $2\pi(ord_{\om_i}(p) + 1)$.  More
importantly, the periods of the forms are given by the
Euclidean geometry of the  developed image of the metric
$ds_{\om_i}$ --- a period of a cycle $\g$ is the (complex) distance 
$\BC$ between consecutive images of a distinguished point in
$\g$.  We reverse this procedure in Section \ref{sec3}: we use
putative developed images of the one-forms $Gdh$, $G^{-1}
dh$, and $dh$ to  solve formally the period problem for some
formal \wei data. For more details about the properties of flat structures 
associated to meromorphic 1-forms in connection with minimal surfaces, see
\cite{whw1}.

\subsection{The Conjugate Plateau Construction and Krust's Theorem}
\label{sec211} 

The material here will be needed in Section \ref{sec6} where we will prove the embeddedness of
our surfaces. General references for the cited theorems of this subsection are \cite{os1} and \cite{dhkw1}.

Given a minimal immersion
$$
F:z \mapsto \re \int^{z} \omega \quad,
$$
then the immersions
$$
F_t:z \mapsto \re \int^{z} e^{i t} \omega
$$
define the {\it associate family} of  minimal surfaces.  Among them, the {\it conjugate}
surface
$F^*=F_{\pi/2}$ is of special importance because symmetry properties of $F$ correspond to
symmetry properties of $F^*$ as follows:

\begin{theorem}
If a minimal surface patch  is bounded by a straight line, the conjugate patch is
bounded by a planar symmetry curve, and vice versa. Angles at corresponding vertices are the
same.

If $\ell_1$ and $\ell_2$ are a pair of intersecting straight lines on the conjugate patch
corresponding to the intersection of a pair of (planar) symmetry
curves lying on planes $P_1$ and $P_2$, then the lines $\ell_1$ and
$\ell_2$ span a plane orthogonal to the line common to $P_1$ and $P_2$.
\end{theorem}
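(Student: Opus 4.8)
The plan is to prove this classical correspondence by reducing both assertions to direct computations with the \wei data of the associate family, exploiting that conjugation multiplies the representation form by $i$.

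First I would recall that for a minimal immersion $F$ with $\omega = (\omega_1,\omega_2,\omega_3)$, a curve $\gamma$ on the surface is a straight line precisely when $F\circ\gamma$ has constant direction, i.e.\ the tangent vector $\re(e^{it}\omega)(\dot\gamma)$ is a fixed real direction independent of $t$ up to scaling; equivalently, the \ga map along $\gamma$ lies on a great circle (the image of a line in a plane through the normal). Dually, $\gamma$ is a planar symmetry curve for $F$ exactly when $F$ meets the mirror plane orthogonally along $\gamma$, which translates to the condition that $\re\int\omega$ maps $\gamma$ into a plane and the surface is symmetric across it; in \wei terms this is again a great-circle condition on $G|_\gamma$, but now rotated by a quarter turn relative to the straight-line condition. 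The key algebraic fact is that the quarter-turn rotation in the range of the \ga map is exactly the effect of replacing $\omega$ by $e^{i\pi/2}\omega = i\omega$. So the first main step is to set up these two boundary conditions carefully in terms of $(G,dh)$ along a boundary arc, and then observe that the substitution $\omega \mapsto i\omega$ interchanges them; the equality of angles at vertices follows because the associate family is a family of \emph{isometries}, so the first fundamental form — and hence all intrinsic angles between boundary arcs meeting at a vertex — is preserved, while the extrinsic angle between a line and a plane (or between two planes) is read off from the same intrinsic data together with the normal, which rotates uniformly.

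For the second assertion, I would argue as follows. Let $p$ be the common vertex of the two intersecting straight lines $\ell_1,\ell_2$ on the conjugate patch $F^*$, corresponding to arcs $c_1,c_2$ on $F$ that are planar symmetry curves in planes $P_1,P_2$. Each $c_j$ meets $P_j$ orthogonally, so along $c_j$ the normal $N$ to $F$ stays in $P_j$; in particular $N(p) \in P_1\cap P_2$. The direction of the line common to $P_1$ and $P_2$ is therefore $N(p)$. Now under conjugation the normal is unchanged, $N^*(p) = N(p)$ (only the immersion changes, not the \ga map), so $\ell_1$ and $\ell_2$ both lie in the tangent plane $T_p = N(p)^\perp$ of $F^*$ at $p$, which is precisely the plane orthogonal to $N(p)$, i.e.\ orthogonal to $P_1\cap P_2$. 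That is exactly the claim, so the second part is essentially a corollary of the first together with the elementary observation that a line on a minimal surface lies in the tangent plane at each of its points while the \ga map is a conjugation invariant.

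The main obstacle will be the bookkeeping in the first step: making precise the assertion that ``straight line'' and ``planar symmetry curve'' are the two $\omega \mapsto i\omega$--related boundary conditions, and in particular tracking orientations and the $2\pi$ versus $\pi$ ambiguity in how the \ga map sweeps a great circle, so that the correspondence is a genuine bijection on patches (including the behavior at endpoints where arcs of the two types meet). The cleanest route is to work in a boundary-adapted conformal coordinate where the boundary arc is a real interval, write the condition for $\re\int e^{it}\omega$ to trace a line, respectively to lie in a plane of symmetry, as vanishing of an explicit combination of $\re(e^{it}\omega_k)$, and check that these two systems of conditions are exchanged by $t \mapsto t + \pi/2$; once that is in hand, the angle statements and the second assertion follow with little further work. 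I would also note that all of this is standard (the statement is Schwarz's reflection principle for minimal surfaces together with Björling-type considerations), so I would keep the exposition brief and cite \cite{os1}, \cite{dhkw1} for the details rather than reproducing them in full.
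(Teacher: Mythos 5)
Your proposal is correct and follows essentially the same route as the paper: the first paragraph is deferred to the standard literature, and the second is proved by noting that along a planar symmetry curve the normal lies in the mirror plane, hence at the intersection point it lies in $P_1\cap P_2$, and since the \ga map is preserved under conjugation the two lines lie in the tangent plane orthogonal to that common line. No gaps; the extra \wei-data discussion for the first paragraph is more machinery than the paper uses but leads to the same citation-level conclusion.
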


\begin{proof}  The first paragraph is well-known: see \cite{ka6} for
example. The second paragraph is elementary, for if $P$ is a plane
of reflective symmetry, then the normal to the surface must lie in
the plane. At the intersection of two such planes, the normal must
lie in both planes, hence in the line $L$ of intersection of the two
planes.  But the
Gauss map is preserved by the conjugacy correspondence, hence both 
of the corresponding straight lines $\ell_1$ and $\ell_2$
are orthogonal to $L$.  Thus the plane spanned by $\ell_1$ and
$\ell_2$ is normal to $L$, the line of intersection of $P_1$ and
$P_2$.
\end{proof}

The best-known example of a conjugate pair are the catenoid and one full turn of the helicoid.

The second-best-known examples are the singly- and doubly-periodic Scherk surfaces.

To get started with the conjugate Plateau construction, one can take a boundary contour
bounded by straight lines and solves the Plateau problem using the classic result of Douglas and Rad\'o (see \cite{la1} for a proof):

\begin{theorem} 
Let $\Gamma$ be a Jordan curve in $\BE^3$  bounding a
finite-area disk. Then there exists a continuous map $\psi$ from the closed unit disk $\bar D$
into  
$\BE^3$ such that
\begin{enumerate}
\item $\psi$ maps $S^1=\partial D$ monotonically onto $\Gamma$.
\item $\psi$ is harmonic and almost conformal in $D$.
\item $\psi(\bar D)$ minimizes the area among all admissible maps.
\end{enumerate}
\end{theorem}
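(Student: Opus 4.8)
The plan is to realize $\psi$ as a minimizer of the Dirichlet energy $E(\psi)=\frac12\int_D\lvert\nabla\psi\rvert^2$ over the class $\mathcal{A}$ of maps $\psi\in W^{1,2}(D,\BE^3)$ whose boundary trace is a continuous, weakly monotone parametrization of $\Gamma$, and then to upgrade an energy minimizer to an area minimizer. The starting point is the pointwise inequality between the area integrand and the energy integrand, $\lvert\psi_x\wedge\psi_y\rvert\le\frac12(\lvert\psi_x\rvert^2+\lvert\psi_y\rvert^2)$, with equality precisely where $\psi$ is conformal (that is, $\lvert\psi_x\rvert=\lvert\psi_y\rvert$ and $\psi_x\perp\psi_y$); integrating gives $\mathrm{Area}(\psi)\le E(\psi)$. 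A reparametrization argument — given any admissible map, precompose the domain with a diffeomorphism chosen to spread out the conformal distortion — shows $\inf_{\mathcal{A}}\mathrm{Area}=\inf_{\mathcal{A}}E$. Hence it suffices to produce a minimizer of $E$ and then verify that it is almost conformal, which automatically forces it to be an area minimizer.

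For existence of the energy minimizer I would take a minimizing sequence $\psi_k$ and first replace each $\psi_k$ by the harmonic extension of its own boundary values; this does not increase $E$, keeps the map in $\mathcal{A}$, and makes every coordinate function harmonic. The essential difficulty is that $\mathcal{A}$ is invariant under the non-compact group of conformal automorphisms of $D$, so the boundary parametrizations in a minimizing sequence may degenerate; this is resolved by normalizing via the three-point condition (prescribing the images of three fixed points of $S^1$). With this normalization the Courant--Lebesgue lemma supplies a modulus of continuity for the boundary values that is uniform over maps of bounded energy, so Arzel\`a--Ascoli extracts a subsequence whose boundary values converge uniformly to a continuous, weakly monotone parametrization of $\Gamma$. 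Weak $W^{1,2}$ compactness together with lower semicontinuity of $E$ then yields a harmonic limit $\psi\in\mathcal{A}$ with $E(\psi)=\inf_{\mathcal{A}}E$; continuity of $\psi$ on $\bar D$ follows from standard boundary regularity for harmonic functions with continuous boundary data.

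It remains to show that this $\psi$ is almost conformal. Since $\psi$ is harmonic, its Hopf differential $\Phi=\langle\psi_z,\psi_z\rangle\,dz^2$ is holomorphic on $D$. If $\Phi$ were not identically zero, composing $\psi$ with a suitable one-parameter family of self-diffeomorphisms of $\bar D$ — which keeps the map in $\mathcal{A}$, since only weak monotonicity of the boundary parametrization is required — would strictly decrease $E$, contradicting minimality; this is the classical inner-variation computation, in which the first variation of $E$ under domain reparametrization is a pairing against $\Phi$. Therefore $\Phi\equiv0$, i.e. $\psi$ is almost conformal, so the area and energy integrands agree pointwise and $\mathrm{Area}(\psi)=E(\psi)=\inf_{\mathcal{A}}E=\inf_{\mathcal{A}}\mathrm{Area}$. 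This gives $(3)$; being harmonic and almost conformal, $\psi$ is a (possibly branched) conformal minimal map, which is $(2)$; and the monotone boundary parametrization is $(1)$.

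The main obstacle is precisely the non-compactness just noted: without the three-point normalization a minimizing sequence of parametrizations can concentrate and lose the curve $\Gamma$ in the limit, so the compactness step genuinely depends on combining the three-point condition with the Courant--Lebesgue lemma. A secondary technical point is making rigorous the inner-variation argument that forces $\Phi\equiv0$ — in particular, arguing that the admissible reparametrizations (which are allowed to move $S^1$ within itself, but not $\Gamma$) exhaust enough directions to kill a holomorphic quadratic differential on the disk — together with the boundary-regularity input needed for continuity of $\psi$ up to $S^1$.
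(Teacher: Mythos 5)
The paper does not prove this statement: it is quoted verbatim as the classical Douglas--Rad\'o existence theorem for the Plateau problem, with a pointer to the literature (Lawson's notes) for the proof, and it is used later only as a black box in the conjugate Plateau construction of Section 6. So there is no in-paper argument to compare against; what you have written is a reconstruction of the standard Courant-style proof, and as a sketch it is correct and complete in outline. The three load-bearing ingredients are all present and correctly placed: (i) the reduction of area minimization to energy minimization, which beyond the pointwise inequality $\mathrm{Area}\le E$ requires Morrey's $\epsilon$-conformality lemma to get $\inf_{\mathcal A}E\le\inf_{\mathcal A}\mathrm{Area}$ --- you name the reparametrization idea, and this is genuinely the most technical step for merely $W^{1,2}$ competitors; (ii) compactness of a normalized (three-point condition) minimizing sequence of harmonic replacements via Courant--Lebesgue plus lower semicontinuity; and (iii) conformality of the minimizer via inner variations annihilating the holomorphic Hopf differential, where, as you note, one needs boundary-tangential variations (not just compactly supported ones) to conclude $\Phi\equiv0$ rather than merely that $\Phi$ is holomorphic. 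You have correctly identified these as the points requiring care, so I have nothing to add beyond the observation that for the purposes of this paper the theorem is an imported classical fact rather than something to be reproved.
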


Here  {\it almost conformal} allows a vanishing derivative and {\it admissible maps} on the disk
are required to be in $H^{1,2}(D, \BE^3)$ so that their trace on $\partial D$ can be represented
by a weakly monotonic, continuous mapping $\partial D \to \Gamma$.

For {\it good} boundary curves, one obtains the embeddedness and uniqueness of the Plateau
solution for free by

\begin{theorem}
If $\Gamma$ has a one-to-one parallel projection onto a planar convex
curve, then $\Gamma$ bounds at most one disk-type minimal surface which can be expressed as
the graph of a function $f: \BE^2\to\BE^3$. 
\end{theorem}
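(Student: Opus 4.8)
The plan is to prove the (essentially classical, Rad\'o) statement in two stages: first that \emph{every} disk-type minimal surface spanning $\Gamma$ is a vertical graph over the convex region enclosed by the projection, and then that two minimal graphs with the same boundary must coincide. Normalize coordinates so the parallel projection is the vertical one, $\pi(x_1,x_2,x_3)=(x_1,x_2)$, and write $\pi(\Gamma)=\p\Omega$ where $\Omega\subset\BE^2$ is the bounded convex domain it bounds; by hypothesis $\pi|_\Gamma\colon\Gamma\to\p\Omega$ is a homeomorphism. Let $\psi\colon\ov D\to\BE^3$ be any disk-type minimal surface with $\psi(\p D)=\Gamma$, taken in a conformal parametrization, so each coordinate function $\langle\psi,e\rangle$ is harmonic on $D$.

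First I would establish the graph property. Suppose the tangent plane of $\psi$ at an interior point $p$ (in the branched sense if $\psi$ happens to have a branch point there) is vertical, let $P$ be the vertical plane through $\psi(p)$ parallel to it, and let $\nu$ be the horizontal unit normal to $P$. The harmonic function $w=\langle\psi-\psi(p),\nu\rangle$ is not identically zero --- otherwise $\psi(D)\subset P$, impossible since $\pi(\Gamma)=\p\Omega$ is not contained in the line $\pi(P)$ --- and it vanishes to order $k\ge 2$ at $p$; hence near $p$ its nodal set is $k$ analytic arcs crossing transversally at $p$, so at least four nodal arcs emanate from $p$. A nodal arc of a nonconstant harmonic function on $D$ can neither bound a subregion of $D$ (maximum principle) nor terminate in the interior, so each reaches $\p D$; thus $w$ has at least four zeros on $\p D$. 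But the zeros of $w$ on $\p D$ are $\psi^{-1}(\Gamma\cap P)$, and $\pi(\Gamma\cap P)\subset\p\Omega\cap\pi(P)$ has at most two points because $\pi(P)$ is a line and $\p\Omega$ is convex --- a contradiction. Therefore $\psi$ is an immersion with nowhere-vertical tangent planes, $\pi\circ\psi\colon D\to\Omega$ is a local diffeomorphism which extends continuously to $\ov D\to\ov\Omega$ carrying $\p D$ homeomorphically onto $\p\Omega$, hence is a proper degree-one map, hence (covering-space theory) a diffeomorphism $D\to\Omega$. So $\psi(\ov D)$ is the graph of some $f\in C^\infty(\Omega)\cap C^0(\ov\Omega)$ solving the minimal surface equation on $\Omega$, with $f|_{\p\Omega}$ prescribed by $\Gamma$.

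For uniqueness, let $\Sigma_1,\Sigma_2$ be disk-type minimal surfaces with boundary $\Gamma$. By the previous step each is the graph of a solution $f_1,f_2\in C^\infty(\Omega)\cap C^0(\ov\Omega)$ of the minimal surface equation, and $f_1=f_2$ on $\p\Omega$ since both graphs have boundary $\Gamma$. Subtracting the two equations and writing the difference in divergence form, $v=f_1-f_2$ satisfies a linear, locally uniformly elliptic equation $\oper{div}(A\,\nabla v)=0$ with no zeroth-order term, the matrix $A$ assembled from mean-value coefficients along the segment joining $\nabla f_1$ to $\nabla f_2$. The maximum principle forces $\max_{\ov\Omega}v=\max_{\p\Omega}v=0$ and $\min_{\ov\Omega}v=0$, so $v\equiv 0$, i.e.\ $\Sigma_1=\Sigma_2$. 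Hence $\Gamma$ bounds at most one disk-type minimal surface, and it is a graph.

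The main obstacle is the first step: converting an interior vertical tangent plane into the nodal-arc count, making sure that (a) possible interior branch points of the a priori merely harmonic, almost-conformal Plateau solution are absorbed by the same harmonic-function argument, and (b) the upgrade from ``local diffeomorphism with degree-one boundary-homeomorphic projection'' to ``global diffeomorphism'' is correctly justified. Once the graph property is in hand, the uniqueness step is a routine application of the comparison principle for the minimal surface equation.
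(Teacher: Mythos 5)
This is Rad\'o's classical theorem, which the paper quotes from the literature (the references \cite{os1}, \cite{dhkw1} given at the start of the subsection) without proof, so there is no argument in the paper to compare against. Your proposal is the standard and correct proof: the nodal-line count for the harmonic function $\langle\psi-\psi(p),\nu\rangle$ at a point with vertical tangent plane (or branch point), the degree-one argument upgrading the local diffeomorphism $\pi\circ\psi$ to a global one, and the comparison principle for the minimal surface equation in divergence form. One small refinement: if $\p\Omega$ contains a straight segment and $\pi(P)$ is a supporting line along it, then $\p\Omega\cap\pi(P)$ can be an entire segment rather than at most two points; the contradiction should therefore be phrased in terms of sign changes --- the four nodal arcs force at least four sign changes of the signed distance to $\pi(P)$ along $\p\Omega$, whereas convexity permits at most two --- which is exactly what your alternating-sector picture already provides.
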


The embeddedness of a Plateau solution sometimes implies
the embeddedness of the conjugate surface. This observation is due to Krust (unpublished),
see
\cite{ka6}.

\begin{theorem}[Krust]
If a minimal surface is a graph over a convex domain, then the
conjugate piece is also a graph.
\end{theorem}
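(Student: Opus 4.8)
The plan is to pass to the \wei representation and reduce the statement to a monotonicity property of the horizontal projection of the conjugate surface.

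\emph{Set-up and reduction.} Parametrize the graph conformally by a domain $D$ with \wei data $(G,dh)$, $dh=h'\,dz$, so that $F=\re\int^{z}(\om_1,\om_2,\om_3)$ and $F^*=\re\int^{z}i(\om_1,\om_2,\om_3)$, where $\om_1\pm i\om_2=\mp G^{\pm1}\,dh$ and $\om_3=dh$. That $F$ is a graph over $\Omega$ means precisely that the horizontal projection $\pi:=F_1+iF_2$ is a diffeomorphism of $D$ onto $\Omega$; since $\Omega$ is convex it is simply connected, so $P:=\int^{z}G\,dh$ and $Q:=\int^{z}G^{-1}dh$ are single-valued holomorphic functions on $D$. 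A direct computation from $F_j=\re\int\om_j$ and $F^*_j=-\im\int\om_j$ yields the two identities
\[
\pi-i\pi^*=-P,\qquad \pi+i\pi^*=\ov Q,
\]
where $\pi^*:=F^*_1+iF^*_2$. The Jacobians of both $\pi$ and $\pi^*$ equal $\tfrac14|h'|^2\bigl(|G|^2-|G|^{-2}\bigr)$, which never vanishes because the tangent planes of a graph are nowhere vertical ($|G|\ne1$ on $D$); applying the rotation $(x_1,x_2,x_3)\mapsto(x_1,-x_2,-x_3)$ of $\BE^3$ if necessary --- which preserves both hypothesis and conclusion and replaces $G$ by $G^{-1}$ --- we may assume $|G|>1$ throughout $D$. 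Hence $\pi^*$ is already a local diffeomorphism, and the whole problem reduces to showing that $\pi^*$ is \emph{injective}.

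\emph{The monotonicity step.} Suppose, for contradiction, that $\pi^*(z_0)=\pi^*(z_1)$ with $z_0\ne z_1$; then $\zeta_0:=\pi(z_0)\ne\zeta_1:=\pi(z_1)$ in $\Omega$. Let $e^{i\alpha}$ be the unit vector pointing from $\zeta_0$ to $\zeta_1$. This is exactly where convexity enters: the segment $\sigma(t)=(1-t)\zeta_0+t\zeta_1$ stays in $\Omega$, so $\gamma:=\pi^{-1}\circ\sigma$ is a well-defined curve in $D$ from $z_0$ to $z_1$ along which $\tfrac{d}{dt}\pi(\gamma(t))\equiv e^{i\alpha}$. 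Set $u(t):=(Gh')(\gamma(t))\,\dot\gamma(t)$. Differentiating the two identities along $\gamma$ and using $dP=G\,dh$, $dQ=G^{-1}dh$ (so $\tfrac{d}{dt}P(\gamma)=u$ and $\tfrac{d}{dt}\ov{Q(\gamma)}=\ov{G^{-2}u}$), the constraint $\tfrac{d}{dt}\pi(\gamma)\equiv e^{i\alpha}$ becomes $\ov{G^{-2}u}-u=2e^{i\alpha}$, and therefore
\[
\tfrac{d}{dt}\pi^*(\gamma(t))=-i\bigl(u(t)+e^{i\alpha}\bigr),\qquad \bigl|\,u(t)+2e^{i\alpha}\,\bigr|=|G(\gamma(t))|^{-2}\,|u(t)|<|u(t)|.
\]
Squaring the inequality gives $\re\bigl(e^{-i\alpha}u(t)\bigr)<-1$, so the component of $\tfrac{d}{dt}\pi^*(\gamma(t))$ in the fixed direction $ie^{i\alpha}$ equals $-\re\bigl(e^{-i\alpha}u(t)\bigr)-1>0$ for every $t$. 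Consequently $t\mapsto\re\bigl(\ov{ie^{i\alpha}}\,\pi^*(\gamma(t))\bigr)$ is strictly increasing on $[0,1]$, which forces $\pi^*(z_0)\ne\pi^*(z_1)$ --- a contradiction. Thus $\pi^*$ is injective, hence a diffeomorphism onto its image, i.e.\ $F^*$ is a graph.

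\emph{Main obstacle.} The one genuinely substantive point is the middle computation: one must notice that, along the preimage of a straight segment in $\Omega$, the velocity of the conjugate projection $\pi^*\!\circ\gamma$ has a sign-definite component transverse to that segment --- this is where $|G|>1$, i.e.\ non-verticality of the tangent planes of a graph, is used --- and that convexity of $\Omega$ is precisely what guarantees the segment $[\zeta_0,\zeta_1]$, and hence its preimage $\gamma$, lies in the domain. Once the identities $\pi-i\pi^*=-P$ and $\pi+i\pi^*=\ov Q$ are recorded, the remaining manipulations are routine.
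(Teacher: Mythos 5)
The paper does not actually prove this statement: Krust's theorem is quoted as unpublished background (``This observation is due to Krust\dots, see \cite{ka6}'') and is then used as a black box in Section~\ref{sec6}. So there is no in-paper argument to compare against; what you have written is essentially the standard proof of Krust's theorem (the one in Dierkes--Hildebrandt et al.\ and in Karcher's notes), and I checked it line by line: the identities $\pi-i\pi^*=-P$ and $\pi+i\pi^*=\ov Q$ are correct for the sign conventions of the \wei representation used here, the common Jacobian $\tfrac14|h'|^2(|G|^2-|G|^{-2})$ is right, the rotation about the $x_1$-axis indeed replaces $G$ by $1/G$ and commutes with conjugation, and the monotonicity computation along $\gamma=\pi^{-1}\circ\sigma$ (the one place where both convexity and $|G|>1$ enter) is sound. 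The only blemish is cosmetic: with $\sigma(t)=(1-t)\zeta_0+t\zeta_1$ the velocity is $\zeta_1-\zeta_0$, not the unit vector $e^{i\alpha}$, so either reparametrize by arc length or carry the factor $|\zeta_1-\zeta_0|>0$ through the constraint $\ov{G^{-2}u}-u=2\dot\sigma$; it rescales the inequality $\re(e^{-i\alpha}u)<-|\zeta_1-\zeta_0|$ but changes no signs, and the conclusion that the $ie^{i\alpha}$-component of $\tfrac{d}{dt}\pi^*(\gamma)$ is strictly positive stands. Your proof is complete and could serve as a self-contained justification of a result the paper only cites.
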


\subsection{\tec Theory}
\label{sec22}

For $M$ a smooth surface, let Teich$(M)$ denote the \tec
space of all conformal structures on $M$ under the
equivalence relation given by pullback by diffeomorphisms
isotopic to the identity map id:~$M\lra M$. Then it is
well-known that Teich$(M)$ is a smooth finite-dimensional
manifold if $M$ is a closed surface.

There are two spaces of tensors on a Riemann surface $\SR$
that are important for the \tec theory. The first is the
space QD$(\SR)$ of holomorphic quadratic differentials, i.e.,
tensors which have the local form $\Phi=\vp(z)dz^2$ where
$\vp(z)$ is holomorphic. The second is the space of Beltrami
differentials Belt$(\SR)$, i.e., tensors which have the local
form $\mu=\mu(z)d\bar z/dz$.

The cotangent space $T^*_{[\SR]}$(Teich$(M)$) is canonically
isomorphic to QD$(\SR)$, and the tangent space is given by
equivalence classes of (infinitesimal) Beltrami differentials, where
$\mu_1$ is equivalent to $\mu_2$ if
$$
\int_\SR\Phi(\mu_1-\mu_2) = 0\qquad\text{for every }\
\Phi\in\text{QD}(\SR).
$$

If $f:\BC \to \BC$ is a diffeomorphism, then the Beltrami
differential associated to the pullback conformal structure
is $\nu = \frac{f_{\bar z}}{f_z} \frac{d\bar z}{dz}$. If $f_{\e}$ is a
family of such diffeomorphisms with $f_0= id$, then the
infinitesimal Beltrami differential is given by

$$
\frac d{d\e}\bigm|_{\e=0}\nu_{f_\e}=\left(\frac {d}{d\e}\bigm|_{\e=0} f_\e\right)_{\bar z}
$$

We will
carry out an example of this computation in section \ref{sec52}

A holomorphic quadratic differential comes with a picture that is
a useful aid to one's intuition about it. The picture is that of
a pair of transverse measured foliations, whose properties we sketch
briefly (see \cite{flp} for more details).

A $C^k$ measured foliation on $\SR$ with singularities
$z_1,\dots,z_l$ of order $k_1,\dots,k_l$ (respectively) is given by
an open covering $\{U_i\}$ of $\SR-\{z_1,\dots,z_l\}$ and open sets
$V_1,\dots,V_l$ around $z_1,\dots,z_l$ (respectively) along with
real valued  $C^k$ functions $v_i$ defined on $U_i$ s.t.
\begin{enumerate}
\item $|dv_i|=|dv_j|$ on $U_i\cap U_j$
\item $|dv_i|=|\im(z-z_j)^{k_j/z}dz|$ on $U_i\cap V_j$
\end{enumerate}

Evidently, the kernels $\ker dv_i$ define a $C^{k-1}$ line field on
$\SR$ which integrates to give a foliation $\SF$ on
$\SR-\{z_1,\dots,z_l\}$, with a $k_j+2$ pronged singularity at $z_j$.
Moreover, given an arc $A\subset\SR$, we have a well-defined measure
$\mu(A)$ given by
$$
\mu(A) = \int_A |dv|
$$
where $|dv|$ is defined by $|dv|_{U_i}=|dv_i|$. An important feature
that we require
of this measure is its ``translation invariance''. That is, suppose
$A_0\subset\SR$ is an arc transverse to the foliation $\SF$,
with $\p A_0$ a pair of points, one on the leaf $l$ and one on the
leaf $l'$; then, if
we deform $A_0$ to $A_1$ via an isotopy through arcs $A_t$
that maintains the
transversality of the image of $A_0$ at every time, and also
keeps the endpoints of the arcs $A_t$ fixed on the leaves
$l$ and $l'$, respectively, then we require that
$\mu(A_0)=\mu(A_1)$.

Now a holomorphic quadratic differential $\Phi$ defines a measured
foliation in the following way. The zeros $\Phi^{-1}(0)$ of $\Phi$
are well-defined; away from these zeros, we can choose a canonical
conformal coordinate $\z(z)=\int^z\sqrt\Phi$ so that $\Phi=d\z^2$.
The local measured foliations ($\{\re\z=\oper{const}\}$,
$|d\re\z|$) then piece together to form a measured foliation known
as the vertical measured foliation of $\Phi$, with the
translation invariance of this measured foliation of $\Phi$
following from Cauchy's theorem.

Work of Hubbard and Masur \cite{hum1} (see also alternate proofs
in \cite{ker1,gar1, wo98}, following Jenkins \cite{j1}
and Strebel \cite{stre1}, showed that given a measured foliation
$(\SF,\mu)$ and a Riemann surface $\SR$, there is a unique holomorphic
quadratic differential $\Phi_\mu$ on $\SR$ so that the horizontal
measured foliation of $\Phi_\mu$ is equivalent to  $(\SF,\mu)$.

\subsection{Extremal length}
The extremal length $\ext_\SR([\g])$ of a class of
arcs $\G$ on a Riemann surface $\SR$ is defined to be the conformal
invariant
$$
\sup_\rho\f{\ell^2_\rho(\G)}{\text{Area}(\rho)}
$$
where $\rho$ ranges over all conformal metrics on $\SR$ with areas
$0<\text{Area}(\rho)<\infty$ and $\ell_\rho(\G)$ denotes the infimum
of $\rho$-lengths of curves $\g\in\G$. Here $\G$ may consist of all
curves freely homotopic to a given curve, a union of free homotopy
classes, a family of arcs with endpoints in a pair of given
boundaries, or even a more general class.  Kerckhoff \cite{ker1}
showed that this definition of extremal lengths of curves extended
naturally to a definition of extremal lengths of measured foliations.

For a class $\G$ consisting of all curves freely homotopic to a
single curve $\g\subset M$, (or more generally, a measured foliation
$(\SF, \mu)$) we see that $\ext_{(\cd)}(\G)$ (or $\ext_{(\cd)}(\mu)$)
can be construed as a real-valued function $\ext_{(\cd)}(\G)$:
Teich$(M)\lra\BR$. Gardiner \cite{gar1} showed that
$\ext_{(\cd)}(\mu)$ is differentiable and Gardiner and Masur \cite{gama1}
showed
that $\ext_{(\cd)}(\mu)\in C^1$ (Teich$(M)$).
In our particular applications, the extremal length
functions on our moduli spaces will be real analytic: this
will be explained in Proposition \ref{prop:extanalytic}.

Moreover Gardiner
computed that
$$
d\ext_{(\cd)}(\mu)\bigm|_{[\SR]} = 2\Phi_{\mu}
$$
so that
\begin{equation}\label{tag22}
\left(d\ext_{(\cd)}(\mu)\bigm|_{[\SR]}\right)[\nu] =
4\re\int_\SR\Phi_\mu\nu.
\end{equation}

\subsection{A Brief Sketch of the Proof}
\label{sec23}

In this subsection,
we sketch basic logic of the approach and the ideas of the proofs, as a step-by-step recipe.

{\bf Step 1. Draw the Surface.} The first step in proving the 
existence of a minimal surface is to work out a detailed proposal.
This can either be done numerically, as in the work of
(i) Thayer \cite{tha1} for the Chen-Gackstatter surfaces we
discussed in \cite{ww1}, (ii) Boix
and Wohlgemuth \cite{bow1,w2,w3,w4} for the low genus surfaces we treated
in \cite{ww2} and (iii) Figures \ref{fig:scherk1} and \ref{fig:scherk1(1)} below for the
present case;
or it can be schematic, showing how various portions of the surface
might fit together, using plausible symmetry assumptions.

{\bf Step 2. Compute the Divisors for the Forms $Gdh$
and $G^{-1}dh$.} From the model that we drew in Step 1, we can
compute the divisors for the \wei data, which we just defined
to be the \ga map $G$ and the 'height' form $dh$. (Note here
how important it is that the \wei representation is given in 
terms of geometrically defined quantities --- for us, this gives the
passage between the extrinsic geometry of the minimal surface 
as defined in Step 1 and the conformal geometry and \tec
theory of the later steps.) Thus we can also compute the
divisors for the meromorphic forms $Gdh$ and $G^{-1}dh$ on the
Riemann surface (so far undetermined, but assumed to exist)
underlying the minimal surface. Of course the divisors
for a form determine the form up to a constant, so the divisor
information nearly determines the \wei data for our surface.
Here our schematics suggest the appropriate divisor information,
and this is confirmed by the numerics.

{\bf Step 3. Compute the Flat Structures for the Forms $Gdh$
and $G^{-1}dh$ required by the period conditions.} 
A meromorphic form on a Riemann surface defines
a flat singular (conformal) metric on that surface: for example,
from the form $Gdh$ on our putative Riemann surface, we determine
a line element $ds_{Gdh}= |Gdh|$. This metric is locally Euclidean
away from the support of the divisor of the form and has a 
complete  Euclidean cone structure in a neighborhood of a zero or pole of the
form.  Thus we can develop the universal cover of the surface
into the Euclidean plane.

The flat structures for the forms $Gdh$ and $G^{-1}dh$ are
not completely arbitrary: because the periods for the pair of
forms must be conjugate (formula \ref{tag21b}), the flat structures must
develop into domains which have a particular
Euclidean geometric relationship to one another. This relationship is
crucial to our approach, so we will dwell on it somewhat. If the
map $\dev:\Omega \lra \BE^2$ is the map which develops the 
flat structure of a form, say $\alpha$, on a domain 
$\Omega$ into $\BE^2$, then the map $\dev$ pulls back the canonical
form $dz$ on $\BC \cong \BE^2$ to the form $\alpha$ on $\omega$.
Thus the periods of $\alpha$ on the Riemann surface are given
by integrals of $dz$ along the developed image of paths in $\BC$,
i.e. by differences of the complex numbers representing 
endpoints of those paths in $\BC$.

We construe all of this as requiring that the flat structures 
develop into domains that are ``conjugate'': if we collect all
of the differences in positions of parallel sides for the developed image
of the form $Gdh$ into a large complex-valued 
$n$-tuple $V_{Gdh}$, and we collect
all of the differences in positions of 
corresponding parallel sides for the developed image
of the form $G^{-1}dh$ into a large 
complex-valued n-tuple $V_{G^{-1}dh}$, then these
two complex-valued vectors  $V_{Gdh}$ and $V_{G^{-1}dh}$ should
be conjugate. Thus, we translate the ``period problem'' into
a statement about the Euclidean geometry of the developed
flat structures. This is done at the end of section \ref{sec3}.

The period problem \ref{tag21a} for the form $dh$ will be trivially
solved for the surfaces we treat here.

{\bf Step 4. Define the moduli space of pairs of conjugate flat
domains.}  Now we work backwards.  We know the general form of 
the developed images (called $\ogup$ and $\ogdn$, respectively)
of flat structures associated to the forms
$Gdh$ and $G^{-1}dh$, but in general, there are quite a few 
parameters of the flat structures left undetermined; this holds 
even after
we have assumed symmetries, determined the \wei divisor data 
for the models and used the period conditions \ref{tag21b} to 
restrict the relative Euclidean geometries of the pair
$\ogup$ and $\ogdn$. Thus, there is a moduli space $\Delta$ of 
possible candidates of pairs $\ogup$ and $\ogdn$: our 
period problem (condition \ref{tag21b}) is now a conformal problem
of finding such a pair which are conformally equivalent
by a map which preserves the corresponding cone points.
(Solving this problem means that there is a well-defined
Riemann surface which can be developed into $\BE^2$ in
two ways, so that the pair of pullbacks of the form $dz$ give
forms $Gdh$ and $G^{-1}dh$ with conjugate periods.)

The condition of conjugacy of the domains $\ogup$ and $\ogdn$
often dictates some restrictions on the moduli space, and even
a collection of geometrically defined coordinates.  We work these
out in section \ref{sec3}. 

{\bf Step 5. Solve the Conformal Problem using \tec theory.}
At this juncture, our minimal surface problem has become a 
problem in finding a special point in a product of moduli
spaces of complex domains: we will have no further references
to minimal surface theory. The plan is straightforward: we will
define a height function $\height: \Delta \lra \BR$
with the properties:

\begin{enumerate}
\item  (Reflexivity) The height $\height$ equals $0$ only at a solution
to the conformal problem 
\item  (Properness) The height $\height$ is proper on $\Delta$.
This ensures the existence of a critical point.
\item  (Non-degenerate Flow) If the height $\height$ at a pair $(\ogup, \ogdn)$
does not vanish, then the height $\height$ is not critical at 
that pair $(\ogup, \ogdn)$.
\end{enumerate}

This is clearly enough to solve the problem: we now sketch the
proofs of these steps.

{\bf Step 5a. Reflexivity.} We need conformal invariants of a 
domain that provide a complete set of invariants for Reflexivity,
have estimable asymptotics for Properness, and computable 
first derivatives (in moduli space) for the Non-degenerate Flow property.
One obvious choice is a set of functions of 
extremal lengths for a good choice
of curve systems, say $\Gamma = \{\gamma_1, \dots, \gamma_g\}$ 
on the domains.
These are defined for our examples in section \ref{sec41}.
We then define a height function $\height$ which vanishes only when there
is agreement between all of the extremal lengths
$\ext_{\ogup}(\gamma_i)= \ext_{\ogdn}(\gamma_i)$ and which 
blows up when $\ext_{\ogup}(\gamma_i)$ and $\ext_{\ogdn}(\gamma_i)$
either decay or blow up at different rates. See for example
Definition \ref{def412} and Lemma \ref{lemma435}.

{\bf Step 5b Properness.} Our height function will measure 
differences in the extremal lengths $\ext_{\ogup}(\gamma_i)$ and
$\ext_{\ogdn}(\gamma_i)$. A geometric
degeneration of the flat structure of either $\ogup$ or
$\ogdn$ will force one of the extremal lengths $\ext_{\bullet}(\gamma_i)$
to tend to zero or infinity, while the other 
extremal length stays finite and
bounded away from zero.  This is a straightforward situation where
it will be obvious that the height function will blow up.
A more subtle case arises when a geometric degeneration of the
flat structure forces {\it both} of the extremal lengths
$\ext_{\ogup}(\gamma_i)$ and $\ext_{\ogdn}(\gamma_i)$ to
{\it simultaneously} decay (or explode). In that case, we begin
by observing that there is a natural map between the
vector $<\ext_{\ogup}(\gamma_i)>$ and the vector
$<\ext_{\ogdn}(\gamma_i)>$. This pair of vectors is 
reminiscent of pairs of solutions to a hypergeometric
equation, and we show, by a monodromy argument analogous to
that used in the study of those equations, that it is not
possible for corresponding components of that vector to
vanish or blow up at identical rates. In particular, we show
that the logarithmic terms in the asymptotic expansion of the
extremal lengths near zero have a different sign, and this
sign difference forces a difference in the rates of decay
that is detected by the height function, forcing it to blow
up in this case.  The monodromy argument is given in section \ref{sec43},
and the properness discussion consumes section \ref{sec42}.

{\bf Step 5c. Non-degenerate Flow.} The domains $\ogup$ and $\ogdn$
have a remarkable property: if
$\ext_{\ogup}(\gamma_i) > \ext_{\ogdn}(\gamma_i)$, then when 
we deform $\ogup$ so as to decrease $\ext_{\ogup}(\gamma_i)$,
the conjugacy condition forces us to deform $\ogdn$ so as
to increase $\ext_{\ogdn}(\gamma_i)$.  We can thus always deform
$\ogup$ and $\ogdn$ so as to reduce one term of the height
function $\height$. We develop this step in Section \ref{sec5}.

{\bf Step 5d. Regeneration.} In the process described in 
the previous step, an issue arises: we might be able to 
reduce one term of the height function via a deformation, but
this might affect the other terms, so as to not provide
an overall decrease in height.  We thus seek a locus $\SY$ in
our moduli space where the height function has but a single
non-vanishing term, and all the other terms vanish to at least
second order.  If we can find such a locus $\SY$, we can flow along
that locus to a solution.  To begin our search for such a locus,
we observe which flat domains arise as limits of our
domains $\ogup$ and $\ogdn$: commonly, the degenerate domains 
are the flat domains for a similar minimal surface problem,
maybe of slightly lower genus or fewer ends.

We find our desired locus
by considering the boundary of the (closure) of the moduli
space $\Delta$: this boundary has strata of moduli spaces $\Delta'$
for minimal surface problems of lower complexity.  By induction,
there are solutions of those problems represented on such
a boundary strata $\Delta'$ (with all of the
corresponding extremal lengths in agreement), 
and we prove that there is a nearby
locus $\SY$ inside the larger moduli space $\Delta$ which 
has the analogues of those same extremal lengths in agreement.
As a corollary of that condition, the height function on $\SY$
has the desired simple properties.

\subsection {The Geometry of Orthodisks}
\label{sec24}

In this section we introduce the notion of orthodisks.

Consider the upper half plane $\BH$ and $n\ge3$ distinguished points
$t_i$ on the real line. The point $t_\infty=\infty$ will also
be a distinguished point. We will refer to the upper half plane together
with these data as a {\it conformal polygon}
and to the distinguished points as
{\it vertices}. Two conformal polygons are {\it conformally equivalent} if
there is
a biholomorphic map between the disks carrying vertices to vertices,
and fixing $\infty$.

Let
$a_i$ be some odd integers such that

\begin{equation}
\label{tag31}
a_\infty=-4-\sum_i a_i 
\end{equation}

By a Schwarz-Christoffel
map we mean the map
\begin{equation}
\label{tag32}
F:z \mapsto \int_i^z
(t-t_1)^{a_1/2}\cdot\ldots\cdot(t-t_n)^{a_n/2} dt 
\end{equation}

A point $t_i$ with $a_i>-2$ is called {\it finite}, otherwise {\it infinite}.
By Equation \ref{tag31}, there is at least one finite vertex.

\begin{definition}
\label{def241} Let $a_i$ be odd integers.
The pull-back of the flat metric on $\BC$ by $F$ defines a
complete flat metric with boundary on $\BH \cup \BR$  without the
infinite vertices. We call such a metric an {\it orthodisk}. The $a_i$ are
called the {\it vertex data} of the orthodisk. The {\it edges} of an orthodisk
are the boundary segments between vertices; they come in a natural order.
Consecutive edges meet orthogonally at the finite vertices.
Every other edge is parallel
under the parallelism induced by the flat metric of the
orthodisk. Oriented distances between parallel edges are called {\it periods}.
We will discuss the relationship of these periods to the periods arising in the minimal surface context
in Section \ref{sec3}.

The periods can have
$4$ different signs:
$+1,-1,+i,-i$.
\end{definition}

The interplay between these signs is crucial to our monodromy
argument, especially Lemma \ref{lemma435}.

\begin{remark} The integer $a_i$ corresponds to an angle $(a_i+2)\pi/2$
of the orthodisk. Negative angles are meaningful because 
a vertex (with a negative angle $-\theta$) lies at infinity
and is the intersection of a pair of lines which also
intersect at a finite point, where they make a {\it positive} angle of
$+\theta$.
\end{remark}

In all the drawings of the orthodisks to follow, we mean the domain to be
to the left
of the boundary, where we orient the boundary by the order of the
points $t_i$.

\subsection {Scherk's and Karcher's Doubly-Periodic Surfaces}
\label{sec25}

The singly-  and doubly-periodic Scherk surfaces are conjugate spherical
minimal surfaces whose \wei data lead to no computational
difficulties and whose orthodisk description illustrates
the basic concepts in an ideal way.

We discuss first the \wei representation of  
the doubly-periodic Scherk surfaces $S_0(\theta)$
(see figure \ref{fig:scherk1}).

$$
G(z) = z
$$
and
$$
dh = z^{-1} \frac{i dz}{z^2+z^{-2}-2\cos{2\phi}}
$$ 
on the Riemann sphere punctured at the four points $q=\pm
e^{\pm i\phi}$.

\begin{figure}[h] 
\centering
\includegraphics[width=2in]{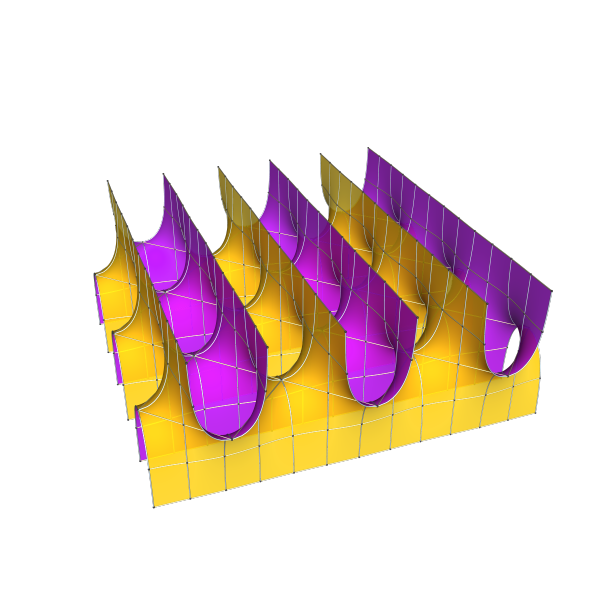} 
\caption{Scherk's surface}
\label{fig:scherk1}
\end{figure}

The residues of $dh$ at the punctures have the real values
$\frac{\pm 1}{4 \sin{2\phi}}$ and hence we have no vertical
periods. At the punctures (which correspond to the ends) the
\ga map is horizontal and takes the values $\pm e^{\pm
i\phi}$ so that the angle between two ends is $2\phi$. 
Because of this, the horizontal surface periods around a
puncture $q$ are given as complex numbers by
$$
\aligned
\re &\oint_q \frac i2\left(G + \frac1G\right) dh +i
\re\oint_q\frac 12\left(\frac1G-G\right) dh =\\
&= \re\left(\pi i^2\left(G(q)+\overline{G(q)}\right) \res_q
dh\right) - i \re\left(\pi
i \left(G(q)-\overline{G(q)}\right) \res_q dh\right) \\
&= -2\pi \overline{G(q)} \res_q dh.
\endaligned
$$
These numbers span a horizontal lattice in $\BC$ so that the surface 
is indeed doubly-periodic. Indeed we can now regard the
result as being defined over the even squares of a sheared
checkerboard with vertices given by the period lattice.

Our principal interest in this paper will be with 
handle addition for the {\it orthogonal} Scherk surface $S_0=S_(\pi/2)$, i.e
the case where in the above we set $\phi = \pi/2$ to obtain
a period lattice which is a multiple of the Gaussian 
integers.

It would be interesting to {\it shear} these surfaces, as in the 
work of Ramos-Batista-Baginsky \cite{brb1} or Douglas \cite{dou1},
so that the periods span a non-orthogonal horizontal lattice,
or to add handles to a sheared surface $S_1(\theta)$.

\begin{figure}[h] 
\centering
\includegraphics[width=2in]{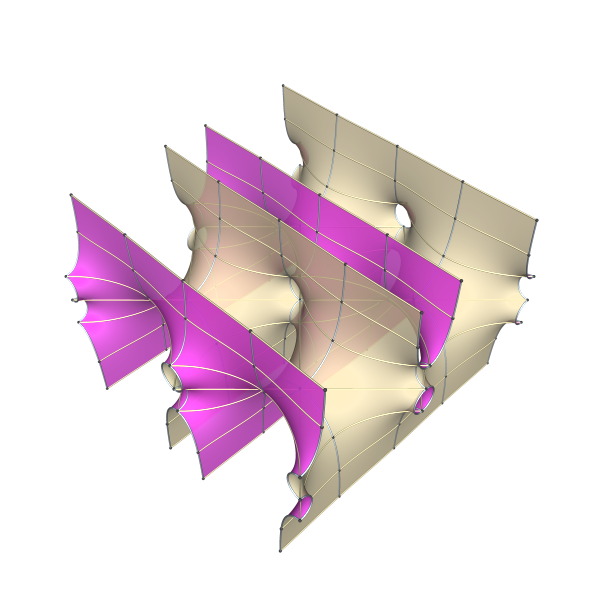} 
\caption{Scherk's surface with an additional handle}
\label{fig:scherk1(1)}
\end{figure}

The construction of $S_1$ is due to
Hermann Karcher \cite{ka4} who found a way to `add a handle' to
the classical Scherk surface. Here we mean that he found a doubly-periodic
minimal surface  whose fundamental domain is equivariantly
isotopic to a doubly-periodic surface formed by adding a
handle to the Scherk surface above.

We now reprove the
result of Karcher from the perspective of orthodisks.

The quotient surface of $S_1$  by its horizontal period lattice is a square
torus with four punctures corresponding to the ends. We will construct this surface
using the  \wei data given by figure \ref{fig:s1divs} below. We begin
with the figure on the far right. The points with
labels $1$ to $4$ are 2-division points on the torus and
correspond to the points with vertical normal on $S_1$. The points
$E_1, E_2$ and their symmetric counterparts (not labelled)
correspond to the ends. The point $E_1$ is placed on the straight
segment between $1$ and $2$, and its position is a free
parameter that will be used to solve the period problem. The
other poles are then determined by the reflective symmetries
of the square.

\begin{figure}[h] 
\centering
\includegraphics[width=4.5in]{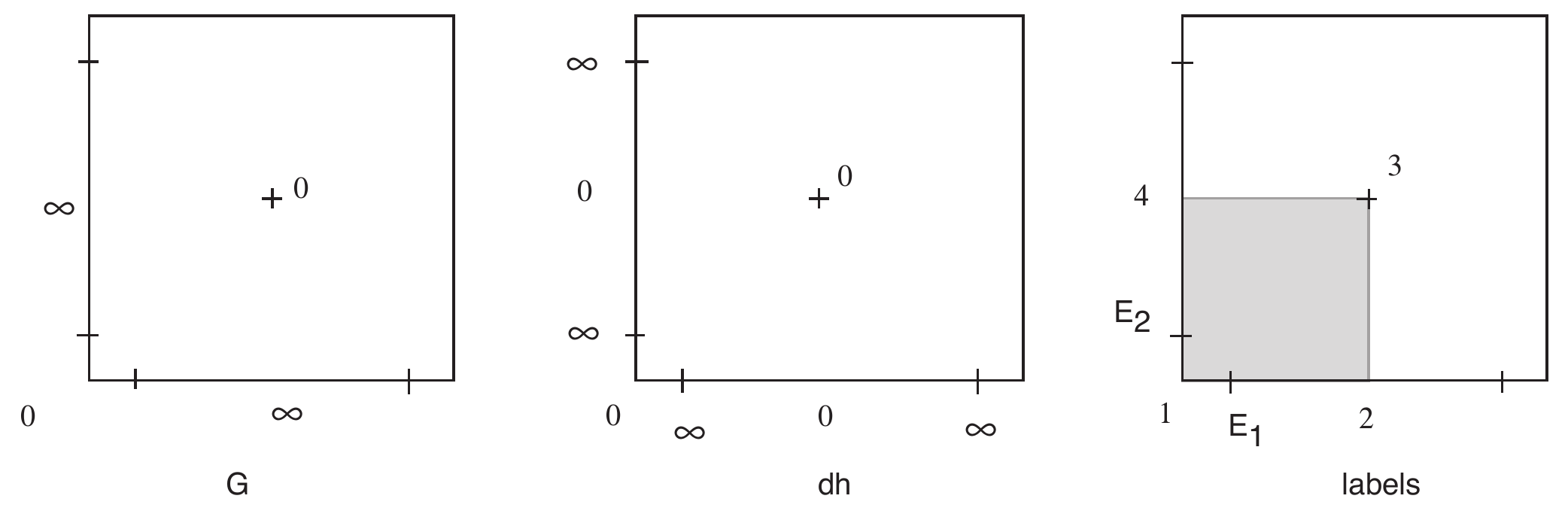} 
\caption{Divisors for the doubly-periodic Scherk surface with
handle}
\label{fig:s1divs}
\end{figure}

We obtain the  domains $\vert G dh\vert$ and $\vert\frac1G dh \vert$ by
developing {just} the
shaded square the regions given by figure \ref{fig:s1divs}.

\begin{figure}[h] 
\centering
\includegraphics[width=3.5in]{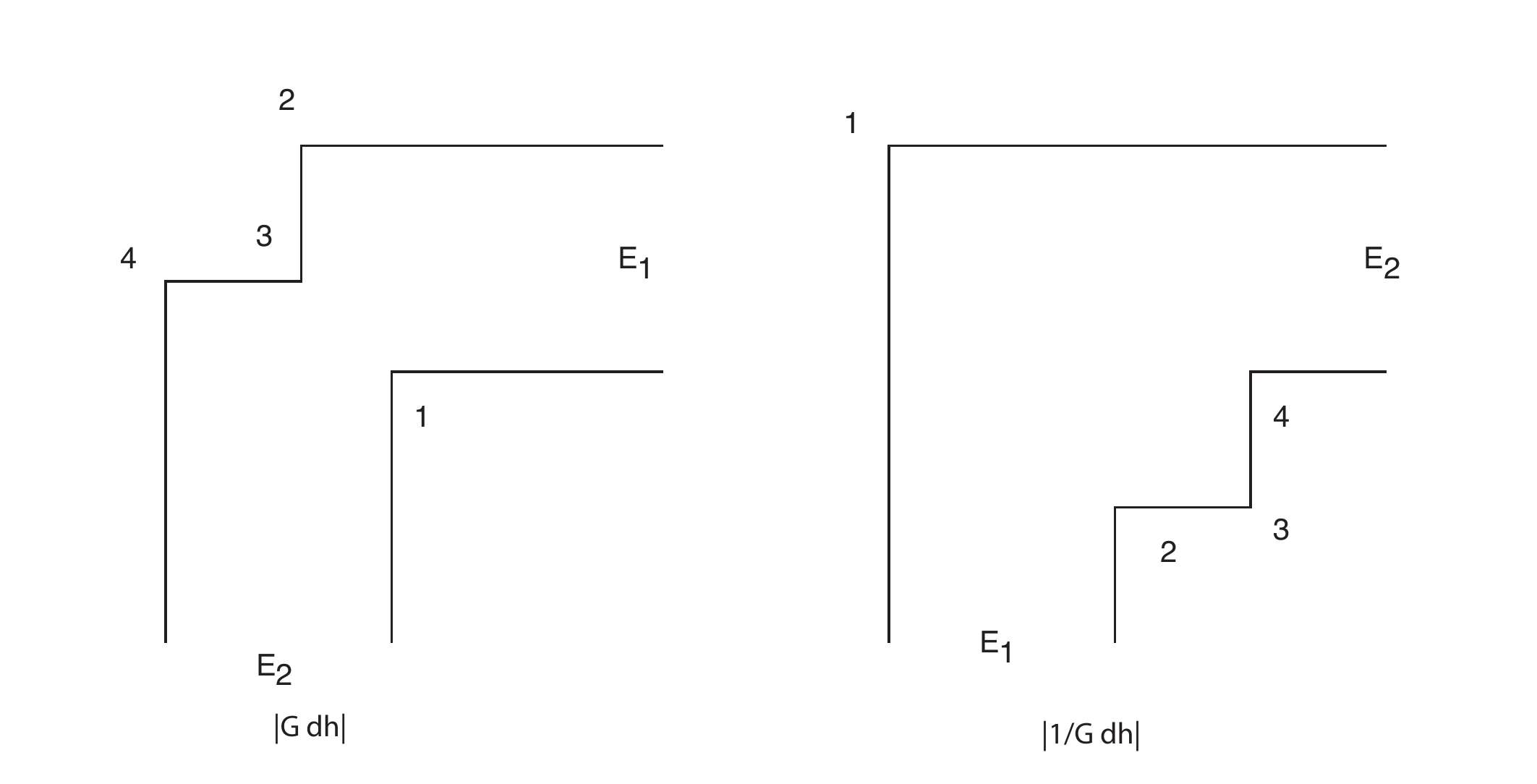} 
\caption{$\vert G dh \vert$ and $\vert \frac1G dh \vert$ orthodisks of a quarter piece}
\label{fig:s1gdh}
\end{figure}

As usual, the domains lie in separate planes and the
half-strips extend to infinity.

Observe that these domains are arranged to be symmetric with
respect to the $y=-x$ diagonal.

\begin{remark}  Four copies of the (say) $\vert G dh \vert$
domain fit together to form a  region as in Figure \ref{fig:fulls1} with
orthogonal half-strips, where a square from the center has
removed. This square (with opposite edges identified)
corresponds precisely to the added handle.

\begin{figure}[h] 
\centering
\includegraphics[width=3.5in]{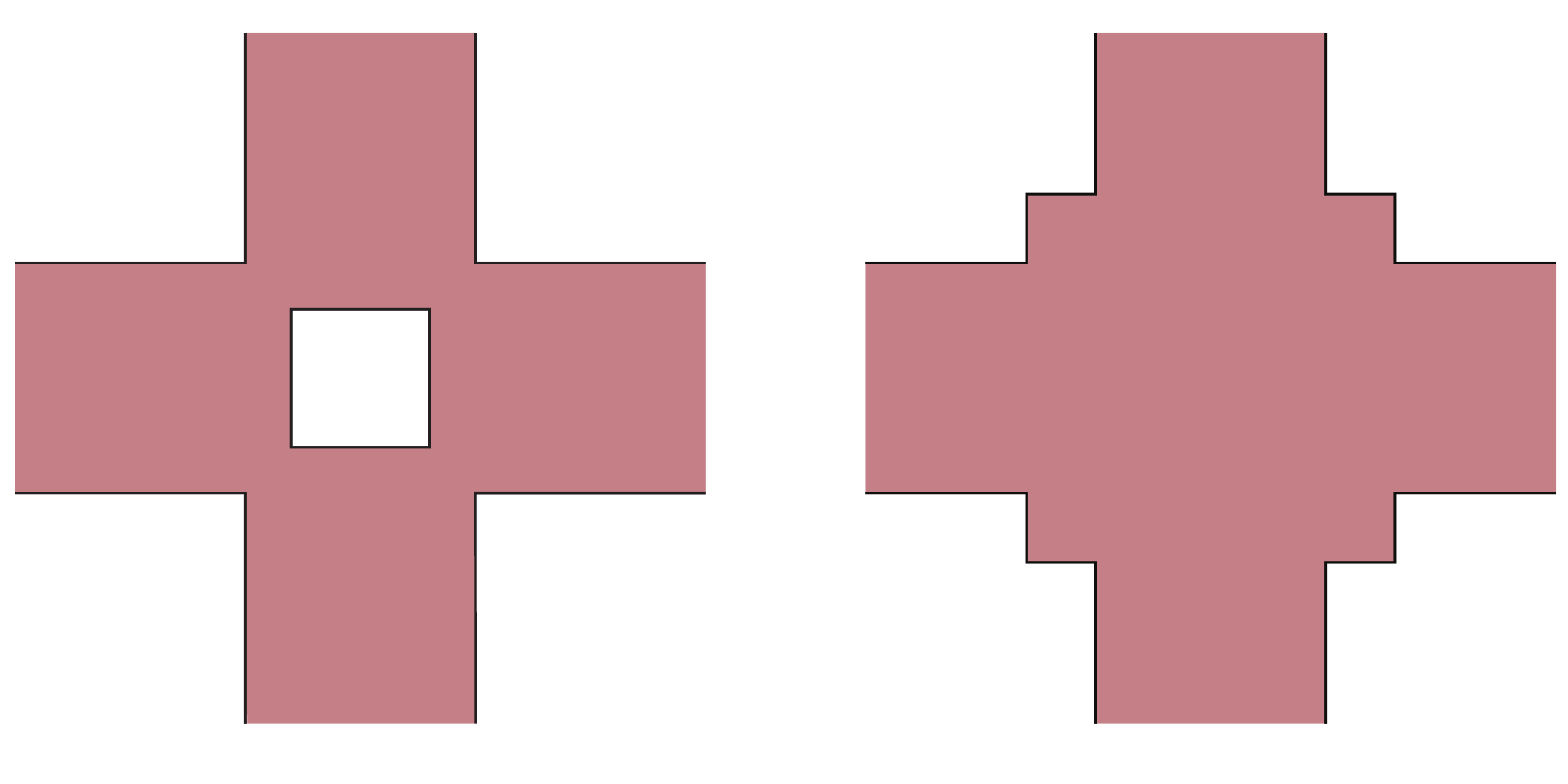} 
\caption{$\vert G dh \vert$ and $\vert \frac1G dh \vert$ orthodisks of a fundamental domain}
\label{fig:fulls1}
\end{figure}

\end{remark}

The period condition requires $G dh$ and $\frac1G dh$ to have
the same residues at $E_1$ and $E_2$ (so that the half-infinite strips need to
have the same width) and the remaining periods need to be
complex conjugate so that the $23$ and $34$ edges need to have
the same length in both domains. 

This is a 
one-dimensional period problem, and as is often the case, one
can solve it via an intermediate value argument. There are 
two versions of this argument, one approaching the problem
from the perspective of the period integrals on a fixed 
Riemann surface, and the other from the perspective of the
conformal moduli of the pair of orthodisks. We begin with the
version  based on the behavior of the period integrals in the
limit cases. We keep the discussion as close as possible to
the orthodisk description by  using Schwarz-Christoffel
maps from the upper half plane to parametrize the  domains
for $G dh$ and $\frac1G$:

\begin{align*}
f_{G dh}: z &\mapsto \int^z  {\frac{{\sqrt{r}}\,{\sqrt{-1 + {x^2}}}}
{{\sqrt{-1 + {r^2}}}\,{\sqrt{x}}\,
\left( -{r^2} + {x^2} \right) }} dx 
\\
f_{1/G dh}: z &\mapsto \int^z {\frac{{\sqrt{-1 + {r^2}}}\,{\sqrt{x}}}
{{\sqrt{r}}\,{\sqrt{-1 + {x^2}}}\,
\left( -{r^2} + {x^2} \right) }} dx
\end{align*}

Here the point $-1<-r<0<r<1<\infty$ on the real axis are
mapped to the labels $4,E_2,1, E_1, 2, 3$, respectively. The
normalization is choosen so that

$$
\res_{r} G dh = \frac1{2r}= \res_r \frac1G dh
$$

The parameter $r$ determines the relative position of $E_1$ and will now be
determined.
For this we compute the lengths of the edge $23$ as functions of $r$ using
hypergeometric functions as follows:

\begin{alignat*}{5}
A_{G dh} &=& \int_1^\infty  \frac1G dh &=&\sqrt\pi\frac{\sqrt r}{\sqrt{1-r^2}}
\frac{\Gamma(5/4)}{\Gamma(7/4)}F\left(\frac14,1,\frac74,r^2\right)
\\
A_{1/G dh} &=&\int_1^\infty  G dh &=& 2\sqrt\pi\frac{\sqrt{1-r^2}}{\sqrt r}
\frac{\Gamma(3/4)}{\Gamma(1/4)}F\left(\frac34,1,\frac54,r^2\right)
\end{alignat*}

The period condition requires
$$
A_{G dh}(r)=A_{1/G dh}(r) \ .
$$

To determine $r$, notice the `boundary conditions' 

\begin{align*}
A_{G dh}(0) &= \infty
\\
A_{1/G dh}(0) &= 0
\\
A_{G dh}(1) &= \frac\pi2
\\
A_{1/G dh}(1) &= \infty
\end{align*}
so that the intermediate value theorem implies the existence
of a solution.

Alternatively, we can give an 
intermediate value theorem argument based on
extremal length.  This is more in keeping with 
our theme of converting the period problem for 
minimal surfaces into a conformal problem for
orthodisks.

In terms of the orthodisks, the family of possible 
pairs $\{\ogup, \ogdn\}$ of orthodisks can be normalized
so that each half-strip end of either $\ogup$ or $\ogdn$
has width one. Then the family of pairs 
$\{\ogup, \ogdn\}$ is parametrized by the distance, say $d_{13}$,
between the points $1$ and $3$ in the domain $\ogup$:
there is degeneration in the domain $\ogup$ as
$d_{13} \lra 0$, and there is degeneration in the
{\it other} domain $\ogdn$ as $d_{13} \lra 2\sqrt 2$.

Consider the family $\SF$ of curves connecting 
the side $\overline{E_12}$ with the side 
$\overline{E_24}$.
Let us examine the extremal length of this 
family under the pair of limits. In the first case,
as $d_{13} \lra 0$, the two edges $\overline{E_24}$
and $\overline{E_12}$ are becoming disconnected in $\ogup$,
while the domain $\ogdn$ is converging to a non-degenerate
domain. Thus the extremal length of $\SF$ in 
$\ogup$ is becoming infinite, while the corresponding 
extremal length in $\ogdn$ is remaining finite
and positive.  The upshot is that near this limit 
point, $\ext_{\ogup}(\SF) > \ext_{\ogdn}(\SF)$.

Near the other endpoint, where $d_{13}$ is nearly $\sqrt 2$ in $\ogup$,
the opposite inequality holds. This claim
is a bit more subtle, as since the pair of segments
$\overline{23}$ and $\overline{34}$ are converging
to a single point, we see both extremal lengths 
$\ext_{\ogup}(\SF)$ and $\ext_{\ogdn}(\SF)$ are tending
to zero. Yet it is quite easy to compute that the
rates of vanishing are quite different, yielding
$\ext_{\ogup}(\SF) < \ext_{\ogdn}(\SF)$ near this
endpoint.
To see this let $\ogup(\e)$ and $\ogdn(\e)$ denote the
domains $\ogup$ (and $\ogdn$, respectively) with the lengths
of sides
$\ov{23}$ or $\ov{34}$ being $\e$. We are interested in the
Schwarz-Christoffel maps $F_{Gdh,\e}:\BH\to\ogup(\e)$ and
$F_{1/Gdh,\e}:\BH\to\ogdn(\e)$, and in particular at the
preimages $x_2(\e)$, $x_3(\e)$, $x_4(\e)$ (and $y_2(\e)$,
$y_3(\e)$, $y_4(\e)$, resp. ) under $F_{Gdh,\e}$ (and
$F_{1/Gdh}(\e)$, resp.) of the vertices marked $2$, $3$ and $4$. It
is straighforward to see that up to a factor that is bounded away
from both $0$ and $\infty$ as $\e\to0$, 
these positions are
given by the positions of the corresponding pre-images of the
simplified (and symmetric) maps
$$
F_{Gdh,\e}(z) = \int^z t^{-1}(t-x_2)^{-1/2}
(t-x_3)^{1/2} (t-x_4)^{-1/2} dt
$$
and
$$
F_{1/Gdh,\e} = \int^z t^{-1}(t-y_2)^{1/2}
(t-y_3)^{-1/2} (t-y_4)^{1/2} dt
$$

where $\{x_i\}$ and $\{y_i\}$ are bounded away from zero and
we have suppressed the dependence on $\e$ in the
expressions for the vertices. (We can ignore the factor
because we can, for example, normalize the positions of the points
so the the distance $x_4 - x_3$ is the only free parameter.
Once that is done, the factor is determined 
by an integral of a path beginning in the interval 
$[x_4, E_2]$ to a point in the interval $[x_1, E_1]$; in this
situation, both the length of the path and the integrand are bounded 
away from both zero and infinity, proving the assertion.) 
Thus we may compute the asymptotics by setting

\begin{align*}
\e  &= \int^{x_4}_{x_3}\frac1t \sqrt{\f{t-x_3}{(t-x_2)(t-x_4)}}dt\\
&\asymp (x_4 - x_3)^{1/2}\int^1_0 \sqrt{\f s{(s+1)(s-1)}}ds
\end{align*}
using that $s=\f{t-x_3}{x_4-x_3}$, that we have bounded $X_2, x_3,
x_4$ away from zero, and that we have assumed the symmetry
$x_4-x_3=x_3-x_2$.

Thus $x_4(\e)-x_3(\e)=0(\e^2)$.

A similar formal substitution into the integral expression
for $F_{1/Gdh}(\e)$, again using the symmetry of the domain,
yields that

\begin{align*}
\e  &= \int^{y_4}_{y_3}\frac1t \sqrt{\f{(t-y_2)(t-y_4)}{(t-y_3)}}dt\\
&\asymp (y_4 - y_3)^{3/2}\int^1_0\sqrt{\f{(s-1)(s+1)}s}ds
\end{align*}
so $(y_4-y_3)=0(\e^{2/3})$.

As the extremal length $\ext_{\ogup}(\SF)$ and
$\ext_{\ogdn}(\SF)$ of $\SF$ are given by the extremal
lengths in $\BH$ for a family of arcs between intervals that
surround $x_2$, $x_3$, and $x_4$ (or $y_2$, $y_3$, and $y_4$)
in $\BH$, and those extremal lengths are monotone increasing
in the length of the excluded interval $\ov{x_2x_4}$ (or
$\ov{y_2y_4}$, we see from the displayed formulae above  and 
that $\e^{2/3}>\e^2$ for $\e$ small
implies that
\begin{equation}
\label{eqn:genus1}
\ext_{\ogup}(\SF) < \ext_{\ogdn}(\SF)
\end{equation}
for $\e$ small, as desired.

\section{Orthodisks for the Scherk family}
\label{sec3}

In this section, we begin our proof of the existence of
the surfaces $S_g$, the doubly-periodic Scherk surfaces
with $g$ handles. We begin by deciding on the 
form of the relevant orthodisks; our plan is to
adduce these orthodisks from the orthodisks for the
classical Scherk surface $S_0$ and the Karcher
surface $S_1$. It will then turn out that these
orthodisks are quite similar to the orthodisks
we used in \cite{ww1} to prove the
existence of the surfaces $E_g$ of genus
$g$ with one Enneper-like end.

In this section we will introduce 
pairs of orthodisks
and
outline the existence proof for the $S_g$ surfaces,
using the $E_g$ surfaces as the model case.

The existence proof consists of several steps.
The first is to set up a space 
$\Delta=\Delta_g$ of {\it geometric coordinates}
such that each point in this space gives rise to a pair of 
conjugate orthodisks as described in section \ref{sec2}. 

Given such a pair, one canonically obtains a pair of
marked Riemann
surfaces with meromorphic $1$-forms  having complex conjugate periods. If
the surfaces
were conformally equivalent, these two $1$-forms would serve as the
$1$-forms $G dh$ and $\frac1G dh$ in the \wei representation. 

After that, it remains to find a point in the geometric coordinate space so
that the two surfaces are
indeed conformal. To achieve this, 
a nonnegative {\it height function} $\height$
is constructed on the coordinate space with the following 
properties:
\begin{enumerate}
\item  $\height$ is proper;
\item $\height=0$ implies that the two surfaces are conformal;
\item Given a surface $S_{g-1}$,
there is a smooth locus $\SY$ which lies properly in the $S_g$ coordinate
space $\Delta_g$ whose closure contains $S_{g-1} \in \p\overline{\Delta_g}$.  
On that locus $\SY \subset \Delta_g$, if $d \height=0$, then actually $\height=0$.
\end{enumerate}

The height should be considered as some adapted measurement of
the conformal distance between the two surfaces. Hence it is natural to 
construct such a function using conformal invariants. 
We have chosen to build an expression using the extremal lengths of
suitable cycles.

The first condition on the height poses a severe restriction on the choice
of the geometric coordinate system: 
The extremal length of a cycle becomes zero or infinite only if the surface
develops a node near that cycle. 
Hence we must at least ensure
that when reaching the boundary 
$\p\overline{\Delta_g}$ of the geometric coordinate domain $\Delta_g$, 
{\it at least one}  of the two surfaces degenerates {\it conformally}.

This condition is called {\it completeness} of the geometric 
coordinate domain $\Delta_g$.

Fortunately, we can use the definition of the geometric coordinates for
$E_g$ to derive
complete geometric coordinates for $S_g$.

We recall the geometric coordinates that we used in \cite{ww1} to
prove the existence of the Enneper-ended surfaces $E_g$. There,
both domains $\ogup$ and $\ogdn$ were bounded by staircase-like
objects we referred to as 'zigzags': in particular, 
the boundary of a domain was a properly embedded 
arc, which alternated between ($g+1$)
purely vertical segments and
($g+1$) purely horizontal segments and was symmetric across a 
diagonal line. Any such boundary is determined up to
translation by the lengths of its initial $g$ finite-length
sides, and up to homothety by any subset of those of 
size $g-1$. Thus, the geometric coordinates we used for such a 
domain $\ogup$ or $\ogdn$ were the lengths of the first $g-1$
sides. These coordinates are obviously complete.

\begin{remark} We were fortunate in \cite{ww1}, as we will
be in the present case, to be able to restrict our attention
to  orthodisks which embed in the plane.  For orthodisk
systems that branch over the plane (see \cite{ww2})
or are not planar (see \cite{dou1}), the description of the
geometric coordinates can be quite subtle.
\end{remark}

Recall that the orthodisks for the Chen-Gackstatter surfaces of 
higher genus were obtained
by taking the negative $y$-axis and the positive $x$-axis and 
replacing the subarc from $(0,-a)$ to $(a,0)$
by  a monotone arc consisting of horizontal and vertical segments
which were  symmetric with respect to the 
diagonal $y=-x$.
The two regions separated by this `zigzag' constituted a pair of
orthodisks. 
The geometric coordinates were given by the
edge lengths of the finite segments above the diagonal $y=-x$.

For our new surfaces, we continue the above construction as
follows. Denote the vertex of the new subarc that meets the diagonal by $(c,-c)$. Choose $b>c$. We then 
intersect the upper left region 
with the half planes $\{x>-b\}$ and $\{y<b\}$. Similarly, we intersect
the lower right region with the half planes $\{x<b\}$ 
and $\{y>-b\}$. This procedure defines two domains which we denote  by $\ogup$ and $\ogdn$. We use the
convention that $\ogup$ is the domain where the vertex $(c,-c)$ makes a $3\pi/2$ angle.

As geometric coordinates for this pair of orthodisks we take the edge lengths as
before and in addition the width $b$ of the half-infinite  vertical
and horizontal strips.

\begin{theorem}
\label{thm:complete} This coordinate system for $S_g$ is complete.
\end{theorem}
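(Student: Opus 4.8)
The plan is to show that any sequence of parameter values approaching the boundary $\p\ov{\Delta_g}$ forces a conformal degeneration of at least one of the two orthodisks $\ogup$ or $\ogdn$. Since the geometric coordinates for $S_g$ are the $g-1$ (say) edge lengths of the finite zigzag segments above the diagonal $y=-x$ together with the strip width $b$ (with $b>c$, where $(c,-c)$ is the diagonal vertex), approaching the boundary means that one of the following occurs: (i) one of the finite edge lengths tends to $0$; (ii) one of the finite edge lengths tends to $\infty$; (iii) $b\to c$ (the strips collapse onto the zigzag corner); or (iv) $b\to\infty$ (the strips become infinitely wide). I would treat each case by exhibiting a curve family (or a measured foliation) whose extremal length in one of the two orthodisks tends to $0$ or $\infty$, and then invoke the fact recalled in Section~\ref{sec2} --- that a curve's extremal length degenerates exactly when the surface develops a node near that curve --- to conclude the conformal degeneration. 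This is precisely the completeness condition, so establishing all four cases proves the theorem.

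The first step is to record, from the Schwarz--Christoffel picture, which orthodisk ``sees'' each degeneration: because $\ogup$ and $\ogdn$ are reciprocal in the sense that the vertex data $a_i$ for one are the negatives of those for the other (the vertex $(c,-c)$ making a $3\pi/2$ angle in $\ogup$ and thus a $\pi/2$ angle in $\ogdn$), a finite edge of length $\ell$ in $\ogup$ corresponds to a finite edge of comparable behavior in $\ogdn$, so a vanishing edge length in one domain is a vanishing edge length in the other. The key observation is that in the zigzag domain, when the length of a finite edge $e$ (bounded by two consecutive right-angle vertices $p$ and $q$) tends to $0$, the two edges adjacent to $e$ --- which are parallel --- come together, and the curve family $\SF_e$ connecting the two boundary arcs separated by the pair $\{p,q\}$ has extremal length tending to $0$ in that domain while, since the remaining coordinates are bounded, the conjugate domain stays non-degenerate and its extremal length of $\SF_e$ stays bounded away from $0$ and $\infty$; dually, when an edge length tends to $\infty$ the same family has extremal length tending to $\infty$. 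This is exactly the mechanism illustrated for the genus-one case around equation~\eqref{eqn:genus1}, and I would cite that computation (and the analogous one in \cite{ww1}) as the model. For cases (iii) and (iv) I would use the curve family $\SF_{\mathrm{strip}}$ that crosses a half-infinite strip transversally: as $b\to\infty$ this crossing family has extremal length $\to\infty$ in the domain whose strip is widening (equivalently, the conformal modulus of the complementary region collapses), and as $b\to c$ a family separating the strip mouth from the zigzag corner degenerates --- in each case one domain is pinched while the other is not (since in the conjugate domain the corresponding residue/width is fixed by the period-matching normalization, or the other coordinates are held bounded).

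The organizing principle throughout is that the geometric coordinates are literally lengths of Euclidean edges of embedded planar domains, so their degenerations are transparent; the only thing to check is that each such degeneration is a \emph{conformal} degeneration of one of the two orthodisks, and for planar zigzag-type domains this follows from the standard fact that extremal length of a separating or connecting curve family goes to $0$ or $\infty$ precisely when a collar pinches. The main obstacle I anticipate is bookkeeping rather than conceptual: one must verify that in each boundary case the ``other'' orthodisk genuinely stays non-degenerate --- i.e., that a degeneration of one coordinate does not secretly force a simultaneous degeneration of the conjugate domain along the same curve --- and that no boundary configuration of $\ov{\Delta_g}$ has been overlooked (in particular that the strip-width coordinate $b$ cannot interact with the edge-length coordinates to produce a boundary point where \emph{both} domains remain conformally non-degenerate). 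Since $\ogup$ and $\ogdn$ are reciprocal, a genuine simultaneous degeneration along one curve would require both a length and its reciprocal to blow up, which cannot happen for a fixed normalization; making this precise, case by case, is the heart of the argument, and it closely parallels the completeness verification in \cite{ww1}, which I would follow.
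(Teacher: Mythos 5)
Your proposal is correct and follows essentially the same route as the paper's (very terse) proof: enumerate the ways the geometric coordinates can reach $\p\ov{\Delta_g}$ and exhibit, for each, a curve family whose extremal length degenerates in at least one orthodisk --- the paper dispatches the finite-edge case in one line and handles $b-c\to 0$ exactly as you do, via the family joining $\ov{E_2P_0}$ to $\ov{P_{2g}E_1}$. One caution: your closing claim that the conjugate domain must stay non-degenerate (and that simultaneous degeneration ``cannot happen'') is both unnecessary for completeness, which only requires that \emph{at least one} domain degenerate conformally, and false in general --- the paper stresses in Section~\ref{sec41} that both orthodisks can degenerate simultaneously along the same cycle, which is exactly why the properness argument needs the monodromy/rate comparison rather than the dichotomy you describe.
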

\begin{proof}
Certainly if one of the finite edges degenerates, the conformal 
structure also leaves all compact sets in its moduli space.
Next, if the geometric coordinate $b-c$ tends to 0, the two vertices on the diagonal $y=-x$ coalesce, so that the 
extremal length of the arc connecting
$P_0E_2$ to $E_1P_{2g}$ tends to $\infty$, and so the surface has 
also degenerated.
\end{proof}

Why should such an orthodisk system correspond 
to a doubly-periodic minimal surface of genus $g$? Here we are
both generalizing the intuition given by Karcher's surface,
or alternatively relying on numerical simulation 
(see Figure \ref{fig:genus4}).  Either way, we can conjecture the divisor
data for a fundamental (and planar) piece of the surface 
$S_g$, and use this to define the orthodisk of the surface,
hence the developed image of a fundamental piece.

To formalize the discussion, we introduce:

\begin{definition}
\label{def:reflexive}
A pair of orthodisks is called {\em reflexive} if there is a vertex-
and label-preserving holomorphic map between them.
\end{definition}

Then we have:

\begin{theorem}
Given a reflexive pair of orthodisks of genus $g$, there is a 
doubly-periodic minimal surface 
of genus $g$ in
$T\times \BE$ with two orthogonal top and two bottom ends. 
\end{theorem}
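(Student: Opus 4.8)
The plan is to reverse the construction sketched in Steps 2--4 of Section \ref{sec23}: starting from a reflexive pair of orthodisks $(\ogup,\ogdn)$, we reconstruct the Weierstrass data $(\SR, G, dh)$ and verify that the Weierstrass representation produces an immersed minimal surface with the stated periodicity and end behavior. First I would use reflexivity: by Definition \ref{def:reflexive} there is a vertex- and label-preserving biholomorphism identifying the conformal structures underlying $\ogup$ and $\ogdn$, so we obtain a single Riemann surface $\SR_0$ (a sphere-with-boundary, namely $\BH$ with its marked vertices) carrying two Schwarz--Christoffel maps; pulling back $dz$ under each gives meromorphic one-forms which we name $Gdh$ and $G^{-1}dh$ on $\SR_0$. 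Taking their product defines $(dh)^2$ hence $dh$ up to sign, and their quotient defines $G^2$ hence $G$ up to sign; the sign ambiguities are resolved by the labeling data (the vertex data $a_i$ prescribe the orders of the zeros and poles of $G$ and $dh$, cf. Step 2). One then doubles $\SR_0$ across its boundary --- using the Schwarz reflection dictated by the orthodisk's boundary edges, which by Theorem \ref{sec211}'s correspondence are straight lines on the minimal surface --- to obtain the closed (quotient) Riemann surface $\SR$ of genus $g$, together with the extended forms $Gdh$, $G^{-1}dh$, $dh$. Here the bookkeeping of Section \ref{sec3} (the divisor data, the zigzag-plus-strips shape, the count of finite edges versus genus) is what guarantees the genus is exactly $g$ and that there are four vertical ends in two orthogonal pairs, the ends being the images of the half-infinite strips.

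Next I would check the period problem. The vertical period condition \eqref{tag21a} is, as remarked at the end of Step 3, trivial here: $dh$ has real residues at the ends by the symmetry of the construction (mirrored from the $\phi=\pi/2$ Scherk data and the Karcher $S_1$ data in Section \ref{sec25}), and is exact on the doubled compact part. For the horizontal periods, formula \eqref{tag21b} demands $\int_\g Gdh = \overline{\int_\g G^{-1}dh}$ for the cycles $\g$ that are required to close up, and membership in a rank-two orthogonal translation lattice for the remaining cycles. This is exactly where reflexivity does the work: the developed images of $Gdh$ and $G^{-1}dh$ are the orthodisks $\ogup$ and $\ogdn$, so the periods of these forms are the oriented distances between parallel edges --- the quantities called \emph{periods} in Definition \ref{def241}, taking values in $\{\pm1,\pm i\}\cdot\BR$. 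Because $\ogup$ and $\ogdn$ are conformally identified by a label-preserving map, corresponding edges correspond, and one reads off directly that $V_{Gdh}$ and $V_{G^{-1}dh}$ are complex conjugate in the sense of Step 3; the entries corresponding to the strip widths give equal (real) residues at the ends, hence, via the computation reproduced in Section \ref{sec25}, horizontal period vectors $-2\pi\overline{G(q)}\,\res_q dh$ at the four ends that span an orthogonal horizontal lattice $\Lambda_g$ (orthogonality because the two Gauss-map values are $\pm 1$, a multiple of the Gaussian integers, exactly as for $S_0$). The cycles around handles contribute periods that vanish by the conjugacy relation. Thus the map $z\mapsto \re\int^z(\om_1,\om_2,\om_3)$ descends to a well-defined map of $\SR$ (minus the ends) into $T\times\BE$, where $T=\BC/\Lambda_g$.

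Finally I would confirm that this map is a minimal immersion: it is harmonic and conformal away from the branch/cone points by the general Weierstrass representation, and it is unbranched precisely because at each cone point of the orthodisks the vertex data $a_i$ are chosen (odd integers, with the angle convention that $(c,-c)$ carries the $3\pi/2$ resp. $\pi/2$ angles) so that the zeros of $Gdh$ and of $G^{-1}dh$ never coincide --- equivalently $G$ has neither a zero nor a pole where $dh$ does, so the induced metric $ds=\tfrac12(|G|+|G|^{-1})|dh|$ has no zeros. One then checks completeness of the metric at the ends: near each puncture $G$ is finite and nonzero (the ends are "horizontal" in the terminology of Section \ref{sec25}) while $dh$ has a simple pole, so $ds$ blows up like $|dz|/|z|$ and the end is a complete annular end asymptotic to a vertical half-plane, giving the asserted Scherk-type ends. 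The main obstacle, and the step deserving the most care, is the divisor/genus bookkeeping in the doubling: one must verify that the particular zigzag-with-strips orthodisk shape with its prescribed vertex data yields, after Schwarz-doubling, a compact surface of genus \emph{exactly} $g$ with \emph{exactly} four ends in two orthogonal pairs, and that the two cone-point divisors are disjoint so the immersion is genuinely unbranched --- everything else is a transcription of the classical Weierstrass machinery recalled in Section \ref{sec2} together with the orthodisk/period dictionary of Definition \ref{def241}.
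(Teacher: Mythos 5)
Your overall strategy --- reconstruct $Gdh$ and $G^{-1}dh$ from the two developing maps of the reflexively identified orthodisks, recover $dh$ and $G$ from their product and quotient, read off the period conditions \eqref{tag21b} from the conjugacy of the developed images, and then verify regularity and completeness from the divisor data --- is the same as the paper's. But there is a concrete gap in the topological step. The orthodisk is a simply connected flat domain (conformally $\BH$ with marked boundary points), so a single Schwarz doubling across its boundary produces a \emph{sphere}, not a surface of genus $g$. The paper's construction is: double the orthodisk to get a sphere with marked points, and then take a \emph{double cover branched at the $2g+2$ finite vertices} ($P_0,\dots,P_{2g}$ and $V$); by Riemann--Hurwitz this hyperelliptic cover has genus exactly $g$, and it is on this cover that the cone angles $\pi/2$ and $3\pi/2$ become $2\pi$ and $6\pi$, yielding the stated divisors $(Gdh)=P_0^2P_2^2\cdots P_{2g}^2E_1^{-1}\cdots E_4^{-1}$, etc. You correctly identify the genus bookkeeping as the step deserving the most care, but the mechanism you propose (Schwarz doubling alone) cannot produce any genus at all; the branched double cover is the missing idea, and it is also what makes the forms single-valued (the cone metric on the cover has trivial holonomy, so its developing differential is a well-defined one-form).

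A secondary point: your claim that $dh$ is ``exact on the doubled compact part'' is not available on a genus $g$ surface --- $dh$ has nontrivial periods on handle cycles. The paper instead argues that since $dh$ has only simple zeros and poles its periods all share a phase, which a local computation pins down so that condition \eqref{tag21a} holds; the horizontal periods of the handle cycles are then killed by the conjugacy of the two orthodisks, and only the end cycles contribute, generating the orthogonal lattice $\Lambda_g$ exactly as in your computation. With the branched cover inserted and the vertical period argument repaired, your proof would align with the paper's.
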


\begin{proof}
We first construct the underlying Riemann surface by taking the
$\ogup$ orthodisk, doubling it along the boundary, and then taking a
double branched cover  of that, branched at the vertices. 
This gives us a Riemann surface $X_g$ of genus $g$.

That Riemann surface carries a natural cone metric 
induced by the flat metric of $\ogup$. As all identifications 
are done by parallel translations, 
this cone metric has trivial holonomy and hence the 
exterior derivative of its developing map defines a 
$1$-form which we call $G dh$. 
This $1$-form is well-defined, up to multiplication by a complex number.

By the reflexitivity condition, the very same Riemann surface $X_g$ 
carries another cone metric, being
induced from the  $\ogdn$ orthodisk and the canonical identification 
of the $\ogup$ and $\ogdn$  orthodisks
by a vertex-preserving conformal diffeomorphism. This second orthodisk 
defines a second $1$-form, denoted
by $\frac1G dh$, also well-defined only up to scaling.

The free scaling parameters are now fixed (up to an arbitrary real
scale 
factor which only affects the size
of the surface in $\BE^3$) so that the developed
$\ogup$ and
$\ogdn$ are truly complex conjugate if we use the same base point and 
base direction for the two developing
maps.

This way we have defined the \wei data $G$ and $dh$ on a Riemann surface $X_g$.

We show next that the resulting minimal surface has the desired 
geometric properties.
The cone points on $X_g$ come only from the orthodisk vertices: 
the finite vertices  $P_j$, being branch points, lift to 
a single cone point 
(also denoted $P_j$). The other finite cone
points $V_+$ and $V_-$ give also only one cone point on the surface, 
denoted by $V$. The half strips lead to
four cone points $E_i$. From the cone angles we can easily deduce the 
divisors of the induced $1$-forms as
\begin{align*}
(G dh)&= P_0^2 P_2^2 \cdots P_{2g}^2 E_1^{-1}E_2^{-1}E_3^{-1}E_4^{-1} \\
(\frac1G dh) &= P_1^2 P_3^2 \cdots P_{2g-1}^2 V^2 E_1^{-1}E_2^{-1}E_3^{-1}E_4^{-1} \\
(dh)&=P_0 \cdots P_{2g} V E_1^{-1}E_2^{-1}E_3^{-1}E_4^{-1}
\end{align*}

These data guarantee that the surface is properly immersed, 
without singularities, and complete.
The points $P_i$  and $V$ correspond to the points with vertical 
normal at the attached handles, while 
the $E_i$ correspond to the four ends.

As $dh$ has only simple zeroes and poles, its periods will all 
have the same phase, and using a local
coordinate it is easy to see that the periods must all be real. 

For the  cycles
in $\ogup$ and $\ogdn$ corresponding to finite edges, 
the conjugacy condition ensures that all of the 
periods are purely real. The cycles around the ends are
similarly conjugate by the construction of the orthodisks. 
The symmetry of the domain ensures that the ends are 
orthogonal.

\end{proof}

\section{Existence Proof: The Height Function}
\label{sec4}

\subsection{Definition and Reflexivity of the Height Function}
\label{sec41}

For a cycle $c$ connecting pairs of edges denote by 
$\ext_{\ogup}(c)$ and $\ext_{\ogdn}(c)$
the extremal lengths of the cycle in the $G dh$ and $\frac1G dh$
orthodisks, respectively. Recall that this makes sense
as we have a natural topological identification of these domains (up to
homotopy) mapping corresponding
vertices onto each other.

The height function on the space of geometric coordinates will
be a sum over several summands of the following type:

\begin{definition} Let $c$ be a cycle. Define
$$
\height(c)=\Big\vert e^{1/\ext_{\ogup}(c)}-e^{1/\ext_{\ogdn}(c)}
\Big\vert^       2+
\Big\vert e^{\ext_{\ogup}(c)}-e^{\ext_{\ogdn}(c)} \Big\vert^2
$$
\end{definition}

The rather complicated shape of this expression is required
to prove the properness of the height function: Because there
are sequences of points in the space of geometric coordinates
which converge to the boundary so that {\it both} orthodisks
degenerate for the same cycles, the above expression must be
very sensitive to different rates with which this happens.

Due to the Monodromy Theorem \ref{thm:mono}, it is
sometimes possible to detect such rate differences in the
growth of $\exp \frac1{\ext(c)}$ for degenerating cycles with
$\ext(c)\to 0$.

The assumptions of the Monodromy Theorem impose certain
restrictions on the choice of cycles for the height, and
there are further restrictions coming from the Regeneration
Lemma \ref{lemma50} below.

Before we introduce the cycles formally, we need to set some
notation. In the figure below, we have labelled the finite
vertices of the staircase for $S_g$ as 
$\{P_0, \dots, P_{2g}\}$, the end vertices as
$E_1$ and $E_2$, and the finite vertices on the outside
boundary components of $\ogup$ and $\ogdn$ as
$V_+$ and $V_-$, respectively. Note that 
in the combined Figure \ref{fig:geomcoord}, the vertices
$P_i$ proceed in a different order for the domain 
$\ogdn$ than they do for the domain $\ogup$.

\begin{figure}[h] 
\centering
\includegraphics[width=5in]{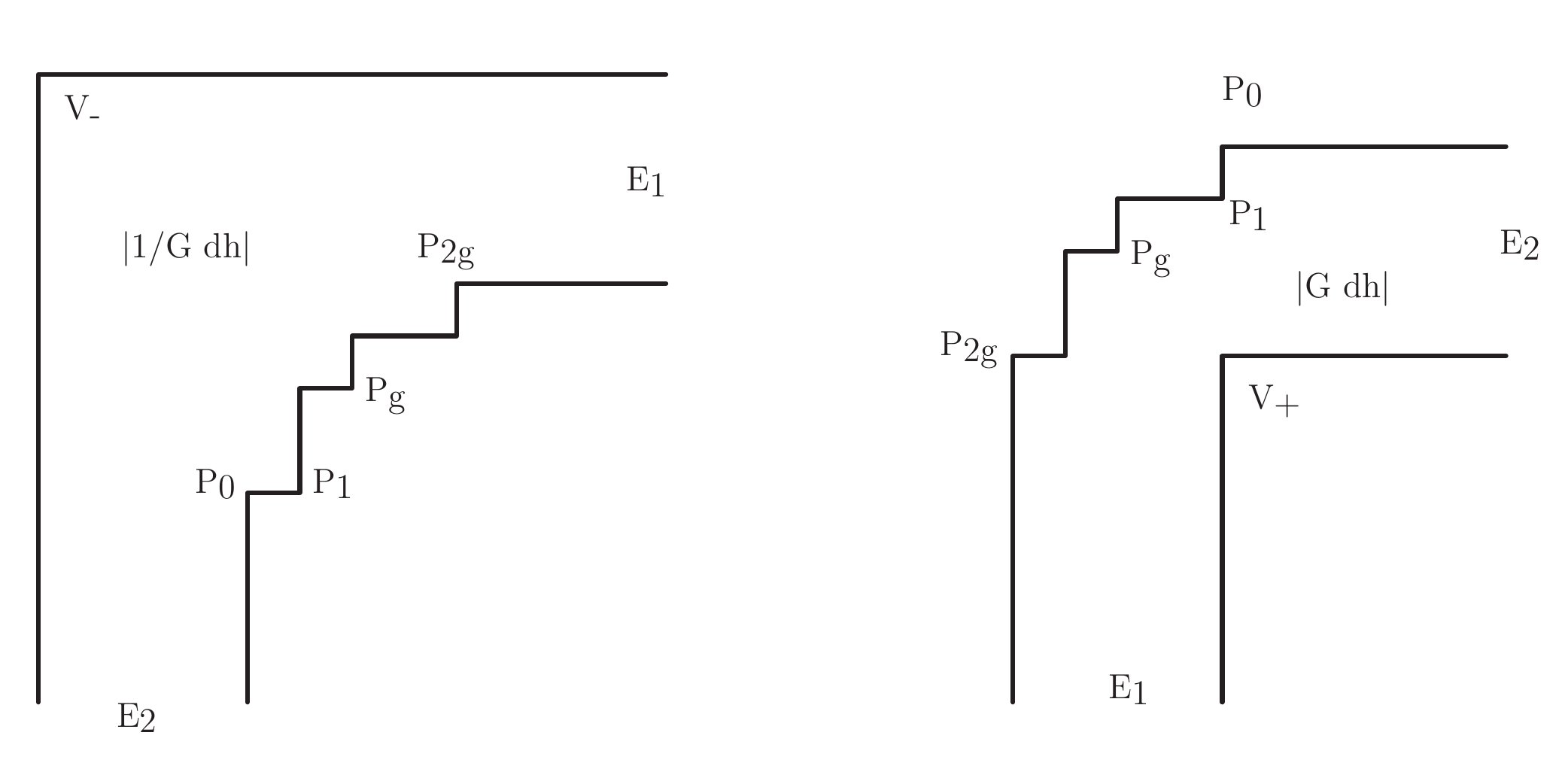} 
\caption{Geometric Coordinates}
\label{fig:geomcoord}
\end{figure}

At this point, there is a difficulty in keeping the
notation consistent; a consistent choice of orientation
of the \ga map $G$ results in the two regions
switching labels as we increase the genus by one; we will
circumvent that notational issue by requiring the \ga
map $G$ to have the orientation for odd genus
opposite to that which is has for even genus -- thus,
the angle at $P_g$ in $\ogup$ will always be $3\pi/2$,
independently of $g$. See Figure~\ref{fig:geomcoord}.

Now let's introduce the cycles formally.

Let $c_i$ denote the cycle in a domain which 
encircles the segment $\overline{P_{i-1}P_{i}}$;
here $i$ ranges from $1$ to $g-1$, and from 
$g+2$ to $2g$. In addition,
let $\delta$ connect the segment $\overline{E_2P_0}$
to the segment $\overline{P_{2g}E_1}$.
This last segment $\delta$ is loosely analogous 
in its design and purpose to the
arc we used in the second proof of the
existence of the Karcher surface $S_1$.

We group these cycles in pairs symmetric with respect to the 
$y=-x$ diagonal and also
require that the cycles are symmetric themselves:

To this end, set
$$
\gamma_i = c_{i} + c_{2g+1-i}, \qquad i=1,\dots,g-1.
$$

These cycles will detect degeneracies on the boundary with many
finite vertices, while $\delta$ detects degeneration of the pair
of boundaries in $\ogup$.

We next use these cycles to define a proper height function
on the moduli space $\Delta_g$ of pairs of orthodisks. Note that $\dim \Delta_g=g$,
so we are using $g$ cycles.

\begin{definition}
\label{def412} The height for the $E_g$ surface is 
defined as
$$
\height=\sum_{i=1}^{g-1} \height(\gamma_i) + 
\height(\delta)
$$
\end{definition}

\begin{lemma}
\label{lemma413} 
If $\height=0$, the two orthodisks are reflexive, i.e. there is a vertex preserving conformal map between them.
\end{lemma}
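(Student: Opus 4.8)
The plan is to read off from the vanishing of $\height$ enough equalities of extremal lengths to invoke the rigidity of flat structures. Concretely, $\height=0$ forces each summand $\height(\gamma_i)$ and $\height(\delta)$ to vanish, and each such summand is a sum of two nonnegative terms; hence for each of the $g$ cycles $\gamma_1,\dots,\gamma_{g-1},\delta$ we obtain $\ext_{\ogup}(c)=\ext_{\ogdn}(c)$. So the content of the lemma is: \emph{agreement of the extremal lengths of these $g$ particular cycles on the two orthodisks implies the two orthodisks are conformally equivalent by a vertex- and label-preserving map.}

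The key steps I would carry out are as follows. First, observe that a point of $\Delta_g$ determines each orthodisk up to the $g$ geometric coordinates (the edge lengths above the diagonal together with the half-strip width $b$), and that the conjugacy/reflexivity of the pair is precisely a conformal-equivalence condition between the two; thus ``reflexive'' is a codimension condition cut out inside $\Delta_g$ and we must show the $g$ equations $\ext_{\ogup}(c_k)=\ext_{\ogdn}(c_k)$ cut it out. Second, I would set up a \emph{real-analytic period/extremal-length map} $\Phi:\Delta_g \to \BR^g$ sending a pair of orthodisks to the vector $\big(\ext_{\ogup}(\gamma_i)-\ext_{\ogdn}(\gamma_i)\big)_{i=1}^{g-1}$ together with $\ext_{\ogup}(\delta)-\ext_{\ogdn}(\delta)$ (using real analyticity of these functions on the moduli space, cf. Proposition \ref{prop:extanalytic}); the lemma asserts that the zero set of $\Phi$ is exactly the reflexive locus. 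Third, the ``hard'' direction (reflexive $\Rightarrow$ $\height=0$) is immediate: a vertex-preserving biholomorphism pulls back one orthodisk's conformal structure to the other's, so it preserves every extremal length, hence all summands vanish. For the ``easy-sounding but substantive'' direction ($\height=0 \Rightarrow$ reflexive), I would argue that the underlying Riemann surfaces $X_g$ built from $\ogup$ and $\ogdn$ (each doubled and double-branched-covered as in the previous proof) are marked by the same topological data, so both are points of a single \tec space of dimension $g$; the $g$ cycles are chosen (this is where the earlier remarks about the Monodromy Theorem and Regeneration Lemma enter) so that the $g$ extremal-length functions $\ext(\gamma_i),\ext(\delta)$ form local coordinates on this \tec space near the relevant locus, and therefore agreement of all $g$ of them forces the two marked surfaces to coincide — which is the reflexivity statement.

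The main obstacle I expect is proving that the $g$ chosen extremal lengths actually separate points, i.e. that $\{\ext(\gamma_1),\dots,\ext(\gamma_{g-1}),\ext(\delta)\}$ is a (real-analytic) coordinate system on the $g$-dimensional moduli space rather than merely $g$ functions that happen to vanish on the reflexive locus. In practice I would reduce this to showing the differentials of these extremal-length functions are linearly independent at reflexive points: by Gardiner's formula \eqref{tag22}, $d\ext_{(\cd)}(\mu_i)$ is represented by the Hubbard--Masur quadratic differential $\Phi_{\mu_i}$ of the measured foliation dual to $\gamma_i$, and linear independence of these $\Phi_{\mu_i}$ in $\mathrm{QD}(X_g)$ (whose dimension matches $g$ here by the divisor computation) follows because the $\gamma_i$ and $\delta$ are chosen to be a "good" system — pairwise symmetric, filling up the surface, and compatible with the handle structure. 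Granting this (and the real analyticity from Proposition \ref{prop:extanalytic}), the lemma follows: $\height=0$ pins down all $g$ coordinates simultaneously on both sides, so $\ogup$ and $\ogdn$ are conformally the same with labels matched, i.e. reflexive.
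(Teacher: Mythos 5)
Your reduction is right and matches the paper's: $\height=0$ forces each summand to vanish, hence $\ext_{\ogup}(c)=\ext_{\ogdn}(c)$ for each of the $g$ cycles, and the lemma becomes the claim that this list of extremal lengths determines the marked conformal structure of the domain. (The ``hard direction'' you discuss, reflexive $\Rightarrow\height=0$, is not part of the lemma.) The gap is in how you establish that determination. You propose to show that the differentials $d\ext(\g_1),\dots,d\ext(\g_{g-1}),d\ext(\delta)$ are linearly independent, so that these functions form \emph{local} coordinates near the reflexive locus, ``and therefore agreement of all $g$ of them forces the two marked surfaces to coincide.'' That inference fails: local coordinates separate points only locally, whereas $\ogup$ and $\ogdn$ are two a priori unrelated points of the moduli space of marked planar domains — nothing places them in a common neighborhood where a local diffeomorphism would be injective. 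A map whose differential is everywhere invertible can still identify far-apart points, so linear independence of the $\Phi_{\mu_i}$ (even granting your unproved ``good system'' claim) does not yield the needed \emph{global} injectivity of the extremal-length map.

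The paper closes exactly this gap by a global monotonicity (shooting) argument rather than an infinitesimal one. Normalize $\ogup$ conformally to $\BH$ with $P_g\mapsto 0$, $V_+\mapsto\infty$, $E_1\mapsto 1$, $E_2\mapsto -1$, using the diagonal symmetry. Then $\ext_{\ogup}(\delta)$ is globally monotone in the position of $\tilde P_0=-\tilde P_{2g}$, so it pins down $\tilde P_0$ outright; next, for each trial value of $\tilde P_1$, the datum $\ext(\g_1)$ uniquely determines $\tilde P_2$, and inductively $\ext(\g_j)$ determines $\tilde P_{j+1}$ from $\tilde P_{j-1},\tilde P_j$, with the whole chain depending monotonically on the choice of $\tilde P_1$; the final datum $\ext(\g_{g-1})$ then fixes $\tilde P_1$ uniquely. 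This gives genuine global uniqueness of the boundary configuration, hence of the conformal structure, which is what the lemma needs. To repair your argument you would have to replace the linear-independence step by such a global injectivity statement (or otherwise prove the extremal-length map is proper and injective), not merely a local one.
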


\begin{proof}
Map the $\ogup$ orthodisk conformally to the upper 
half plane $\BH$ so that $P_g$ is mapped to $0$, and $V_+$ to
$\infty$. As the domain $\ogup$ is symmetric about 
a diagonal line connecting $P_g$ with $V_+$,
our mapping is equivariant
with respect to that symmetry and the
reflection in $\BH$ about the imaginary axis ---
in particular,  $E_1$ is taken
to $1$, while $E_2$ is taken to $-1$. 
The vertices $P_j, E_k, V_{\pm}$ 
are mapped to points $\tilde{P_j}, \tilde{E_k}, \tilde{V_{\pm}}
\in \BR$ and the cycles 
$\g_j$ are carried to
cycles in the upper half plane which are symmetric with respect to
reflection in the $y-$axis.

Now, note that if the height $\height$ vanishes, then so do
each of the terms $\height(\gamma_i)$ and $\height(\delta)$.
Thus the corresponding extremal lengths $\ext_{\ogup}(\G)$
and $\ext_{\ogdn}(\G)$ agree on the curves $\G = \delta, \g_1, \dots,
\g_{g-1}$. It is thus enough to show that that set 
$$\{\ext_{\ogup}(\g_1), \dots  
\ext_{\ogup}(\g_{g-1}), \ext_{\ogup}(\delta)\}$$
of extremal lengths determines the
conformal structure of $\ogup$, or equivalently in this
case of a planar domain $\ogup$, the positions of the
distinguished points
$\{\tilde{P_j}, \tilde{e_k}, \tilde{V}_{\pm}\}$ on the boundary of 
the image $\BH$.

Now, $\tilde{P_0} = -\tilde{P}_{2g}$, and as $\ext_{\ogup}(\delta)$
is monotone in the position of $\tilde{P}_0 = -\tilde{P}_{2g}$
(having fixed $\tilde{e}_1 =1 $ and $\tilde{e}_2
= -1$), we see that $\ext_{\ogup}(\delta)$ determines the position
of $\tilde{P}_0$ and  $\tilde{P}_{2g} = -\tilde{P_0}$.
Next we regard $\tilde{P}_1$ as a variable, with the 
positions of $\tilde{P}_2, \dots, \tilde{P}_{g-1}$ depending
on $\tilde{P}_1$.  The point is that any choice of $\tilde{P}_1$,
together with the datum $\ext_{\ogup}(\g_1)$ uniquely determines
a corresponding position of $\tilde{P}_2$; moreover, as our choice
of $\tilde{P}_1$ tends to $-1$, the correspondingly determined
$\tilde{P}_2$ also tends to $-1$, and as our choice
of $\tilde{P}_1$ tends to $0$, the correspondingly determined
$\tilde{P}_2$ pushes towards $0$. Thus since we know that there is at 
least one choice of points 
$\{\tilde{P}_j, \tilde{E}_k, \tilde{V}_{\pm}\}$ on the boundary of 
the image $\BH$ for which the extremal lengths will agree for
corresponding curve systems, we see there is a range of possible values
in $(-1,0)$ for the position of $\tilde{P}_1$, each uniquely
determining a position of $\tilde{P}_2$ in $(\tilde{P}_1,0)$.  Similarly,
for each of those values $\tilde{P}_1$ and $\tilde{P}_2$, the extremal 
length $\ext_{\ogup}(\g_2)$ uniquely determines a 
value for $\tilde{P}_3$ in $(\tilde{P}_2,0)$. We continue, inductively
using the positions of $\tilde{P}_{j-1}$ and $\tilde{P}_j$
and the datum $\ext_{\ogup}(\g_j)$ to determine $\tilde{P}_{j+1}$.
In the end, we have, for each choice of $\tilde{P}_1$,
a sequence of uniquely determined positions 
$\tilde{P}_2, \dots, \tilde{P}_{g-1}$, with the positions of 
all the determined points depending monotonically on the 
choice of $\tilde{P}_1$. Of course the positions of
$\tilde{P}_{g-3}$, $\tilde{P}_{g-2}$, $\tilde{P}_{g-1}$ and 
$\tilde{P}_g=0$ determine the value $\ext_{\ogup}(\g_{g-1})$,
which is part of the data.  By the monotonicity of the dependence
of the choice of positions $\tilde{P}_2, \dots, \tilde{P}_{g-1}$ 
on the choice of $\tilde{P}_1$, we see that the choice of
$\tilde{P}_1$, and hence all of the values, is uniquely 
determined.

Thus all of the distinguished points on the boundary
of $\BH$ are determined, hence so is the conformal
structure of $\ogup$.
\end{proof}

As we
clearly have that $\SH \ge 0$, we see that our task in the
next few sections is to find zeroes of $\SH$. This we accomplish,
in some sense, by flowing down $-\nabla \SH$ along a 
nice locus $\SY \subset \Delta_g$ avoiding both critical 
points and a neighborhood of $\p\overline{\Delta_g}$.

An essential property of the height is its analyticity:

\begin{proposition}\label{prop:extanalytic}
The height function is a real analytic function on $\Delta_g$.
\end{proposition}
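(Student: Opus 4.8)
The plan is to reduce the claim to two facts: first, that the distinguished points $\tilde P_j, \tilde E_k, \tilde V_\pm \in \partial\mathbb{H}$ associated to a pair of orthodisks depend real-analytically on the geometric coordinates in $\Delta_g$; and second, that for a fixed planar domain with distinguished boundary points, the extremal length of each of our cycle classes $\gamma_i, \delta$ is a real-analytic function of those points. Granting these, the height function $\height$ is built from the $\ext_{\ogup}(\cdot)$ and $\ext_{\ogdn}(\cdot)$ by composition with the entire function $x \mapsto e^{1/x} + \text{(conjugate terms)}$, sums, products, and the modulus squared of differences of real quantities; since $\ext > 0$ on all of $\Delta_g$ (no cycle degenerates in the interior, by completeness, Theorem~\ref{thm:complete}), the argument of $e^{1/x}$ stays bounded away from the singularity, and real-analyticity is preserved under all these operations. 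So $\height$ is real-analytic on $\Delta_g$.

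For the first fact: the orthodisks are cut out of the plane by finitely many horizontal and vertical lines whose positions are affine functions of the geometric coordinates (the finite edge lengths and the strip width $b$), so the domain $\ogup$, as a subset of $\mathbb{E}^2$ with marked vertices, varies real-analytically — indeed affinely — in these parameters. The Riemann map normalizing $\ogup$ to $\mathbb{H}$ with $P_g \mapsto 0$, $V_+\mapsto\infty$ (and by symmetry $E_1\mapsto 1$, $E_2\mapsto -1$) then depends real-analytically on the parameters: one way to see this is to observe that the developing map $F_{Gdh}$ is itself a Schwarz–Christoffel integral $\int^z \prod (t-t_j)^{a_j/2}\,dt$ whose exponents $a_j$ are fixed and whose prevertex positions $t_j$ are determined, via the edge-length and period data, by a system of equations (the Schwarz–Christoffel accessory-parameter equations together with our normalization) whose solutions depend real-analytically on the right-hand-side data — this is an implicit function theorem argument, using that the relevant Jacobian is nonsingular because the edge lengths are genuine local coordinates on the space of such polygons. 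Alternatively, one can cite the general fact that solutions of the Beltrami equation, hence Riemann maps onto analytically varying domains, depend real-analytically on parameters.

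For the second fact: once the marked points $\{\tilde P_j, \tilde E_k, \tilde V_\pm\}$ on $\partial\mathbb{H}$ are fixed, the extremal length of a cycle class (a family of arcs joining two prescribed boundary intervals, or a free homotopy class) in $\mathbb{H}$ minus those points is classically computed by a period of an abelian differential or, equivalently, realized by a Jenkins–Strebel quadratic differential whose extremal metric is explicit; in either description the extremal length is a real-analytic — in our genus-one-type cases, even a hypergeometric — function of the cross-ratios of the marked points, as the genus-one computation preceding Section~\ref{sec3} already illustrates. More robustly, Gardiner's formula $d\,\ext_{(\cdot)}(\mu)|_{[\SR]} = 2\Phi_\mu$ together with the holomorphic dependence of the Hubbard–Masur differential $\Phi_\mu$ on the conformal structure shows the extremal-length function is real-analytic on the finite-dimensional real-analytic family of conformal structures parametrized by $\Delta_g$.

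The main obstacle is the first fact — controlling the dependence of the Schwarz–Christoffel prevertices (equivalently, the normalizing Riemann map) on the geometric coordinates — and in particular verifying the non-degeneracy of the accessory-parameter system so that the implicit function theorem applies with real-analytic output. Once that is in hand, the rest is a routine composition argument.
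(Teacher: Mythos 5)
Your proposal is correct and follows essentially the same route as the paper: the paper's proof also applies the Schwarz--Christoffel formula twice, first to map the orthodisk to $\BH$ (your ``first fact,'' the analytic dependence of the prevertices on the geometric coordinates) and then to map $\BH$ to a rectangle whose modulus is the extremal length (your ``second fact''), after which the height is an analytic combination of these. Your write-up is in fact more explicit than the paper's about the one genuinely delicate point --- the nondegeneracy of the accessory-parameter system needed to get real-analytic dependence of the prevertices on the edge lengths --- which the paper's proof passes over without comment.
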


\begin{proof}
The height is an analytic expression in extremal lengths of cycles 
connecting edges of polygons.
That these are real analytic, follows by applying the
Schwarz-Christoffel formula twice: first to map the polygon conformally
to the upper half plane, 
and second to map the upper half plane to a rectangle so that the
edges the cycle 
connects become parallel edges of the rectangle. Then it follows that
the modulus of the rectangle 
depends real analytically on the geometric coordinates of the orthodisks. 
\end{proof}

\subsection {The properness of the height function}
\label{sec42}

\begin{theorem}
\label{thm421} 
The height function is proper on the
space of geometric coordinates. 
\end{theorem}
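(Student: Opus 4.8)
The plan is to show that if a sequence of geometric coordinates in $\Delta_g$ leaves every compact set, then $\height \to \infty$ along that sequence. Since $\Delta_g$ is parametrized by the finite edge lengths of the zigzag (above the diagonal) together with the half-strip width $b$, leaving compact sets means that either some finite edge length or the quantity $b-c$ tends to $0$ or $\infty$ (degenerations where a length $\to\infty$ while others stay bounded are handled by rescaling, since extremal length is scale invariant; so after normalizing one length, the effective degenerations are that a ratio of lengths goes to $0$, or $b-c \to 0$). By Theorem~\ref{thm:complete} (completeness), each such degeneration causes at least one of the two orthodisks to degenerate conformally, so at least one extremal length $\ext_{\ogup}(\gamma_i)$, $\ext_{\ogdn}(\gamma_i)$, $\ext_{\ogup}(\delta)$, or $\ext_{\ogdn}(\delta)$ tends to $0$ or $\infty$.

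Next I would split into two cases for each degenerating cycle $c \in \{\gamma_1,\dots,\gamma_{g-1},\delta\}$. \emph{Case 1: only one of the two extremal lengths degenerates.} Say $\ext_{\ogup}(c) \to 0$ or $\infty$ while $\ext_{\ogdn}(c)$ stays bounded in $(0,\infty)$ (or vice versa). Then one of the two terms in $\height(c)$, namely $|e^{1/\ext_{\ogup}(c)} - e^{1/\ext_{\ogdn}(c)}|^2$ or $|e^{\ext_{\ogup}(c)} - e^{\ext_{\ogdn}(c)}|^2$, clearly blows up, so $\height \to \infty$ and we are done (all terms of $\height$ are nonnegative, so one blowing up suffices). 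This is the ``straightforward'' case flagged in Step 5b. \emph{Case 2: both extremal lengths degenerate simultaneously}, necessarily in the same direction (both $\to 0$ or both $\to \infty$) — this is forced by the conjugacy/reflexivity structure, as in the genus-one discussion around \eqref{eqn:genus1}. Here the coarse terms of $\height(c)$ do not obviously separate, and one must use the finer asymptotics: the claim is that the two extremal lengths degenerate at \emph{different rates}, and that the chosen form of $\height(c)$ — with both $e^{1/\ext(c)}$ and $e^{\ext(c)}$ appearing — is sensitive enough to detect this rate difference and still blow up.

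The main obstacle is exactly Case 2: proving that when both $\ext_{\ogup}(c)$ and $\ext_{\ogdn}(c)$ tend to $0$ (resp. $\infty$), their leading asymptotics differ enough that $\height(c) \to \infty$. The mechanism, previewed in Step 5b and to be supplied by the Monodromy Theorem~\ref{thm:mono} of Section~\ref{sec43}, is that the logarithmic terms in the asymptotic expansion of $1/\ext_{\ogup}(c)$ and $1/\ext_{\ogdn}(c)$ as the relevant moduli degenerate have \emph{opposite signs} (this is where the $\pm 1, \pm i$ period-sign data of the orthodisks enters). Concretely, if a degenerating conformal parameter $t \to 0$ produces $1/\ext_{\ogup}(c) = a_+ \log(1/t) + O(1)$ and $1/\ext_{\ogdn}(c) = a_- \log(1/t) + O(1)$ with $a_+ > 0 > a_-$ (or with $a_+ \ne a_-$ more generally), then $e^{1/\ext_{\ogup}(c)}$ and $e^{1/\ext_{\ogdn}(c)}$ grow (resp. decay) at genuinely different polynomial rates in $t$, so $|e^{1/\ext_{\ogup}(c)} - e^{1/\ext_{\ogdn}(c)}|^2 \to \infty$; the symmetric statement with $\ext(c)$ in place of $1/\ext(c)$ handles the case where both extremal lengths blow up. I would assemble the proof by first establishing the dichotomy and disposing of Case 1, then invoking Theorem~\ref{thm:mono} to get the sign discrepancy of the logarithmic coefficients in Case 2, and finally doing the elementary computation that $\height(c) \to \infty$ given such a discrepancy. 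Since $\height = \sum_i \height(\gamma_i) + \height(\delta)$ is a sum of nonnegative terms, blow-up of a single term along the degenerating sequence yields properness.
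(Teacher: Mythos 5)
Your overall skeleton --- completeness forces a conformal degeneration, then a dichotomy between ``only one extremal length degenerates'' (easy) and ``both degenerate simultaneously'' (needs a rate comparison) --- matches the paper's strategy, and your Case 1 and your use of Theorem \ref{thm:mono} for a shrinking finite edge of the $\{P_i\}$ boundary are exactly what the paper does. But there is a genuine gap in your Case 2: you assert that the Monodromy Theorem supplies the rate discrepancy for \emph{every} simultaneous degeneration, and it does not. The hypotheses of Theorem \ref{thm:mono} require the cycle to encircle (or foot on) a \emph{single} edge shrinking geometrically to zero; the critical degeneration you cannot reach this way is $V_+\to\infty$ (equivalently, after renormalizing $d(P_g,V_+)=1$, the entire chain of finite edges $P_0\dots P_{2g}$ collapsing to a point). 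There both $\ext_{\ogup}(\delta)$ and $\ext_{\ogdn}(\delta)$ tend to zero, but $\delta$ foots on the semi-infinite edges $\overline{E_2P_0}$ and $\overline{P_{2g}E_1}$, which do not shrink, and the collapsing object is a multi-vertex collision to which the two-point monodromy computation of Lemma \ref{lemma433} does not apply. The paper instead handles this case by the domain-monotonicity Claim (nested orthodisks give opposite inequalities for $\ext_{\ogup}$ and $\ext_{\ogdn}$), sandwiching the genus-$g$ extremal lengths between the genus-one ones, whose distinct vanishing rates ($x_4-x_3=O(\e^2)$ versus $y_4-y_3=O(\e^{2/3})$) were computed explicitly via Schwarz--Christoffel asymptotics in Section \ref{sec25}; a further sub-case with mismatched vertex angles requires the auxiliary notch-cutting construction and a double application of the Claim. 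None of this is in your outline, and without it the $\delta$-term blow-up in the $V_+\to\infty$ limit is unproved.

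A secondary, smaller gap: from completeness you infer that ``at least one extremal length $\ext(\gamma_i)$ or $\ext(\delta)$ tends to $0$ or $\infty$,'' but completeness only says the conformal structure leaves every compact set, i.e.\ \emph{some} curve class degenerates --- not a priori one of the $g$ cycles chosen for the height. The paper closes this by an explicit enumeration of the geometric degenerations (a finite edge of the $\{P_i\}$ chain shrinks; $V_+\to P_g$; $V_+\to\infty$) and by exhibiting, for each, a specific cycle among $\gamma_1,\dots,\gamma_{g-1},\delta$ whose extremal length degenerates. You should make that matching explicit rather than deducing it abstractly.
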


The proof is based on the following fundamental principle
we have used for the identical purpose in \cite{ww1} and
\cite{ww2}.

\begin{theorem}\label{thm:mono}  
Let $c$ be a cycle as above.
Consider a sequence of  pairs of conjugate orthodisks 
$\ogup$ and $\ogdn$ indexed by a parameter $n$
such that either $c$ encircles an edge shrinking
geometrically to zero and both $\ext_{\ogup}(\gamma)\to 0 $ 
and $\ext_{\ogdn}(\gamma)\to 0$
or $c$ foots on an edge shrinking geometrically to zero and 
both $\ext_{\ogup}(\gamma)\to \infty $ and $\ext_{\ogdn}(\gamma)\to \infty$.
Then $\height(c)\to \infty$ as $n\to\infty$. 
\end{theorem}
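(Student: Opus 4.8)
\textbf{Proof plan for Theorem \ref{thm:mono}.} The plan is to reduce the statement to a precise asymptotic comparison of the two extremal lengths $\ext_{\ogup}(c)$ and $\ext_{\ogdn}(c)$ as the relevant edge shrinks, exactly as in the genus-one computation carried out in Section \ref{sec25} around formula \eqref{eqn:genus1}. First I would treat the two cases separately but symmetrically: by the conjugacy of the domains, a cycle $c$ that \emph{encircles} a shrinking edge in $\ogup$ corresponds, under the vertex-preserving identification, to a cycle that \emph{foots on} the corresponding shrinking edge in $\ogdn$ (the roles of zeros and poles of $Gdh$ versus $G^{-1}dh$ being interchanged), so the ``$\ext\to 0$'' case for one domain is the ``$\ext\to\infty$'' case for the other and vice versa; passing to the reciprocal extremal length converts one case into the other, so it suffices to handle the case $\ext_{\ogup}(c)\to 0$ and $\ext_{\ogdn}(c)\to 0$.

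Next I would localize the computation. Map each orthodisk to the upper half plane $\BH$ by a Schwarz--Christoffel map, and observe that the extremal length $\ext_{\ogup}(c)$ (resp.\ $\ext_{\ogdn}(c)$) is, up to a bounded multiplicative factor, the extremal length in $\BH$ of the family of arcs separating the cluster of $\BH$-preimages of the vertices bounding the shrinking edge from the rest of the real axis. Since extremal length of such a family is monotone in the diameter of the excluded cluster, the asymptotics of $\ext(c)$ are governed by how fast that cluster of preimage points collapses as the geometric edge length $\e\to 0$. I would then run the Schwarz--Christoffel integral estimate: writing the shrinking edge length $\e$ as an SC integral over the collapsing cluster, substituting $s=(t-x_j)/(x_k-x_j)$ to rescale, and using that all the \emph{other} vertex preimages stay bounded away from the cluster and from each other, one gets $\e \asymp (\text{cluster diameter})^{p}$ for an exponent $p$ read off from the vertex data $a_i$ (the local exponents in \eqref{tag32}) of the orthodisk $\ogup$ at those vertices; similarly $\e\asymp(\text{cluster diameter})^{q}$ for $\ogdn$, with $q$ determined by the \emph{reciprocal} local exponents. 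Because the two orthodisks carry conjugate forms, the local angle data at a pair of corresponding vertices differ, forcing $p\ne q$; one of the two, say $p<q$, so the $\ogup$-cluster collapses faster, hence $\ext_{\ogup}(c)\to 0$ strictly faster than $\ext_{\ogdn}(c)$ — more precisely, there is a definite gap in the leading exponent of the asymptotic expansions of $1/\ext_{\ogup}(c)$ and $1/\ext_{\ogdn}(c)$ in terms of $\log\e$.

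With that exponent gap in hand, the conclusion is immediate: the first summand $|e^{1/\ext_{\ogup}(c)}-e^{1/\ext_{\ogdn}(c)}|^2$ of $\height(c)$ involves two exponentials of quantities both tending to $+\infty$ but at different linear-in-$\frac{1}{\text{(small quantity)}}$ rates, so their difference, and hence $\height(c)$, blows up; in the dual case where both extremal lengths tend to $\infty$ the second summand $|e^{\ext_{\ogup}(c)}-e^{\ext_{\ogdn}(c)}|^2$ plays the same role. The only remaining point is to justify the ``bounded multiplicative factor'' and ``bounded away from zero'' claims uniformly along the degenerating sequence; this is the genus-$g$ analogue of the normalization remark in Section \ref{sec25}, and I would dispatch it by normalizing the half-strip widths (the coordinate $b$) to $1$ and using that the remaining vertices, being separated from the collapsing cluster, contribute integrands and path lengths bounded above and below. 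The main obstacle is precisely this last uniformity: ensuring that as the sequence degenerates, no \emph{other} edge is simultaneously collapsing in a way that could corrupt the local SC estimate; I expect to handle it by noting that the hypothesis isolates a single collapsing edge for the cycle $c$, and that the monodromy/sign structure recorded in Definition \ref{def241} (the periods taking values in $\{+1,-1,+i,-i\}$) prevents cancellation in the relevant leading term — this is where Theorem \ref{thm:mono} will feed into the sign analysis of Lemma \ref{lemma435}.
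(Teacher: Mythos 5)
Your core mechanism --- a gap between the leading power-law exponents $p\ne q$ in the rescaled Schwarz--Christoffel estimates $\e\asymp(\text{cluster diameter})^{p}$ for $\ogup$ and $\e\asymp(\text{cluster diameter})^{q}$ for $\ogdn$ --- fails in precisely the situation the theorem is designed for. When $c$ encircles a single shrinking edge, the collapsing cluster consists of the two endpoints of that edge, whose local exponents $a_i/2$ alternate between $-1/2$ and $+1/2$; their sum is $0$ in \emph{both} orthodisks, so the rescaling yields the same leading exponent ($\e\asymp\delta$ and $\e\asymp\delta'$) on both sides. Indeed this is forced by conjugacy: the shrinking edge is a period, conjugate periods have equal modulus, so the edge shrinks at the same geometric rate in both domains and the conformal parameters $\delta,\delta'$ are comparable to leading order. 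The genus-one computation of Section \ref{sec25} on which you model your argument is special: there the collapsing cluster has an \emph{odd} number of vertices ($x_2,x_3,x_4$), so the alternating exponents sum to $\mp 1/2$ rather than cancelling, and only then do the power laws ($\e^2$ versus $\e^{2/3}$) differ.

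Because the leading rates coincide, the discrepancy that makes $\bigl\vert e^{1/\ext_{\ogup}(c)}-e^{1/\ext_{\ogdn}(c)}\bigr\vert$ blow up must be extracted from the subleading \emph{logarithmic} terms in the period expansions, and this is exactly what the paper's monodromy argument does (Lemma \ref{lemma433}, the Analyticity Lemma \ref{lemma424}, and the Real Analyticity Alternative Lemma \ref{lemma435}): analytic continuation of $t_{j+1}$ around $t_j$ shows that $F(\gamma)\mp\frac{\log\delta}{\pi}F(\beta)$ is single-valued with \emph{opposite} signs for the two conjugate orthodisks; combined with the equality of the period ratios forced by conjugacy, the assumption that $\delta'$ is real-analytic in and comparable to $\delta$ would make $\log(\delta\delta')$ single-valued near $0$, a contradiction, whence $\delta$ and $\delta'$ decay at genuinely different rates and Ohtsuka's formula finishes the proof. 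You invoke Lemma \ref{lemma435} only as a device to rule out cancellation in a uniformity step, whereas it is the engine of the entire proof. A secondary error: your reduction of the ``foots on'' case to the ``encircles'' case by swapping $\ogup$ and $\ogdn$ is not valid --- $c$ is the same labelled cycle in both domains and either encircles the shrinking edge in both or foots on it in both; the two cases are genuinely separate hypotheses, detected by the two separate summands of $\height(c)$, and the paper proves them in parallel with different normalizations of the collapsing cluster.
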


We postpone the proof of this theorem until after the 
proof of Theorem \ref{thm421}.

{\bf Proof of Theorem \ref{thm421}.} 
To show that the  height functions from section \ref{sec41} are proper, we need to prove
that for any
sequence of points in $\Delta$ converging to some boundary point, at least one of the terms
in the
height function goes to
infinity. The idea is as follows. By 
the completeness of the geometric coordinate system
(Theorem \ref{thm:complete}), at least one
of the two orthodisks degenerates conformally.
We will now analyze those possible geometric degenerations.

Begin by observing that
we may normalize the geometric coordinates such that the 
boundary of $\ogup$ containing the vertices $\{P_i\}$
has fixed `total length' $1$ 
between $P_0$ and $P_{2g}$, i.e. the sum of the
Euclidean lengths of the finite length edges is 1.
If the geometric degeneration 
involves degeneration
in this outer boundary component of $\p\overline{\ogup}$, 
then one of the cycles $\g_j$ that either
encircles or ends on an edge (or in the case 
where $P_{g-1}, P_g$ and $P_{g+1}$
coalesce, a pair of edges) must shrink to zero. 
By the Monodromy Theorem \ref{thm:mono},
the corresponding term of the 
height function goes to infinity, and we are done.

Alternatively, if there is no geometric degeneration on the
boundary component of $\ogup$ containing the vertices $\{P_i\}$,
then the degeneration must come from the vertex $V_+$ 
either limiting on $P_g$, or tending to infinity.
In the first case,
as in our discussion of the extremal
length geometry behind Karcher's surface, this then 
forces the extremal length $\ext_{\ogup}(\delta)$ to go to
$\infty$, while, in the dual orthodisk, no degeneration is
occuring and $\ext_{\ogdn}(\delta)$ is converging to a positive
value. Naturally, this also sends the corresponding term
$\SH(\delta)$ to $\infty$.

In the latter case of $V_+$ tending to infinity, and
no other degeneration on $\p\overline{\ogup}$, it is 
convenient to adopt a different normalization: for this
case, we set $d(P_g, V_+)=1$. This forces all $V_1,\ldots,V_{2g}$
to coalesce simultaneously. Then the argument
proceeds quite analogously to the argument we gave in 
section \ref{sec3} for the existence of Karcher's surface.  In particular,
the present case follows directly from that case, once
we take into account a well-known background fact.

{\bf Claim:} Let $\Omega \subset \Omega^{'}$, let $\Gamma$
be a curve system for $\Omega$ and let $\Gamma^{'}$
be a curve system for $\Omega^{'}$. Suppose that
$\Gamma \subset \Gamma^{'}$.  Then 
$\ext_{\Omega}(\Gamma) \ge \ext_{\Omega^{'}}(\Gamma^{'})$.

{\bf Proof of Claim:} Any candidate metric $\rho^{'}$ for
$\ext_{\Omega^{'}}(\Gamma^{'})$ restricts to a metric 
$\rho$ for $\ext_{\Omega}(\Gamma)$.
The minimum length of elements of $\Gamma$ 
in this restricted metric is at least as large 
as the minimum length of $\Gamma' \supset \Gamma$ in the extended
metric; moreover, the area of the metric restricted
to $\Omega$ is no larger than that of 
the $\rho'$-area of $\Omega' \supset \Omega$. Thus
$$
\f{\ell^2_\rho(\G)}{\text{Area}(\rho)} \ge 
\f{\ell^2_{\rho^{'}}(\G^{'})}{\text{Area}(\rho^{'})}\ .
$$
The claim 
follows by comparing these ratios for an extremizing
sequence $\rho_n^{'}$ for $\ext_{\Omega^{'}}(\Gamma^{'})$.

Then observe that the orthodisk $\ogup$ for 
$S_g$ sits strictly outside the orthodisk
$\ogup$ for $S_1$, where here we compare
corresponding orthodisks whose first and last vertices
($P_0$ and $P_{2g}$) agree, while $P_g$ for $S_1$ is constructed using the 
existing geometric data.  (See Figure
\ref{fig:compare}.) Thus the extremal length, say
$\ext^g_{\ogup}(\delta)$, for the curve
$\delta$ in the genus $g$ version of the domain 
$\ogup$, is less than the genus one version
$\ext^1_{\ogup}(\delta)$ of the extremal length of 
$\delta$ for that domain, i.e.
$\ext^g_{\ogup}(\delta) \le \ext^1_{\ogup}(\delta)$.

\begin{figure}[h] 
\centering
\includegraphics[width=4.5in]{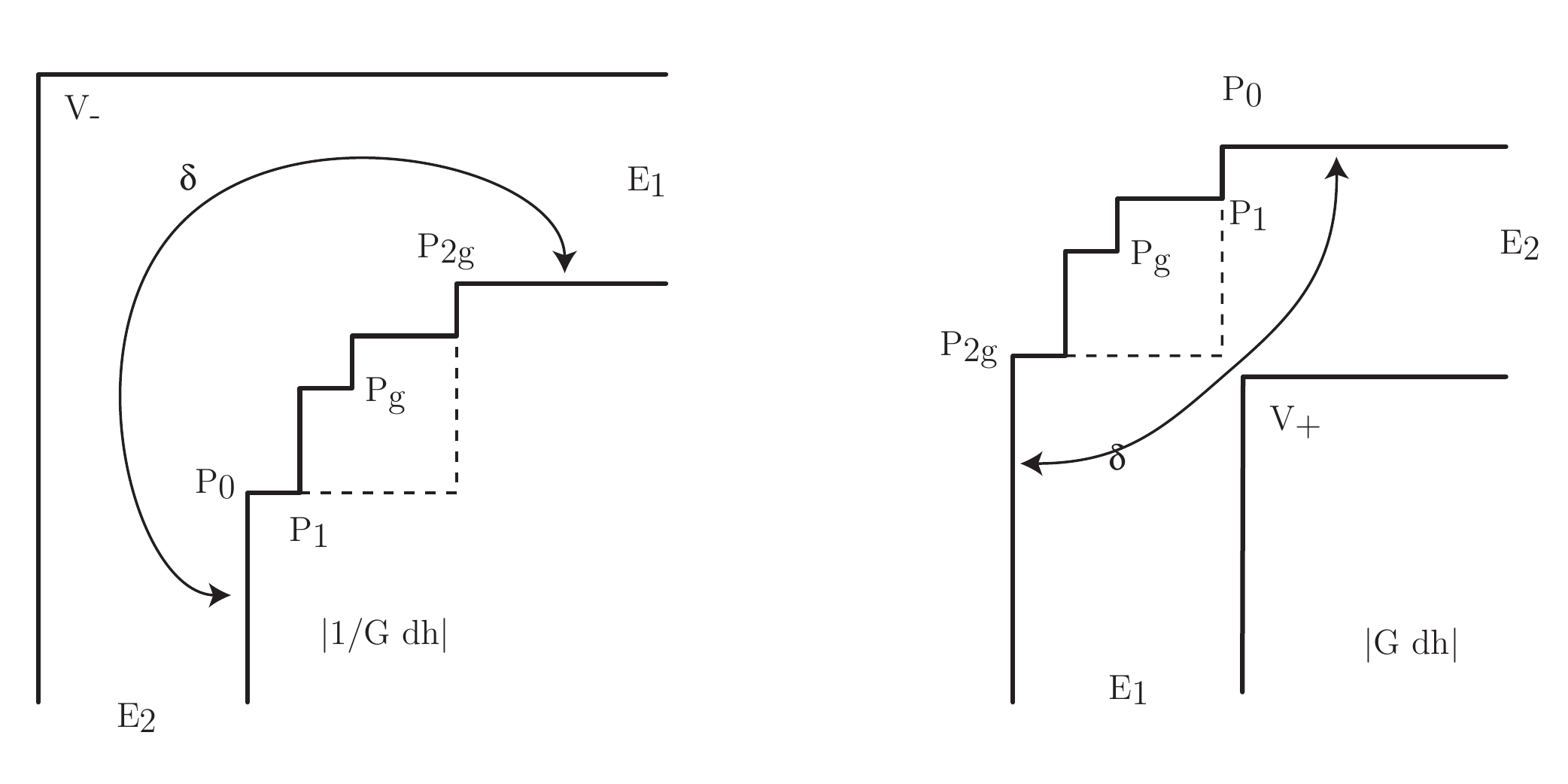} 
\caption{Orthodisk comparison}
\label{fig:compare}
\end{figure}

On the other hand, the corresponding orthodisk $\ogdn$ for 
$S_g$ sits strictly {\it inside} the corresponding
orthodisk $\ogdn$ for $S_1$, using the 
standard correspondence of $\ogup$ and $\ogdn$
orthodisks. 
Observe that for the $V_i$ close enough together,
the vertex $V_1$ of $S_1$ will lie outside that of  $\ogdn$ of $S_g$.
Thus $\ext^1_{\ogdn}(\delta) \le \ext^1_{\ogdn}(\delta)$.

Thus because we have 
$\ext_{\ogdn}^1(\delta) > \ext_{\ogup}^1(\delta)$ for the
case of $S_1$ (see \eqref{eqn:genus1}), with both quantities tending to
zero (at different rates), the claim
implies that we have the analogous inequality
$\ext_{\ogdn}^g(\delta) >> \ext_{\ogup}^g(\delta)$ holding
for $S_g$.  Moreover, the claim (and the notation) also implies that 
$\ext_{\ogup}(\delta)$ tends to zero at a rate
distinct from that of $\ext_{\ogdn}(\delta)$. 
Thus the height function $\height(\delta)$ in such a case 
tends to infinity.

There is one final case to consider, which is hidden a bit
because of our usual choice of conventions: it is only here
that this normalizing of notation can be misleading.  The issue is
that, in Figure~\ref{fig:compare} for instance, the angle at $P_g$
and the angle of $V_-$ are both $\pi/2$ in $\ogup$, and the angles are
$3\pi/2$ at both $P_g$ and $V_+$ in $\ogdn$. However, we of course
need to consider degenerations when the corresponding angles do not
agree, for example when the angle at $P_g$ in $\ogup$ is $3\pi/2$
while the angle at $V_-$ (also) in $\ogup$ is $\pi/2$.  [In that
situation, we will also be in the situation where the angle 
at $P_g$ in $\ogdn$ is $\pi/2$ and the angle at $V_+$ in
$\ogdn$ is $3\pi/2$.]

Now this situation is simply only a bit more complicated than the 
last case we considered, as it follows by applying the claim as
before and then the comparison for genus one,
only this time we have to apply that claim twice before invoking the
comparison for genus one.  

We also use a slightly different auxiliary construction, which we now 
explain. In the situation where the angle of $V_-$ in $\ogup$ is
$\pi/2$ while the angle at $P_g$ in $\ogup$ is
$3\pi/2$, imagine `cutting a notch out of $\ogdn$' near $P_g$: more 
precisely, replace a neighborhood of $\partial\ogup$ near $P_g$ with 
three vertices $P_{g-1}^*, P_g^*, P_{g+1}^*$ and edges between them
that alternate $\pi/2$ and $3\pi/2$ angles in the usual way.  This
creates an orthodisk $\ogup^*$ for a surface of quotient genus $g+1$, where the
angle at $P_g^*$ is now $\pi/2$, now equaling the angle
at $V_-$ opposite $P_g^*$.  Of course, this notch-cutting also
determines a conjugate domain $\ogdn^*$, where the angle at the (new)
central point $P_g^*$ is now $3\pi/2$, also equaling the angle at 
$V_+$ opposite it.  Thus, in considering the domains $\ogup^*$ and 
$\ogdn^*$, we have returned to the third case we just finished
considering. Fortunately, the comparisons between the extremal lengths
on $\ogup$ and $\ogup^*$  and those between $\ogdn$ and $\ogdn^*$
allow for us to conclude that $\ext_{\ogup}(\delta) >>
\ext_{\ogdn}(\delta)$ as follows: 

$$
\alignedat{2}
\ext_{\ogup}(\delta) &\ge \ext_{\ogup}^*(\delta) \qquad&\text{by the claimed principle}\\
& \ge \ext_{\ogdn}^1(\delta)& \text{as in the third case}&\\
& >>  \ext_{\ogup}^1(\delta)& \\
&\ge \ext_{\ogdn}^*(\delta) &\\
&\ge \ext_{\ogdn}(\delta)\qquad.&
\endaligned
$$

This treats the four
possible cases, and the theorem is proven.
\qed

\subsection {A monodromy argument}
\label{sec43}

In this section, we prove that the periods of orthodisks have incompatible
logarithmic
singularities in suitable coordinates and apply this to prove the 
Monodromy Theorem \ref{thm:mono}.
The main idea is that
to study the dependence of extremal lengths
of the geometric coordinates, it is necessary to understand the 
asymptotic dependence
of extremal lengths of the 
degenerating conformal polygons (which is classical and well-known, see
\cite{Oht}), and the asymptotic dependence of the geometric
coordinates of the degenerating conformal
polygons. This dependence is given by Schwarz-Christoffel 
maps which are well-studied
in many special cases. Moreover, it is known that these 
maps possess asymptotic
expansions in logarithmic terms. Instead of computing this 
expansion explicitly for the
two maps we need, we use a 
monodromy argument to
show that the crucial logarithmic terms have a different sign 
for the two expansions.

Let $\Delta_g$ be a geometric coordinate domain of dimension  $g\ge2$,
i.e. a simply connected domain equipped with defining geometric
coordinates for a pair
of orthodisks $\ogup$ and $\ogdn$  as usual.

Suppose
$\gamma$ is a cycle in the underlying conformal polygon 
which joins two non-adjacent edges
$P_1P_2$ with
$Q_1Q_2$. Denote by $R_1$
the vertex before  $Q_1$ and  by $R_2$
the vertex after   $Q_2$ and observe that by assumption, $R_2\ne P_1$ but that
we can possibly have $P_2=R_1$.
Introduce a second
cycle
$\beta$ which connects $R_1Q_1$ with $Q_2R_2$.

The situation is illustrated in the figure below; we have
replaced the labels of $P_i, V_{\pm}$ and $E_j$ that we use
for vertices in $\partial\ogup$ and $\partial\ogdn$ with 
generic labels of distinguished points on the boundary of the 
region:  these will represent in general the situations
that we would encounter in the orthodisk.  Of course, we 
retain the convention of using the same label name for
corresponding vertices in $\partial\ogup$ and $\partial\ogdn$.

\begin{figure}[h] 
\centering
\includegraphics[width=3in]{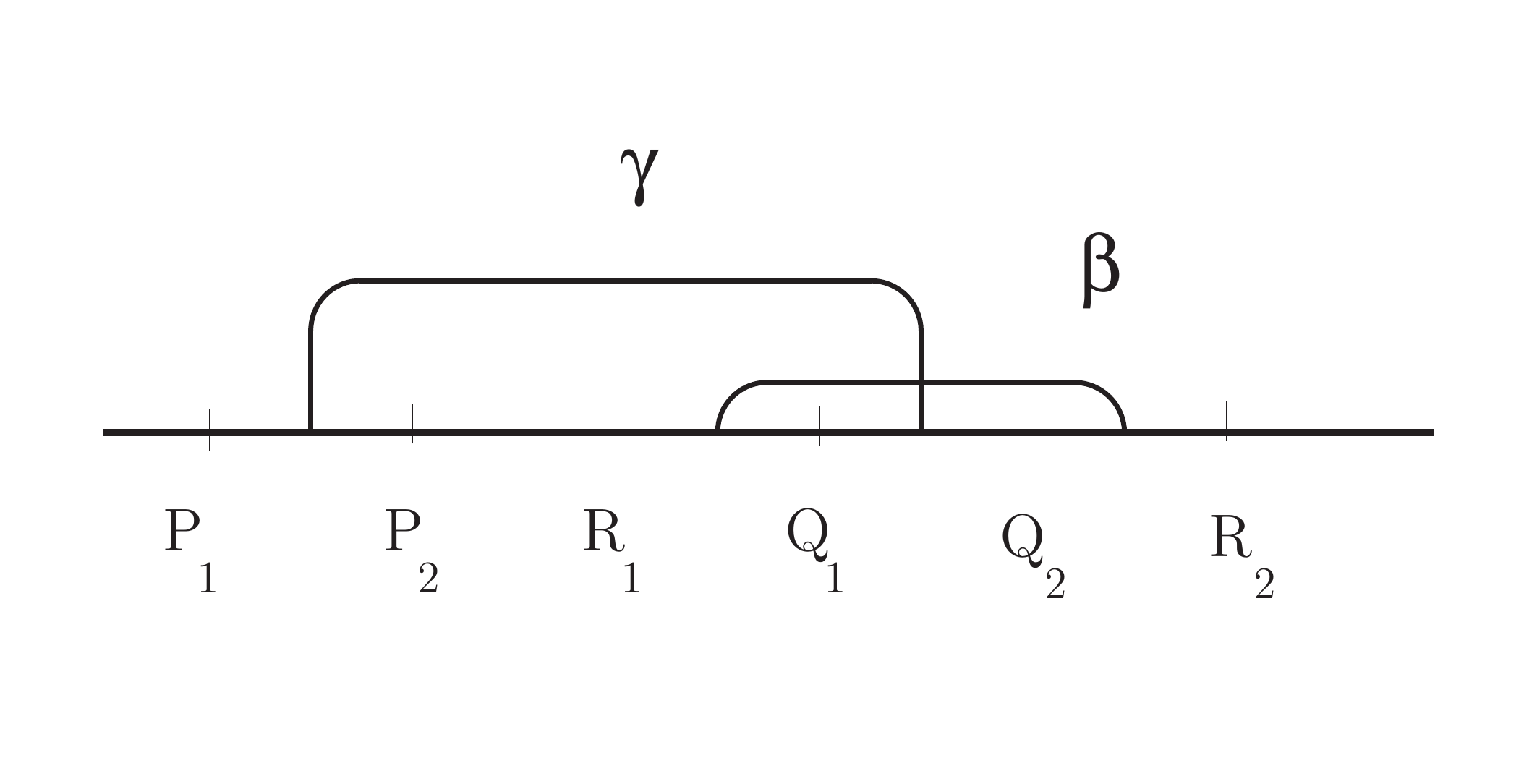} 
\caption{Monodromy argument}
\label{fig:mono1}
\end{figure}

We formulate the claim of Theorem \ref{thm:mono} more precisely in the following two
lemmas:

\begin{lemma}
\label{lem:mono1} 
Suppose that for a sequence $p_n\in \Delta$ with
$p_n\to p_0\in \partial\Delta$ we have that
$\ext_{\ogup(p_n)}(\gamma) \to 0$ and $\ext_{\ogdn(p_n)}(\gamma)\to 0$. Suppose furthermore that $\gamma$
is a cycle encircling an edge which degenerates geometrically to $0$ as $n\to\infty$.
Then
$$
\left\vert e^{1/\ext_{\ogup(p_n)}(\gamma)}-e^{1/\ext_{\ogdn(p_n)}(\gamma)} \right\vert^
	2\to\infty
	$$
\end{lemma}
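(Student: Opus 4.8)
\textbf{Proof proposal for Lemma \ref{lem:mono1}.}

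The plan is to track the asymptotics of the two extremal lengths $\ext_{\ogup(p_n)}(\gamma)$ and $\ext_{\ogdn(p_n)}(\gamma)$ as the encircled edge shrinks, show that each has an expansion of the form $\ext(\gamma) \sim c/\log(1/\e)$ (to leading order) as the edge-length $\e \to 0$, and then prove that the \emph{next} term, the coefficient in front of a $1/(\log(1/\e))^2$-type correction, carries an \emph{opposite sign} for the $\ogup$ and $\ogdn$ developments --- this sign difference being exactly what the monodromy argument of this section is designed to produce. Since $1/\ext_{\ogup(p_n)}(\gamma) \sim (1/c)\log(1/\e)$ and similarly for $\ogdn$, each of $e^{1/\ext(\gamma)}$ blows up like a positive power of $1/\e$; the sign-difference in the subleading term forces the two exponents $1/\ext_{\ogup(p_n)}(\gamma)$ and $1/\ext_{\ogdn(p_n)}(\gamma)$ to differ by an amount that itself tends to $+\infty$ (or at least is bounded below by a positive constant times $\log(1/\e)$, hence $\to\infty$). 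Once the two exponents are separated by a divergent gap while both tend to $+\infty$, the difference $|e^{1/\ext_{\ogup(p_n)}(\gamma)} - e^{1/\ext_{\ogdn(p_n)}(\gamma)}|$ is dominated by the larger of the two exponentials and hence tends to $\infty$, giving the stated conclusion after squaring.

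The key steps, in order, are: (i) Normalize by a Schwarz--Christoffel map $\BH \to \ogup(p_n)$ sending the four vertices adjacent to the degenerating edge to real points $x_1(n) < x_2(n) < x_3(n) < x_4(n)$, with $\gamma$ corresponding to an annular/quadrilateral region in $\BH$ separating $\{x_2,x_3\}$ from the complementary pair; by the classical theory (as cited, \cite{Oht}), $\ext_{\ogup(p_n)}(\gamma)$ is a conformal modulus of a degenerating quadrilateral in $\BH$ whose asymptotics are governed by the cross-ratio of $x_1,x_2,x_3,x_4$, and this modulus behaves like $\pi/\log(1/\lambda_n) + O(1/(\log 1/\lambda_n)^2)$ where $\lambda_n \to 0$ is that cross-ratio. (ii) Relate the geometric coordinates --- i.e. the actual Euclidean edge lengths of $\ogup(p_n)$, in particular the shrinking length $\e_n$ --- to the conformal positions $x_i(n)$ via the Schwarz--Christoffel integral; this is where one extracts the logarithmic expansion of $\e_n$ in terms of $\lambda_n$, and the exponents $a_i/2$ in the Schwarz--Christoffel integrand (the vertex data) enter. (iii) Do the same for $\ogdn(p_n)$, whose Schwarz--Christoffel integrand has the \emph{reciprocal-type} exponents at the relevant vertices (because one form is $Gdh$ and the other $G^{-1}dh$, so the angles at the finite vertices are complementary --- e.g. $3\pi/2$ versus $\pi/2$). (iv) Run the monodromy comparison: the two Schwarz--Christoffel integrands, viewed as solutions of the associated hypergeometric-type ODE, have local exponents at the degenerating configuration that differ, and circling the degenerate configuration produces a monodromy whose logarithmic term appears with opposite sign in the two expansions --- this is precisely the mechanism flagged in the ``Brief Sketch'' (Step 5b) and developed in this section. (v) Combine: both $\ext_{\ogup(p_n)}(\gamma), \ext_{\ogdn(p_n)}(\gamma) \to 0$, both reciprocals $\to +\infty$ like $\log(1/\e_n)$, but their difference $|1/\ext_{\ogup(p_n)}(\gamma) - 1/\ext_{\ogdn(p_n)}(\gamma)|$ grows (being the difference of two quantities whose next-order logarithmic corrections have opposite sign), so the larger exponential dominates and $|e^{1/\ext_{\ogup(p_n)}(\gamma)} - e^{1/\ext_{\ogdn(p_n)}(\gamma)}|^2 \to \infty$.

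The main obstacle will be step (iv), the bookkeeping of the monodromy: one must set up the two Schwarz--Christoffel integrals as a genuine pair of solutions to the \emph{same} second-order Fuchsian equation (or else as two integrals whose ratio is controlled by such an equation), identify which branch/cycle corresponds to ``circling the degenerating edge,'' and verify that the monodromy matrix around that singular point is a unipotent shear whose off-diagonal (logarithmic) entry has a fixed sign --- and then track how that sign is inherited by the asymptotic expansions of $\e_n$, and finally by the moduli $\ext(\gamma)$. A secondary technical point is making precise the passage from ``the subleading logarithmic coefficients differ in sign'' to ``$|1/\ext_{\ogup(p_n)}(\gamma) - 1/\ext_{\ogdn(p_n)}(\gamma)| \to \infty$'' rather than merely ``stays bounded away from $0$''; if only the weaker separation is available, one still concludes the lemma because then the two exponentials $e^{1/\ext_{\ogup(p_n)}(\gamma)}$ and $e^{1/\ext_{\ogdn(p_n)}(\gamma)}$ --- both diverging --- have ratio bounded away from $1$, so their difference is comparable to the larger one and hence diverges. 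I would isolate the clean ODE/monodromy statement as the companion Lemma \ref{lemma435} referenced earlier and simply quote it here, keeping the present proof to the reduction (i)--(iii) plus the conclusion (v).
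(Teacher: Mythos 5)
Your proposal is correct and follows essentially the same route as the paper: normalize via Schwarz--Christoffel, invoke Ohtsuka's asymptotics for the degenerating extremal length, and quote the Real Analyticity Alternative Lemma \ref{lemma435} to get the opposite-sign logarithmic terms, which force the two conformal degeneration rates (the paper's $\epsilon$ and $\epsilon'$) to be incomparable and hence the difference of exponentials to diverge. The only cosmetic difference is that the paper extracts the rate discrepancy by a single-valuedness contradiction for $\log(\epsilon\epsilon'(\epsilon))$ rather than by comparing subleading coefficients in the extremal-length expansion directly.
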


\begin{lemma}
\label{lem:mono2} Suppose that for a sequence $p_n\in \Delta$ with
$p_n\to p_0\in \partial\Delta$ we have that
$\ext_{\ogup(p_n)}(\gamma) \to \infty$ and $\ext_{\ogdn(p_n)}(\gamma)\to \infty$.
Suppose furthermore that $\gamma$
is a cycle footing on an edge which degenerates geometrically to $0$ as $n\to\infty$.
Then
$$
\left\vert e^{\ext_{\ogup(p_n)}(\gamma)}-e^{\ext_{\ogdn(p_n)}(\gamma)} \right\vert^
	2\to\infty
	$$
\end{lemma}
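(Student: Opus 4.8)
\textbf{Proof proposal for Lemmas \ref{lem:mono1} and \ref{lem:mono2}.}

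The plan is to reduce both lemmas to a single computation about the asymptotics of the relevant extremal lengths as functions of the geometric coordinates, and then to exploit a sign discrepancy in the logarithmic part of those expansions coming from a monodromy argument around the degenerating coordinate. First I would fix, once and for all, a family of Schwarz--Christoffel maps $F_{Gdh}:\BH\to\ogup$ and $F_{1/Gdh}:\BH\to\ogdn$ depending on the degenerating parameter (call it $t$, the Euclidean length of the shrinking edge that $\gamma$ encircles in Lemma \ref{lem:mono1}, or of the edge on which $\gamma$ foots in Lemma \ref{lem:mono2}), normalized so that three of the preimages of vertices are pinned and the conjugacy (reflexivity-type) relation between $\ogup$ and $\ogdn$ is respected. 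The extremal length $\ext_{\ogup}(\gamma)$ is, by the argument already used in Proposition \ref{prop:extanalytic}, the modulus of a rectangle obtained by a further Schwarz--Christoffel map of $\BH$ straightening the two edges that $\gamma$ joins; the classical degeneration theory for conformal polygons (cited here as \cite{Oht}) gives that, as the relevant preimages in $\BH$ collide, this modulus behaves like $\frac{1}{\pi}\log\frac{1}{\delta} + O(1)$ (resp. $\frac{1}{\pi}\log\delta^{-1}$ inverted) where $\delta$ is the $\BH$-distance between the colliding preimages. So the problem becomes: relate the $\BH$-gap $\delta$ for $\ogup$ and the corresponding $\BH$-gap for $\ogdn$ to the single Euclidean parameter $t$, and track the leading behavior of $1/\ext$ (resp. $\ext$) in $t$.

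The key step is the monodromy observation. Writing $\delta_{\ogup}(t)$ and $\delta_{\ogdn}(t)$ for the two $\BH$-gaps, the Schwarz--Christoffel integrals expressing the shrinking Euclidean length $t$ in terms of $\delta_{\ogup}$, $\delta_{\ogdn}$ are of the hypergeometric type (indeed, exactly the genus-one computation carried out in Section \ref{sec2}, with $f_{Gdh}$ and $f_{1/Gdh}$ there), and these integrals pick up, upon analytic continuation of $t$ around $0$, monodromy whose logarithmic term has a definite sign; crucially, because $\ogup$ and $\ogdn$ are conjugate orthodisks the exponents $a_i/2$ appearing in the two Schwarz--Christoffel integrands are negatives (up to the shift by $1$) of one another, so the two integrals acquire logarithmic singularities with \emph{opposite} sign. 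Concretely this forces $\delta_{\ogup}(t)\asymp t^{\alpha}$ and $\delta_{\ogdn}(t)\asymp t^{\beta}$ with $\alpha\neq\beta$ (the genus-one model gives $\alpha = 2$ versus $\beta = 2/3$, exactly as in \eqref{eqn:genus1}), hence $1/\ext_{\ogup}(\gamma) = \frac{\pi}{\alpha\log(1/t)} + o(\cdot)$ versus $1/\ext_{\ogdn}(\gamma) = \frac{\pi}{\beta\log(1/t)} + o(\cdot)$. These two quantities, though both tending to $0$, differ to leading order by a term comparable to $1/\log(1/t)$, so their difference times $\log(1/t)$ stays bounded away from $0$. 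Exponentiating: $e^{1/\ext_{\ogup}(\gamma)} - e^{1/\ext_{\ogdn}(\gamma)} = e^{1/\ext_{\ogup}(\gamma)}\bigl(1 - e^{(1/\ext_{\ogdn}) - (1/\ext_{\ogup})}\bigr)$, and since $1/\ext_{\ogup}(\gamma)\to 0^+$ the prefactor $\to 1$ while the difference in the exponent is negative and $\to 0$, so this particular product $\to 0$, which is the \emph{wrong} sign; so in fact I must be more careful and instead compare at the scale where the difference $(1/\ext_{\ogup}) - (1/\ext_{\ogdn})$, multiplied against the large quantity $e^{1/\ext}$...

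Let me restate the endgame correctly. Having $1/\ext_{\ogup}(\gamma)$ and $1/\ext_{\ogdn}(\gamma)$ both $\to 0$ but with ratio bounded away from $1$ is not yet enough; what one actually shows from the precise expansions is the stronger statement that $\ext_{\ogup}(\gamma)$ and $\ext_{\ogdn}(\gamma)$ themselves (which are \emph{large}, $\asymp \log(1/t)$) differ by an unbounded amount, namely $\ext_{\ogup}(\gamma) - \ext_{\ogdn}(\gamma) \asymp (\alpha - \beta)\frac{1}{\pi}\log(1/t)\to\pm\infty$. Then in Lemma \ref{lem:mono2} directly $\bigl|e^{\ext_{\ogup}(\gamma)} - e^{\ext_{\ogdn}(\gamma)}\bigr| = e^{\min}\bigl|e^{|\ext_{\ogup} - \ext_{\ogdn}|} - 1\bigr|\to\infty$ since the exponent gap $\to\infty$ and $e^{\min}\to\infty$ as well. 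For Lemma \ref{lem:mono1}, by the reciprocal relationship between the roles of ``encircling'' and ``footing'' (the edge $\gamma$ encircles is shrinking, so the cycle $\gamma$ in the \emph{conjugate} picture, or equivalently $\gamma$ measured the other way, has extremal length $\to\infty$), the same computation applied to $1/\ext$ instead of $\ext$ gives $1/\ext_{\ogup}(\gamma) - 1/\ext_{\ogdn}(\gamma) \asymp \log(1/t)\to\infty$, and then $\bigl|e^{1/\ext_{\ogup}(\gamma)} - e^{1/\ext_{\ogdn}(\gamma)}\bigr|\to\infty$ by the same estimate. The main obstacle, and the technical heart of the argument, is verifying that the monodromy of the two Schwarz--Christoffel integrals really does produce logarithmic terms of opposite (hence unequal) sign in the general $S_g$ orthodisk --- one must check that the local exponents at the colliding vertices are exactly those of the genus-one model up to factors bounded away from $0$ and $\infty$ (this is the ``simplified symmetric maps'' reduction already rehearsed in Section \ref{sec2}), so that the sign of the logarithmic coefficient is governed entirely by the orthodisk vertex data $a_i$, whose interplay of four signs $\pm 1,\pm i$ was flagged in Definition \ref{def241} as precisely what drives this argument.
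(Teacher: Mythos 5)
Your proposal is correct and follows essentially the same route as the paper: Schwarz--Christoffel asymptotics reduced to the genus-one model, a monodromy argument showing the logarithmic terms in the two conjugate orthodisks' expansions carry opposite signs (the paper's Real Analyticity Alternative Lemma \ref{lemma435}), and Ohtsuka's logarithmic formula for the extremal length to convert the resulting difference in degeneration rates into blow-up of $\bigl|e^{\ext_{\ogup}(\gamma)}-e^{\ext_{\ogdn}(\gamma)}\bigr|$. The only cosmetic difference is that you package the conclusion as distinct power laws $\delta\asymp t^{\alpha}$ versus $t^{\beta}$, whereas the paper derives a contradiction from assuming the two half-plane gaps $\epsilon,\epsilon'$ are comparable (via single-valuedness of $\log(\epsilon\epsilon')$); your mid-paragraph sign confusion is correctly self-repaired in the restated endgame.
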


\begin{proof} We first prove Lemma \ref{lem:mono1}.

Consider the conformal polygons corresponding to the pair of orthodisks.
Normalize the punctures by M\"obius transformations so that
$$P_1=-\infty, P_2=0, Q_1=\epsilon, Q_2=1$$
for $\ogup$ and
$$P_1=-\infty, P_2=0, Q_1=\epsilon', Q_2=1$$
for $\ogdn$.

If $\alpha$ is a curve in a domain $\Omega \subset \BC$, then define 
$\per \alpha(\Omega) = \int_{\alpha}dz$.  Here our focus is on periods of the
one-form
$dz$ as we are typically interested in domains $\Omega$ which are
developed images of pairs $(\Omega, \omega)$ of domains and 
one-forms on those domains, i.e. $z(p) = \int_{p_0}^p \omega$.
By the assumption of Lemma \ref{lem:mono1}, we know that $\epsilon, \epsilon'\to 0$
as $n\to\infty$. 

We now allow $Q_1$ to move in the complex plane and apply the 
Real Analyticity Alternative Lemma \ref{lemma435} below to the curve
$\epsilon = \epsilon_0
e^{it}$: here we are regarding the position of $Q_1$ as traveling
along a small circle around the origin, i.e. its 
defined position $\e \in \BR$ has been extended to allow
complex values. We will
  conclude from that lemma  that either 

\begin{equation}
\frac{\vert\per\gamma(\ogup)\vert}{\vert\per\beta(\ogup)\vert} +
\frac1\pi \log\epsilon =: F_1(\epsilon)
\label{eqn:alt1}
\end{equation}
is single-valued in $\epsilon$ and 
\begin{equation}
\frac{\vert\per\gamma(\ogdn)\vert}{\vert\per\beta(\ogdn)\vert} -
\frac1\pi \log\epsilon' =:F_2(\epsilon') = F_2(\epsilon'(\epsilon))
\label{eqn:alt2}
\end{equation}
is single-valued in $\epsilon'$ or vice versa, with signs exchanged. Without loss of
generality, we can treat
the first case.

Now suppose that $\epsilon'$ is real analytic (and hence single-valued)
in $\epsilon$ and comparable to $\epsilon$ near $\epsilon=0$. Then using that
$\ogup$ and $\ogdn$ are conjugate implies that
$$\frac{\vert\per\gamma(\ogup)\vert}{\vert\per\beta(\ogup)\vert}=\frac{\vert\per\gamma(\ogdn)\vert}
{\vert\per\beta(\ogdn)\vert}\ .$$
By subtracting the function $F_1(\epsilon)$ in 
\ref{eqn:alt1} from the function $F_2(\epsilon')$ in \ref{eqn:alt2}
(both
of which are single-valued in $\epsilon$) we get that
$$\log(\epsilon\epsilon'(\epsilon))$$
is single-valued in $\epsilon$ near $\epsilon=0$ which contradicts that
$\epsilon, \epsilon' \to 0$.

Now  Ohtsuka's extremal length formula states that for the current normalization of $\ogup(p_n)$
we have 
$$\ext(\gamma) = O\left(\log\vert \epsilon\vert\right)$$
(see \cite{Oht}). We conclude
that
$$\vert e^{1/\ext_{\ogup(p_n)}(\gamma)}-
e^{1/\ext_{\ogdn(p_n)}(\gamma)}\vert =O\left(\frac1{\epsilon}-\frac1{\epsilon'}\right)
$$
which goes to infinity,
since we have shown that $\e$ and $\e'$ tend to zero at 
different rates.  This proves Lemma \ref{lem:mono1}.

The proof of Lemma \ref{lem:mono2} is very similar: For convenience, we normalize the
points of
the punctured disks such that

$$P_1=-\infty, P_2=0, Q_1=1, Q_2=1+ \epsilon$$
for $\ogup$ and
$$P_1=-\infty, P_2=0,Q_1=1, Q_2=1+\epsilon'$$
for $\ogdn$.

By the assumption of Lemma \ref{lem:mono2}, we know that $\epsilon, \epsilon'\to 0$
as $n\to\infty$. We now apply the Real Analyticity Alternative 
Lemma \ref{lemma435} below to the curve
$1+\epsilon_0
e^{it}$ and conclude that
$$\frac{\per\gamma(\ogup)}{\per\beta(\ogup)} + \frac1\pi \log\epsilon$$
is single-valued in $\epsilon$ while
$$\frac{\per\gamma(\ogdn)}{\per\beta(\ogdn)} - \frac1\pi \log\epsilon'$$
is single-valued in $\epsilon'$.
The rest of the proof is identical to the proof of Lemma \ref{lem:mono1}.
\end{proof}

To prove the needed  Real Analyticity Alternative Lemma \ref{lemma435}, 
we need asymptotic expansions of the extremal length in 
terms of the geometric coordinates of
the orthodisks. Though not much is known explicitly about extremal lengths
in general,
for the chosen cycles we can reduce this problem to an asymptotic control of
Schwarz-Christoffel integrals. Their monodromy properties allow us to
distinguish their
asymptotic behavior by the sign of logarithmic terms.

We introduce some notation:
suppose we have an orthodisk such that the angles at the vertices
alternate between $\pi/2$ and $-\pi/2$ modulo $2\pi$. 
(We will
also allow some angles to be $0$ modulo $2\pi$ but they will not be 
relevant for this argument.) Consider the
Schwarz-Christoffel map
$$
F:z \mapsto \int^z (t-t_1)^{a_1/2}\cdot\ldots\cdot(t-t_n)^{a_n/2} \,dt
$$
from a conformal polygon with vertices at $t_i$ to this orthodisk:
here the exponents $a_j$ alternate between $-1$ and $+1$, depending 
on whether the angles at the vertices are $\pi/2$ or $-\pi/2, \pmod
{2\pi}$, respectively. Choose
four distinct vertices
$t_{i},t_{i+1}, t_j, t_{j+1}$ (not necessarily consecutive). 
Introduce a cycle $\gamma$ in the upper half
plane connecting edge
$(t_{i},t_{i+1})$ with edge $(t_j, t_{j+1})$ and denote by $\bar\gamma$ the
closed cycle obtained from
$\gamma$ and its mirror image at the real axis. Similarly, denote by
$\beta$ the cycle connecting
$(t_{j-1},t_j)$ with $(t_{j+1},t_{j+2})$ and by $\bar\beta$ the cycle
together with its mirror image.

Now consider the Schwarz-Christoffel period integrals
\begin{align*}
F(\gamma) &= \frac12 \int_{\bar\gamma}
(t-t_1)^{a_1/2}\cdot\ldots\cdot(t-t_n)^{a_n/2} dt\\
F(\beta) &= \frac12 \int_{\bar\beta}
(t-t_1)^{a_1/2}\cdot\ldots\cdot(t-t_n)^{a_n/2} dt
\end{align*}
as  multivalued functions depending on the
{\it now complex} parameters $t_i$.

\begin{lemma}\label{lemma433} Under analytic continuation of $t_{j+1}$ around
$t_j$ the periods change their values like

\begin{align*}
F(\gamma) &\rightarrow F(\gamma) + 2 F(\beta) \\
F(\beta) &\rightarrow F(\beta )
\end{align*}
\end{lemma}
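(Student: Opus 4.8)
The plan is to track how the two Schwarz-Christoffel period integrals $F(\gamma)$ and $F(\beta)$ transform as $t_{j+1}$ is analytically continued along a small loop encircling $t_j$. The key structural fact is that the integrand $\Phi(t)\,dt := (t-t_1)^{a_1/2}\cdots(t-t_n)^{a_n/2}\,dt$ has, near $t=t_j$, a local factor $(t-t_j)^{a_j/2}$ and near $t=t_{j+1}$ a local factor $(t-t_{j+1})^{a_{j+1}/2}$, where by hypothesis $a_j, a_{j+1} \in \{-1,+1\}$ and they are consecutive, hence opposite in sign. So near the colliding pair of vertices the integrand looks (up to a factor holomorphic and nonvanishing there) like $(t-t_j)^{\pm 1/2}(t-t_{j+1})^{\mp 1/2}$. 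I would first reduce to this local model: write $\Phi = h(t)\,(t-t_j)^{a_j/2}(t-t_{j+1})^{a_{j+1}/2}$ with $h$ holomorphic and nonzero in a disk $D$ containing $t_j$ and (for small $\epsilon$) $t_{j+1}$, but none of the other $t_k$; then the monodromy of $F(\gamma)$ and $F(\beta)$ under $t_{j+1}\mapsto t_j$ is entirely governed by the integrals of this local model over the relevant arcs, since $h$ and the rest of the product return to their original branches.

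Next I would realize $F(\gamma)$ and $F(\beta)$ as periods of the closed forms $\Phi\,dt$ over the symmetrized cycles $\bar\gamma,\bar\beta$ on the double, i.e. over genuine closed loops in $\BC\setminus\{t_1,\dots,t_n\}$. The cycle $\bar\gamma$ joins the edge $(t_i,t_{i+1})$ to the edge $(t_j,t_{j+1})$ and its mirror, so as a closed loop it separates $\{t_j,t_{j+1}\}$ from $\{t_i,t_{i+1}\}$; the cycle $\bar\beta$ is a small loop around the segment $[t_j,t_{j+1}]$ — more precisely it encircles exactly the two points $t_j$ and $t_{j+1}$. The Picard–Lefschetz / monodromy computation is then: dragging $t_{j+1}$ once counterclockwise around $t_j$ acts on the first homology of the punctured plane, and on the period of $\bar\gamma$ it adds an integer multiple of the period of a vanishing cycle, that vanishing cycle being (twice) $\bar\beta$; meanwhile $\bar\beta$, being invariant under the braid that keeps $t_j$ and $t_{j+1}$ together, is fixed. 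To pin the coefficient to exactly $2$ I would compute the local model explicitly: on the double, $\oint (t-t_j)^{1/2}(t-t_{j+1})^{-1/2}\,dt$ around the branch cut picks up a factor determined by the $\pm\pi i$ jumps of the two half-integer exponents, and the symmetrization across the real axis (the factor $\tfrac12$ in the definitions of $F(\gamma)$, $F(\beta)$) converts the naive contributions into the clean relation $F(\gamma)\mapsto F(\gamma)+2F(\beta)$, $F(\beta)\mapsto F(\beta)$.

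I expect the main obstacle to be bookkeeping of branches and orientations rather than anything deep: one must be careful that the exponents $a_j/2, a_{j+1}/2$ are genuinely $\pm 1/2$ (the remark allows some angles $\equiv 0 \pmod{2\pi}$, i.e. some $a$'s that are not $\pm 1$, but those vertices are away from the collision and contribute nothing to the monodromy), that the symmetrized cycle $\bar\beta$ winds the correct number of times around each of $t_j,t_{j+1}$, and that the analytic continuation is taken in the direction consistent with the conventions fixed just before the lemma. A clean way to avoid sign ambiguities is to deform $\bar\gamma$ so that it agrees with $\bar\beta$ except along a small arc near $t_j$ where the path must be pushed to the other side of $t_j$ once $t_{j+1}$ has looped around; the difference $F(\gamma)^{\text{new}} - F(\gamma)^{\text{old}}$ is then literally an integral of $\Phi\,dt$ over a loop homologous to $\pm 2\bar\beta$, and evaluating the local half-integer monodromy fixes both the coefficient and the sign. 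Since $\bar\beta$ only ever encircles the pair $\{t_j,t_{j+1}\}$, which the braid preserves, its period is manifestly unchanged, giving the second line of the lemma for free.
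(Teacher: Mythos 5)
Your proposal is correct and takes essentially the same route as the paper: both treat the analytic continuation as a braid isotopy of the punctured plane under which $\bar\beta$ (encircling exactly $t_j$ and $t_{j+1}$) returns to itself, so $F(\beta)$ is fixed, while $\bar\gamma$ is Dehn-twisted about $\bar\beta$, and both isolate the half-integer exponents at the two consecutive colliding vertices as the reason the two intersection contributions add to give the coefficient $2$ rather than cancelling. The only difference is cosmetic: you justify the coefficient via the local Picard--Lefschetz model for $(t-t_j)^{\pm 1/2}(t-t_{j+1})^{\mp 1/2}$, whereas the paper reads it off by developing the flat structure of the doubled orthodisk along $\gamma'$ -- the same computation in two languages.
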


\begin{proof} 
The path of analytic continuation of $t_{j+1}$ around
$t_j$ gives rise to an isotopy of $\BC$ which moves $t_{j+1}$ along this path.
This isotopy drags 
$\beta$ and
$\gamma$  to new cycles $\beta'$ and $\gamma'$.

Because the curve $\beta$ is defined to surround $t_j$ and $t_{j+1}$, the
analytic continuation merely returns $\beta$ to $\beta'$. Thus,
because $\beta'$ equals $\beta$, their periods are also equal. 
On the other hand, the curve $\g$ is not equal to $\g'$:
informally, $\g'$ is obtained as the Dehn twist of $\g$ around
$\bar\beta$. Now,
the period of $\gamma'$ is obtained by developing the flat  structure of the
doubled orthodisk along $\gamma'$. To compute
this flat structure, observe 
the crucial fact that the angles at the orthodisk 
vertices are either $\pi/2$ or
$-\pi/2$, modulo $2\pi$. In either case, 
we see from the developed flat structure
that the period of $\gamma'$ equals the period of $\gamma$ plus twice
the period of $\beta$.
\end{proof}

Now denote by $\delta:= t_{j+1}-t_j  $ and fix all $t_i$ other than
$t_{j+1}$: we regard $t_{j+1}$ as the independent variable, here
viewed
as complex, since we are allowing it to travel around $t_j$.

\begin{lemma}[Analyticity Lemma]\label{lemma424} The function
$F(\gamma)-\frac{\log\delta}{\pi i} F(\beta)$ is single-valued and
holomorphic in $\delta$ in a neighborhood of
$\delta=0$.
\end{lemma}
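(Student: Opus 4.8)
The plan is to combine the monodromy computation of Lemma \ref{lemma433} with the observation that $F(\beta)$ is holomorphic in $\delta$ near $\delta = 0$. First I would note that $F(\beta)$, being the (half-)period of the Schwarz-Christoffel integrand over the cycle $\bar\beta$ which encircles both $t_j$ and $t_{j+1}$, extends holomorphically across $\delta = 0$: the only singularities of the integrand near the collision are at $t_j$ and $t_{j+1}$, both enclosed by $\bar\beta$, and the integrand there has exponents $a_j/2, a_{j+1}/2$ with $a_j, a_{j+1} \in \{-1, +1\}$, so the contour integral over $\bar\beta$ depends analytically on $\delta$ even as $\delta \to 0$ (the potentially singular contributions being of the form $\delta^{1 + a_j/2 + a_{j+1}/2}$, which is either $\delta^0$, $\delta^{\pm 1}$, times a unit; since $\bar\beta$ is a genuine loop the half-integer powers cancel and what remains is holomorphic). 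So $F(\beta) = F(\beta)(\delta)$ is holomorphic and single-valued near $\delta = 0$.

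Next I would analyze the monodromy of $F(\gamma)$. By Lemma \ref{lemma433}, analytic continuation of $t_{j+1}$ once around $t_j$ — that is, $\delta \mapsto e^{2\pi i}\delta$ — sends $F(\gamma) \mapsto F(\gamma) + 2F(\beta)$ while fixing $F(\beta)$. Now I compare this with the model multivalued function $\frac{\log\delta}{\pi i} F(\beta)$: under $\delta \mapsto e^{2\pi i}\delta$ we have $\log\delta \mapsto \log\delta + 2\pi i$, so $\frac{\log\delta}{\pi i}F(\beta) \mapsto \frac{\log\delta}{\pi i}F(\beta) + \frac{2\pi i}{\pi i}F(\beta) = \frac{\log\delta}{\pi i}F(\beta) + 2F(\beta)$, using that $F(\beta)$ is itself single-valued. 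Therefore the difference $G(\delta) := F(\gamma) - \frac{\log\delta}{\pi i}F(\beta)$ has trivial monodromy around $\delta = 0$, i.e. it is single-valued in a punctured neighborhood of $\delta = 0$.

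It remains to upgrade ``single-valued'' to ``holomorphic at $\delta = 0$,'' i.e. to rule out an essential singularity or a pole. For this I would give a crude growth estimate: as $\delta \to 0$, the period $F(\gamma)$ grows at most logarithmically in $\delta$ (the only new feature of the degeneration as $t_{j+1} \to t_j$ is the pinching of the cycle $\gamma$, which contributes a $\log\delta$ term to a Schwarz-Christoffel period, exactly as in the standard asymptotics cited from \cite{Oht}; more elementarily, one splits the contour $\bar\gamma$ into a piece near the collision, whose integral is $O(\log|\delta|)$ by direct estimation of $\int (t-t_j)^{a_j/2}(t-t_{j+1})^{a_{j+1}/2}\,\cdots\,dt$ with $a_j + a_{j+1}$ possibly equal to $0$, and a piece bounded away from the collision, whose integral is $O(1)$). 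Since $F(\beta)$ is bounded near $\delta = 0$, the subtracted term $\frac{\log\delta}{\pi i}F(\beta)$ is also $O(\log|\delta|)$, so $G(\delta) = O(\log|\delta|) = o(|\delta|^{-1})$. A single-valued holomorphic function on a punctured disk with this growth has a removable singularity, by Riemann's removable singularity theorem. Hence $G$ extends holomorphically across $\delta = 0$, which is the assertion.

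\textbf{Main obstacle.} The monodromy bookkeeping in the second paragraph is clean once Lemma \ref{lemma433} is in hand, so the genuine work is the growth estimate needed for removability — in particular checking carefully that the local contribution of the pinching cycle to the Schwarz-Christoffel period is at worst logarithmic (never a genuine power-law blow-up), which hinges precisely on the alternating $\pi/2$, $-\pi/2$ angle condition so that $a_j + a_{j+1} \in \{-2, 0, 2\}$ and the worst case $a_j + a_{j+1} = 0$ produces $\int \delta/(t(t-\delta))\,dt$-type logarithms rather than poles. I would isolate that computation as the crux and keep the rest as the formal framework above.
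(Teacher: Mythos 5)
Your proposal is correct and takes essentially the same route as the paper: the paper's own proof consists of exactly two sentences, observing that the function is locally holomorphic in a punctured neighborhood of $\delta=0$ and that Lemma~\ref{lemma433} makes it single-valued there. Your additional work --- checking that $F(\beta)$ is itself holomorphic across $\delta=0$ and supplying the $O(\log|\delta|)$ growth bound so that Riemann's removable singularity theorem applies --- fills in the removability step that the paper leaves implicit, and is a genuine (if routine) improvement in completeness rather than a different method.
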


\begin{proof}  By definition,the function is 
locally holomorphic in a punctured neighborhood of $\delta=0$. By Lemma \ref{lemma433}
it extends to be single valued in a (full) neighborhood of $\delta=0$. 
\end{proof}

We will now specialize this picture to the situation at hand --- an orthodisk
where $\gamma$ represents one of the distinguished cycles $\gamma_i$. Then $F(\gamma)$ and $F(\beta)$ are either real or imaginary, and are perpendicular. 
Thus Lemma \ref{lemma424} implies that 
$|F(\gamma)|\pm \frac{\log\delta}{\pi } |F(\beta)|$
is real analytic in $\delta$ with {\em one} choice of sign. The crucial observation is now that whatever alternative holds,
the {\em opposite} alternative will hold for the {\em conjugate} orthodisk. More precisely: 

Let $F_1$ and $F_2$ be the Schwarz-Christoffel maps associated to a pair of conjugate orthodisks.
These will be defined on different but consistently labeled punctured upper half planes. Let $\delta_i$ refer to the complex parameter $\delta$ introduced above for the maps $F_i$, respectively. Then 
\begin{lemma}[Real Analyticity Alternative Lemma]\label{lemma435}

Either $| F_1(\gamma)|  -
\frac{\log\delta_1}\pi |
F_1(\beta) |$ or $| F_1(\gamma)|  + \frac{\log\delta_1}\pi
|
F_1(\beta)|$ is real analytic in $\delta_1$ for $\delta_1=0$. In the first
case,
$| F_2(\gamma)|  + \frac{\log\delta_2}\pi |
F_2(\beta) |$ is real analytic in $\delta_2$, while in the second case,
$| F_2(\gamma)|  - \frac{\log\delta}\pi |
F_2(\beta) |$ is real-analytic in $\delta_2$.
\end{lemma}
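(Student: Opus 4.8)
The plan is to combine the Analyticity Lemma \ref{lemma424} for each of the two maps $F_1, F_2$ with the reality/imaginarity structure of orthodisk periods, and then to observe that the two maps have \emph{opposite} monodromy signs because one is the conjugate of the other. First I would recall from Lemma \ref{lemma424} that $F_i(\gamma) - \frac{\log\delta_i}{\pi i}F_i(\beta)$ is single-valued and holomorphic near $\delta_i = 0$ for $i = 1,2$. Now specialize: since $\gamma$ is one of the distinguished cycles $\gamma_k$ and $\beta$ is the auxiliary cycle introduced before Lemma \ref{lemma433}, the periods $F_i(\gamma)$ and $F_i(\beta)$ are each either purely real or purely imaginary, and they are perpendicular to one another (one real, one imaginary). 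Writing $F_i(\gamma) = \epsilon_i^\gamma |F_i(\gamma)|$ and $F_i(\beta) = \epsilon_i^\beta |F_i(\beta)|$ with $\epsilon_i^\gamma, \epsilon_i^\beta \in \{1, i\}$ and $\epsilon_i^\gamma \ne \epsilon_i^\beta$, the quotient $\epsilon_i^\gamma / (i\,\epsilon_i^\beta)$ is real, and equals $+1$ or $-1$ according to which of $F_i(\gamma), F_i(\beta)$ is real. Dividing the single-valued holomorphic function of Lemma \ref{lemma424} by $i\,\epsilon_i^\beta$ and taking the real part (for $\delta_i$ real, or equivalently extracting the real-analytic part under the extension to complex $\delta_i$), I get that $|F_i(\gamma)| \pm \frac{\log\delta_i}{\pi}|F_i(\beta)|$ is real analytic in $\delta_i$ near $0$, with a definite choice of sign depending only on whether $F_i(\gamma)$ is real or imaginary.

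Next I would nail down how the sign is determined, and why it flips between the conjugate orthodisks. The key point is the conjugacy relation between $\ogup$ and $\ogdn$: the developed image of the $G^{-1}dh$ orthodisk is the complex conjugate of the developed image of the $Gdh$ orthodisk (this is exactly the conjugacy condition built into the moduli space $\Delta_g$, as explained at the end of Section \ref{sec3}). Under complex conjugation, a period that is purely real stays real while a period that is purely imaginary stays imaginary, \emph{but} the orientation of the orthodisk reverses, which is precisely the effect recorded in Lemma \ref{lemma433}: the monodromy of $F(\gamma)$ around $\delta = 0$ picks up $+2F(\beta)$ for one orthodisk and hence $-2F(\beta)$ for its mirror image. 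Equivalently, the factor $\frac{1}{\pi i}$ in Lemma \ref{lemma424} becomes $\frac{1}{\pi i}$ again but the roles of "real" and "imaginary" among $\{F(\gamma), F(\beta)\}$ are swapped between $F_1$ and $F_2$ because the vertex data (the angles $\pi/2$ versus $-\pi/2$) at the relevant vertices are interchanged when passing from $\ogup$ to $\ogdn$ — indeed, this is the whole reason we adopted the convention that the angle at $P_g$ is $3\pi/2$ in $\ogup$, so that it is $\pi/2$ in $\ogdn$. So if $F_1(\gamma)$ is real (hence $F_1(\beta)$ imaginary), giving one choice of sign for the real-analytic combination, then $F_2(\gamma)$ is imaginary (hence $F_2(\beta)$ real), giving the \emph{opposite} sign. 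Assembling these two observations yields exactly the dichotomy asserted in the statement.

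I expect the main obstacle to be the bookkeeping in the second paragraph: making precise the claim that "real" and "imaginary" are interchanged between $F_1(\gamma), F_1(\beta)$ and $F_2(\gamma), F_2(\beta)$. This requires carefully tracking, from the divisor data listed in Section \ref{sec3} and the vertex-angle conventions of Figure \ref{fig:geomcoord}, which edges $\gamma$ and $\beta$ foot on in each of the two orthodisks, and computing the phase of each Schwarz-Christoffel period integral from the parity of the number of $(t - t_k)^{a_k/2}$ factors with $a_k$ negative that lie between the two feet of the cycle. The conjugacy relation guarantees these phases are complex-conjugate pairs, but since one period in each pair is real and one imaginary, "complex conjugate" forces the phase of the \emph{real} one to be unchanged and the phase of the \emph{imaginary} one to flip sign — and because $F(\gamma)$ and $F(\beta)$ are perpendicular, exactly one of the two flips, which is what drives the sign change. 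The rest — extracting the real-analytic part, matching against the $\pm$ in the statement — is routine once this phase computation is in hand.
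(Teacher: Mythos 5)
Your first paragraph is fine and is exactly the paper's preliminary observation: since $F_i(\gamma)$ and $F_i(\beta)$ are perpendicular (one real, one imaginary), Lemma \ref{lemma424} forces $|F_i(\gamma)|\pm\frac{\log\delta_i}{\pi}|F_i(\beta)|$ to be real analytic for one choice of sign. The gap is in your second paragraph, which carries the actual content of the lemma (that the two signs are opposite). You assert that the real/imaginary roles of $F(\gamma)$ and $F(\beta)$ are \emph{interchanged} in passing from $\ogup$ to $\ogdn$. That contradicts the conjugacy condition \eqref{tag21b}, which says the period vectors of the two orthodisks are componentwise complex conjugates: a real period of $\ogup$ corresponds to an equal real period of $\ogdn$, and an imaginary period corresponds to an imaginary period of the \emph{opposite sign}; corresponding edges stay parallel (both horizontal or both vertical), they do not rotate by $\pi/2$. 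The paper's proof uses precisely this: normalize at the conjugate values so that $F_1(\gamma)=F_2(\gamma)>0$ are \emph{both} real; then $F_1(\beta)$ and $F_2(\beta)$ are \emph{both} purely imaginary with opposite signs, and it is this sign difference --- erased once one passes to $|F_i(\beta)|$ --- that flips the $\pm$ in front of $\frac{\log\delta_i}{\pi}|F_i(\beta)|$; continuity then extends the alternative to all $\delta_1,\delta_2$. Your own third paragraph in fact describes this correct mechanism (``the phase of the real one is unchanged and the phase of the imaginary one flips sign\dots exactly one of the two flips''), but that observation refutes, rather than makes precise, the real/imaginary swap claimed in paragraph two; the proposal is internally inconsistent at its key step.

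A related slip: in the first paragraph you write $F_i(\gamma)=\epsilon_i^\gamma|F_i(\gamma)|$ with $\epsilon_i^\gamma\in\{1,i\}$. The phases can equally be $-1$ or $-i$, and the sign of the imaginary period is exactly the datum that distinguishes the two conjugate orthodisks; normalizing it away at the outset discards the information on which the conclusion depends.
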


\begin{proof} We have already noted that either alternative holds in both cases.
It remains to show that it holds with opposite signs.
For some special values $\delta_1,\delta_2>0$, the two orthodisks are
conjugate. 
For instance, we can assume
that for these values, $F_1(\gamma)=F_2(\gamma)>0$. 
Then $F_1(\beta)$ and $F_2(\beta)$ are both imaginary with opposite signs,
and the claimed alternative holds for these values of $\delta_1,
\delta_2$. 
By continuity, the alternative holds for all $\delta_1$ and $\delta_2$.
\end{proof}

\begin{remark}
A concrete way of understanding the phenomenon here is that the
asymptotic expansion of the period of a curve meeting a degenerating 
cycle $\beta$, where the edge for $\beta$ has preimages
$b$ and $b+\epsilon$, has a term of the form $\pm\epsilon^k \log
\epsilon$,
where the sign relates to the geometry of the orthodisk.  
\end{remark}

\section{The Flow to a Solution}
\label{sec5}

The last part of the proof of the Main Theorem requires us to
prove the

\begin{lemma}[Regeneration Lemma]\label{lemma50} There is, for a given genus
$g$, a certain (good) locus $\SY \subset \Delta_g$
in the space $\Delta_g$ of geometric coordinates for with the
following properties:
\begin{itemize}
\item $\SY$ lies properly within the space of geometric
coordinates;
\item if $d \height=0$ at a point on the locus $\SY$, then actually
$\height=0$ at that point.
\end{itemize}
\end{lemma}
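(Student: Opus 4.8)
The plan is to construct the locus $\SY$ as a "regeneration" of the lower-genus solution $S_{g-1}$, sitting on a boundary stratum $\partial\overline{\Delta_g}$, and then to show that on $\SY$ the height function has a single non-vanishing term, all the others vanishing to second order.  First I would identify the relevant boundary stratum: as $b-c\to\infty$ (or some analogous coordinate degeneration), the central handle of the genus-$g$ orthodisk pinches off, and the limiting pair of orthodisks is precisely the $\ogup,\ogdn$ pair for the genus-$(g-1)$ Scherk surface $S_{g-1}$.  By the inductive hypothesis (Lemma \ref{lemma413} and the overall existence statement for $S_{g-1}$), there is a point $p_{g-1}\in\Delta_{g-1}\subset\partial\overline{\Delta_g}$ where all the extremal lengths $\ext_{\ogup}(\gamma_i)$, $\ext_{\ogup}(\delta)$ agree with their $\ogdn$ counterparts --- i.e.\ $\height$ vanishes there.

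Next I would define $\SY$ near $p_{g-1}$ by imposing that the "old" extremal-length matching conditions $\ext_{\ogup}(\gamma_i)=\ext_{\ogdn}(\gamma_i)$ (for $i=1,\dots,g-2$, the cycles inherited from $S_{g-1}$) and the matching for $\delta$ continue to hold, leaving only the new cycle $\gamma_{g-1}$ (the one created by the added handle) as a free, non-matching direction.  The key analytic input is that these matching conditions cut out a smooth submanifold of the correct codimension: here I would invoke Proposition \ref{prop:extanalytic} (real analyticity of extremal lengths in the geometric coordinates) together with an implicit-function-theorem argument.  For that argument I need the differential of the map $p\mapsto\big(\ext_{\ogup}(\gamma_i)-\ext_{\ogdn}(\gamma_i)\big)_{i\le g-2}$, $\ext_{\ogup}(\delta)-\ext_{\ogdn}(\delta)$ to be surjective at $p_{g-1}$; by formula \eqref{tag22}, each such differential pairs a Beltrami differential against the relevant Hubbard--Masur quadratic differential $\Phi_{\gamma_i}$, and one shows these differentials are independent using the "non-degenerate flow" mechanism sketched in Step 5c --- deforming to change one extremal length forces a compensating change in the conjugate domain, so each $\height(\gamma_i)$ can be independently decreased.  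Thus $\SY$ is a smooth locus, properly contained in $\Delta_g$ (its closure meets $\partial\overline{\Delta_g}$ exactly in a neighborhood of $p_{g-1}$), of dimension one (or whatever the count of $\gamma_{g-1}$-free directions is).

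On $\SY$, by construction, $\height=\sum_{i=1}^{g-2}\height(\gamma_i)+\height(\gamma_{g-1})+\height(\delta)$ reduces to $\height(\gamma_{g-1})$ since every other summand vanishes identically there; moreover each vanishing summand vanishes to \emph{second} order (because $\height(c)$ is built from squared differences $|e^{1/\ext_{\ogup}(c)}-e^{1/\ext_{\ogdn}(c)}|^2+|e^{\ext_{\ogup}(c)}-e^{\ext_{\ogdn}(c)}|^2$, which has a critical point wherever the two extremal lengths agree).  Hence at any point of $\SY$, $d\height=d\,\height(\gamma_{g-1})$, and the gradient of $\height$ restricted to $\SY$ equals the gradient of $\height(\gamma_{g-1})$.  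It then remains to show $\height(\gamma_{g-1})$ has no critical point on $\SY$ unless it vanishes: this is exactly the non-degenerate flow property (Step 5c) --- if $\ext_{\ogup}(\gamma_{g-1})\neq\ext_{\ogdn}(\gamma_{g-1})$, deforming $\ogup$ to decrease $\ext_{\ogup}(\gamma_{g-1})$ forces (by the conjugacy constraint, which is built into $\Delta_g$) $\ext_{\ogdn}(\gamma_{g-1})$ to increase, strictly decreasing $\height(\gamma_{g-1})$, and this deformation direction can be taken tangent to $\SY$ since it leaves the inherited matchings unchanged to first order.

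The main obstacle will be the surjectivity/transversality step used to produce $\SY$ as a smooth locus of the right dimension whose closure limits onto the inductively-given solution $S_{g-1}$ --- in particular, controlling the extremal-length functions and their derivatives \emph{uniformly} as one approaches the degenerate boundary stratum, where the Schwarz--Christoffel parametrizations become singular.  This is where the asymptotic analysis of Section \ref{sec43} (Ohtsuka's formula, the logarithmic expansions, and Lemma \ref{lemma435}) does the real work: it guarantees that the $\delta$-matching and the $\gamma_i$-matchings remain well-behaved and independent right up to the boundary, so that the implicit function theorem applies and the new locus genuinely regenerates $S_{g-1}$ into the interior of $\Delta_g$.
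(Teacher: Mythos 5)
Your proposal is correct and follows essentially the same route as the paper: the paper also defines $\SY$ by the vanishing of $\height(\gamma_1),\dots,\height(\gamma_{g-2}),\height(\delta)$, produces it by applying the implicit function theorem to the map $(X,t)\mapsto(\ext_{\ogup}(\cdot)-\ext_{\ogdn}(\cdot))$ near the inductively given solution $S_{g-1}\in\partial\overline{\Delta_g}$ (with injectivity of the differential supplied by the infinitesimal-push computations and differentiability by Gardiner--Masur), and then observes that the squared-difference form of each summand makes the matched terms vanish to second order, so only $\height(\gamma_{g-1})$ contributes to $d\height$ on $\SY$ and Lemma \ref{lemma521} forces it to be non-critical unless it vanishes. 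The only minor quibble is the identification of the degenerating coordinate: in the paper the relevant boundary stratum arises from the central vertices $P_{g-1},P_g,P_{g+1}$ coalescing (the coordinate $t\to0$), not from $b-c\to\infty$, but this does not affect the structure of your argument.
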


This locus will be defined  by the requirement that all but one
of the extremal lengths of the distinguished cycles 
of the $Gdh$ and $\frac1G dh$ orthodisks are equal.

\subsection{Overall Strategy}
\label{sec:strategy}

In this section we continue the proof of the existence 
of the surfaces $\{S_g\}$.  In the previous
sections, we defined an associated
moduli space $\Delta = \Delta_{g}$ of
pairs of conformal structures $\{\ogup, \ogdn\}$ equipped with
geometric coordinates $\vec{\bold t}=(t_i,...,t_g)$.

We defined
a height function $\height$ on the moduli space $\Delta$ and proved that
it was a proper function: as a result, there is a critical point 
for the height function in $\Delta $, and our overall goal in the 
next pair of sections is a proof that this critical point represents
a reflexive orthodisk system in $\Delta $, and hence, by 
our fundamental translation of the period problem for 
minimal surfaces into a conformal equivalence problem, a minimal
surface of the form $S_g$.
Our goal in the present section is a description of the tangent
space to the the moduli space $\Delta $: we wish to display how  
infinitesimal changes in the geometric coordinates
$\vec{\bold t}$ affect the height function.  In particular, it
would certainly be sufficient for our purposes to prove the statement

\begin{model}\label{model511} If $\vec{\bold t_0}$ is not a reflexive
orthodisk system, then there is an element $V$ of the tangent space
$T_{\vec{\bold t_0}} \Delta $ for which $D_V\height \ne 0$.
\end{model}

This would then have the effect of proving that our critical point
for the height function is reflexive, concluding the existence 
parts of the proofs of the main theorem.

We do not know how to prove or disprove this model statement in its
full generality.  On the other hand, it is not necessary for the 
proofs of the main theorems that we do so. Instead we will replace 
this statement by a pair of lemmas.

\begin{lemma}\label{lemma512} Let $\SY \subset \Delta$ 
is a real one-dimensional subspace of $\Delta $ which is defined 
by the equations 
$\height(\gamma_1)=\height(\gamma_2)=...=\height(\gamma_{g-2})=\height(\delta)=0$.
If $\vec{\bold t_0} \in \SY$ has positive
height, i.e. $\height(\vec{\bold t_0})>0$,
then there is an element $V$ of the tangent space
$T_{\vec{\bold t_0}}\SY$ for which $D_V\height \ne 0$.
\end{lemma}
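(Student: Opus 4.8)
The plan is to reduce the statement to a one‑variable sign computation involving only the single summand $\height(\gamma_{g-1})$. By the very definition of $\SY$, all of $\height(\gamma_1),\dots,\height(\gamma_{g-2})$ and $\height(\delta)$ vanish identically on $\SY$, so along $\SY$ we have simply $\height=\height(\gamma_{g-1})$, and hence $D_V\height=D_V\height(\gamma_{g-1})$ for every $V\in T_{\vec{\bold t_0}}\SY$. Since $\dim\SY=1$, the lemma is equivalent to the assertion that $\height(\gamma_{g-1})|_{\SY}$ has no critical point at $\vec{\bold t_0}$ when $\height(\vec{\bold t_0})>0$. Write $a=\ext_{\ogup}(\gamma_{g-1})$ and $b=\ext_{\ogdn}(\gamma_{g-1})$, which are real analytic on $\Delta_g$ by Proposition \ref{prop:extanalytic}, and set $f(a,b)=\left|e^{1/a}-e^{1/b}\right|^{2}+\left|e^{a}-e^{b}\right|^{2}$, so that $\height(\gamma_{g-1})=f(a,b)$; note $f\ge0$ with equality exactly when $a=b$. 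The hypothesis $\height(\vec{\bold t_0})>0$ says $a(\vec{\bold t_0})\ne b(\vec{\bold t_0})$, and after reversing the roles of $\ogup$ and $\ogdn$ if necessary we may assume $a>b$ at $\vec{\bold t_0}$.

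Differentiating, $f_a=2(e^{1/a}-e^{1/b})e^{1/a}(-a^{-2})+2(e^{a}-e^{b})e^{a}$ and $f_b=2(e^{1/a}-e^{1/b})e^{1/b}b^{-2}-2(e^{a}-e^{b})e^{b}$. When $a>b$ every summand of $f_a$ is positive and every summand of $f_b$ is negative, so $f_a(\vec{\bold t_0})>0$ and $f_b(\vec{\bold t_0})<0$ (with both signs reversed if $a<b$). Hence, for a generator $V$ of $T_{\vec{\bold t_0}}\SY$,
\[
D_V\height(\gamma_{g-1})=f_a\,D_V a+f_b\,D_V b ,
\]
and the two terms on the right have the \emph{same} sign precisely when $D_V a$ and $D_V b$ have \emph{opposite} signs. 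So the lemma follows once we show that, along $\SY$, the derivatives $D_V a$ and $D_V b$ are nonzero and of opposite sign (in particular neither vanishes, so $a$ and $b$ are automatically nonconstant along $\SY$ and $V$ has nonzero velocity).

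This last claim is the heart of the matter, and it is exactly the ``remarkable property'' of conjugate orthodisks of Step 5c, specialized to the one free cycle $\gamma_{g-1}$. Being tangent to $\SY$ forces $V$ to preserve the synchronization $\ext_{\ogup}(\gamma_i)=\ext_{\ogdn}(\gamma_i)$ for $i\le g-2$ and $\ext_{\ogup}(\delta)=\ext_{\ogdn}(\delta)$; by the argument in the proof of Lemma \ref{lemma413}, these extremal lengths determine, hence locally parametrize, the conformal structure of $\ogup$ and of $\ogdn$ separately, so along $\SY$ each of $\ogup$ and $\ogdn$ moves in a one‑parameter family of conformal structures, with $a$ (resp. $b$) a local parameter on that family. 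But $\ogup$ and $\ogdn$ are built from the \emph{same} geometric coordinates $\vec{\bold t}$ — the same finite edge lengths and the same strip width — with the angle at each finite vertex $P_j$ interchanged between $\pi/2$ and $3\pi/2$, so that the edge encircled by $\gamma_{g-1}$ sits as a ``slot'' in one orthodisk and as a ``tab'' in the other. Enlarging the conformal module around $\gamma_{g-1}$ in $\ogup$ therefore forces the shared Euclidean data to move so as to shrink the conformal module around $\gamma_{g-1}$ in $\ogdn$: quantitatively this is the sign reversal made precise by the monodromy computation of Section \ref{sec43}, in which the period of the transverse cycle $\beta$ enters the Schwarz--Christoffel expansion of $a$ and of $b$ with a logarithmic coefficient of opposite sign for the two conjugate orthodisks (Lemma \ref{lemma433}, Lemma \ref{lemma435}). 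Thus $D_V a$ and $D_V b$ have strictly opposite signs, and together with the preceding paragraph $D_V\height=D_V\height(\gamma_{g-1})\ne0$.

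I expect the main obstacle to be precisely this last step: upgrading the ``slot versus tab'' heuristic to the rigorous statement that, \emph{all along $\SY$ and not merely near $\partial\overline{\Delta_g}$}, the correspondence $a\mapsto b$ cut out by the conjugacy is strictly orientation reversing. If a clean global monotonicity is unavailable, the argument naturally splits into two regimes. Where $a$ and $b$ remain comparable and bounded away from $0$ and $\infty$, the sign reversal of the logarithmic term in Section \ref{sec43} still pins down the sign of $D_V b/D_V a$, which is all that is used above. Where $\vec{\bold t_0}$ is close to a boundary stratum on which $a$ and $b$ both tend to $0$ (or both to $\infty$) — the only way the level sets of $f$ can become nearly tangent to $\SY$, given the deliberately exponential shape of $\height(\gamma_{g-1})$ — one invokes instead Lemmas \ref{lem:mono1} and \ref{lem:mono2}, whose whole content is that $a$ and $b$ approach their common limit at genuinely different rates; this makes the curve $s\mapsto(a(s),b(s))$ traced by $\SY$ transverse to the level sets of $f$ at $\vec{\bold t_0}$, again giving $D_V\height(\gamma_{g-1})\ne0$.
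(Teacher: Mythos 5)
Your reduction is the same as the paper's: on $\SY$ the other terms of $\height$ are nonnegative and identically zero, hence contribute nothing to first order, so $D_V\height=D_V\height(\gamma_{g-1})$; and your sign computation $f_a>0$, $f_b<0$ (for $a>b$) correctly reduces the lemma to showing that $D_Va$ and $D_Vb$ are nonzero and of opposite sign. The gap is in how you propose to establish that last claim. You invoke the monodromy machinery of Section \ref{sec43} (Lemmas \ref{lemma433} and \ref{lemma435}), but that argument is an \emph{asymptotic} statement about the multivaluedness of Schwarz--Christoffel periods as a vertex separation $\delta\to0$; it controls the differing \emph{rates} at which $\epsilon$ and $\epsilon'$ degenerate near $\partial\overline{\Delta_g}$ and is used only for properness. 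It does not compute, and cannot be made to compute, the sign of the derivative of an extremal length in a given tangent direction at an interior point of $\Delta_g$. Your own fallback for the interior regime ("the sign reversal of the logarithmic term \dots still pins down the sign of $D_Vb/D_Va$") is therefore unsupported, and your fallback near the boundary (Lemmas \ref{lem:mono1}, \ref{lem:mono2}) compares limits of sequences, not derivatives along $\SY$, so it does not give transversality of $\SY$ to the level sets of $f$ either. In short, the heart of the lemma is asserted, not proved.

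What the paper actually does at this step is entirely different in mechanism: it writes down explicit edge-push diffeomorphisms $f_\e$, $f_\e^*$ (Section \ref{sec52}), computes their infinitesimal Beltrami differentials $\dot\nu$, $\dot\nu^*$, and evaluates $d\ext(\gamma)[\dot\nu]=4\re\int\Phi_\gamma\dot\nu$ via Gardiner's formula \eqref{tag22}. The Technical Lemma \ref{lemma521} then shows this integral is finite, nonzero, and of a sign determined by whether the horizontal foliation of $\Phi_\gamma$ is parallel or orthogonal to the pushed edge; the opposite signs on $\ogup$ versus $\ogdn$ come from conjugacy forcing the corresponding edge $E^*$ to move in the opposite sense. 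A secondary weakness in your argument: your claim that the constraints defining $\SY$ make $a$ a local parameter "by the argument in Lemma \ref{lemma413}" does not follow, since $\SY$ only requires the paired extremal lengths to be \emph{equal}, not \emph{fixed}; and you never rule out $D_Va=D_Vb=0$, which would make $D_V\height=0$ regardless of signs. To repair the proof you would need to import the computations of Sections \ref{sec52}--\ref{sec523} (or an equivalent first-variation formula for extremal length) rather than the monodromy lemmas.
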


\begin{lemma}\label{lemma513} There
is an analytic subspace $\SY \subset \Delta = \Delta_g$, for which 
$\SY = \{\height(\gamma_1)=\height(\gamma_2)=...=\height(\gamma_{g-2})
=\height(\delta)=0\}$.
\end{lemma}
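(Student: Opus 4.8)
The plan is to realize $\SY$ as the common zero set of $g-1$ real-analytic functions on $\Delta_g$ --- which makes the ``analytic subspace'' assertion essentially automatic --- and then to produce a point of it, and to control it near the boundary point relevant for the later flow argument, by regenerating the genus-$(g-1)$ solution $S_{g-1}$, which by induction sits on $\partial\overline{\Delta_g}$.

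First, for any cycle $c$ the number $\height(c)$ is a sum of two non-negative terms, each of which vanishes precisely when $\ext_{\ogup}(c)=\ext_{\ogdn}(c)$. Writing $d_c:=\ext_{\ogup}(c)-\ext_{\ogdn}(c)$, we therefore have
$$
\SY=\{\,d_{\gamma_1}=d_{\gamma_2}=\cdots=d_{\gamma_{g-2}}=d_\delta=0\,\}.
$$
By Proposition~\ref{prop:extanalytic} each $d_c$ is real analytic on $\Delta_g$, so $\SY$ is a real-analytic subset of $\Delta_g$; and, being the common zero set of functions continuous on $\Delta_g$, it is closed in $\Delta_g$ and therefore lies properly in it. What remains is to exhibit a point of $\SY$ and to check that $\SY$ is genuinely one-dimensional near $S_{g-1}$.

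For this I would work in a neighborhood of the boundary stratum $\Delta_{g-1}\subset\partial\overline{\Delta_g}$ carrying $S_{g-1}$ --- the stratum on which the extra handle of $S_g$ has degenerated. Such a neighborhood in $\overline{\Delta_g}$ is coordinatized by the $g-1$ geometric coordinates $u=(u_1,\dots,u_{g-1})$ of $\Delta_{g-1}$ together with a handle-opening parameter $s\ge 0$, normalized so that $\{s=0\}$ is the stratum, $\{s>0\}$ is the interior of $\Delta_g$, and $S_{g-1}$ lies at $(s,u)=(0,0)$. The cycles $\gamma_1,\dots,\gamma_{g-2},\delta$ entering the definition of $\SY$ are, up to a harmless relabeling, exactly the cycles of the genus-$(g-1)$ problem and are untouched by the opening handle; hence the functions $d_{\gamma_1},\dots,d_{\gamma_{g-2}},d_\delta$ extend real-analytically across $\{s=0\}$ and restrict there to the genus-$(g-1)$ extremal-length-difference functions. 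Since $S_{g-1}$ is reflexive by the inductive hypothesis, all these restrictions vanish at $u=0$, i.e.\ the real-analytic map $\Psi:=(d_{\gamma_1},\dots,d_{\gamma_{g-2}},d_\delta)$ satisfies $\Psi(0,0)=0$. I would then apply the analytic implicit function theorem to solve $\Psi(s,u)=0$ uniquely for $u=u(s)$ near $s=0$: this produces, for each small $s>0$, a genuine interior point of $\SY$ lying on a real-analytic arc issuing from $S_{g-1}$, so $\SY$ is non-empty and, near $S_{g-1}$, is a one-dimensional analytic subspace.

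The step I expect to be the real obstacle is the hypothesis of the implicit function theorem: the $(g-1)\times(g-1)$ partial Jacobian $\partial\Psi/\partial u$ must be invertible at $(0,0)$, i.e.\ the differences $d_{\gamma_1},\dots,d_{\gamma_{g-2}},d_\delta$ must have linearly independent differentials at the genus-$(g-1)$ solution. The proof of Lemma~\ref{lemma413} already organizes the extremal lengths of these cycles into a triangular system --- $\ext_{\ogup}(\delta)$ fixes the outermost boundary vertex, $\ext_{\ogup}(\gamma_1)$ then fixes the next, and so on down the staircase, each dependence being real analytic and strictly monotone --- so the Jacobian of the extremal-length data is triangular; what genuinely must be shown is that its diagonal entries do not vanish at the solution and that this survives passage to the differences $d_c$. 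I expect both to follow from the conjugacy-induced monotonicity underlying the Non-degenerate Flow property: deforming $\ogup$ so as to move $\ext_{\ogup}(\gamma_i)$ forces $\ext_{\ogdn}(\gamma_i)$ to move the opposite way, which is exactly the sign information needed to keep the relevant Jacobian non-singular. A subsidiary technical point, which also deserves care, is the claim that the $d_c$ extend analytically --- not merely continuously --- across the handle-pinching stratum: the extremal lengths of other cycles blow up there, so one must verify that the cycles of $\SY$ do not feel the node and choose $s$ to be a suitable local conformal coordinate near it.
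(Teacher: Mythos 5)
Your overall strategy coincides with the paper's: realize $\SY$ as the common zero locus of the real-analytic differences $d_c=\ext_{\ogup}(c)-\ext_{\ogdn}(c)$ for $c\in\{\gamma_1,\dots,\gamma_{g-2},\delta\}$, and regenerate it from the reflexive point $S_{g-1}\in\partial\overline{\Delta_g}$ by applying the implicit function theorem to the map $\Psi$ of extremal-length differences on a collar $\Delta_{g-1}\times[0,\epsilon)$. The paper verifies exactly the two hypotheses you list: differentiability across the stratum (via Gardiner--Masur, using precisely your observation that the chosen cycles avoid the pinching locus and stay in a single regular component of the noded surface), and invertibility of $d\Psi$ restricted to $T_{S_{g-1}}\Delta_{g-1}$.

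The one place where your write-up stops short of a proof is the step you yourself identify as the obstacle: non-singularity of the Jacobian. You propose to extract it from the triangular structure in the proof of Lemma~\ref{lemma413} and then ``expect'' the diagonal entries to be nonzero --- but that expectation is the entire content of the step, and the triangularity of Lemma~\ref{lemma413} concerns the map from boundary positions to the extremal lengths of $\ogup$ \emph{alone}; it does not by itself control the differences $d_c$, which couple $\ogup$ and $\ogdn$ through conjugacy. The paper argues differently and more directly: $d\Psi$ has no kernel because any nonzero tangent direction in $T_{S_{g-1}}\Delta_{g-1}$ (after possibly reversing sign) pushes some finite edge into $\ogup$ while conjugacy forces the corresponding edge of $\ogdn$ outward, and the infinitesimal-push computation of Lemma~\ref{lemma521} (built on the explicit Beltrami differentials of Sections~\ref{sec52} and~\ref{sec53}) yields a first-order decrease of the associated extremal length on one orthodisk and a first-order increase on the other, whence $d\Psi(\nu)$ is bounded below by $c|\nu|$ with $c>0$. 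So the mechanism you name --- sign opposition forced by conjugacy --- is the right one, but to close the argument it must be run through Lemma~\ref{lemma521} rather than through the monotonicity bookkeeping of Lemma~\ref{lemma413}. A second, smaller point: the zero locus of analytic functions is an analytic \emph{set} but not automatically a one-dimensional submanifold away from the regeneration point; the paper handles the interior by the same non-vanishing-derivative assertion, so your argument should be explicit that the Jacobian condition is invoked there as well.
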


Given these lemmas, the proof of the existence 
of a pair of conformal orthodisks $\{\ogup, \ogdn\}$ is
straightforward.

\begin{proof}{Proof of Existence of Reflexive Orthodisks.} Consider the locus
$\SY$ guaranteed by Lemma \ref{lemma513}. By Theorem~\ref{thm421}, the height 
function $\height$ is proper on $\SY$, so the height function
$\height|_\SY$ has a critical point (on $\SY$). By Lemma \ref{lemma512}, this
critical point represents a point of $\height=0$, i.e a reflexive
orthodisk by Lemma \ref{lemma413}.
\end{proof}

The proof of Lemma \ref{lemma512} occupies the current section while the
proof of Lemma \ref{lemma513} is given in the following section.

{\it Remarks on deformations of conjugate pairs of orthodisks.}
Let us discuss informally the proof of Lemma \ref{lemma512}.
Because angles of corresponding vertices in the
$\ogup\leftrightarrow \ogdn$ correspondence sum to $0 \pmod{2\pi}$, the
orthodisks fit together along corresponding
edges, so conjugacy of orthodisks requires corresponding edges to move
in different directions: if the  edge $E$ on $\ogup$
moves ``out,'' the corresponding edge $E^*$ on $\ogdn$
edge moves ``in'', and vice versa (see the figures below). Thus 
we expect that if $\g$ has an endpoint on $E$, then
one of the extremal lengths of $\gamma$ decreases,
while the other extremal length of $\gamma$ on the other orthodisk 
would increase: this will force the height $\height(\gamma)$ of
$\gamma$ to have a definite sign, as desired.  This is the 
intuition behind Lemma \ref{lemma512}; a rigorous argument requires us
to actually compute derivatives of relevant extremal lengths
using the formula \ref{tag22}. We do this by displaying, fairly explicitly, 
the deformations of the orthodisks (in local coordinates on $\ogup/\ogdn$)
as well as the differentials of extremal lengths, also in 
coordinates. After some preliminary notational description in 
section \ref{sec52}, we do most of the computing in section \ref{sec53}.  Also in section
\ref{sec53} is the key technical lemma, which relates the formalism 
of formula \ref{tag22}, together with the local coordinate descriptions
of its terms, to the intuition we just described.

\subsection{Infinitesimal pushes}
\label{sec52}

We need to formalize the previous discussion. As always we are
concerned with relating the Euclidean geometry of the 
orthodisks (which corresponds directly with the periods of the
\wei data) to the conformal data of the domains $\ogup$ and $\ogdn$.
From the discussion above, it is clear that the allowable 
infinitesimal motions in $\Delta$, which are parametrized in
terms of the Euclidean geometry of 
$\ogup$ and $\ogdn$,  are given by infinitesimal
changes in lengths of finite sides,
with the changes being done simultaneously on $\ogup$ and $\ogdn$
to preserve conjugacy. The link to the conformal geometry is
the formula  \ref{tag22}: a motion which infinitesimally transforms 
$\ogup$, say, will produce an infinitesimal change in the 
conformal structure. This tangent vector to the moduli space
of conformal structures is represented by a Beltrami differential.
Later, formula  \ref{tag22} will be used, together with knowledge of the
cotangent vectors $ d \ext_{\ogup}(\cdot)$ and $ d \ext_{\ogdn}(\cdot)$,
to determine the derivatives of the relevant extremal lengths, hence
the derivative of the height.

To begin, we explicitly compute the effect of infinitesimal pushes of 
certain edges on the extremal lengths
of relevant cycles. 
This is done by explicitly displaying the infinitesimal deformation
and then using this formula to compute the sign of the derivative
of the extremal lengths, using formula \ref{tag22}.
There will be two 
different cases to consider.

\begin{enumerate}
\item[Case A.] Finite {\it non-central} edges 
of the type $P_{i-1}P_i$
for $i<g$.
\item[Case B.] An edge (finite or infinite)
and its symmetric side meet in a
corner, for instance $P_{g-1}P_g$. 
\end{enumerate}

For each case there are two subcases, which we can describe as
depending on whether the given sides are horizontal or vertical.
The distinction is, surprisingly, a bit important, as together
with the fact that we do our deformations in pairs, it provides 
for an important cancelation of (possibly) singular terms in 
Lemma \ref{lemma521}. We defer this point for later, while here we 
begin to calculate the relevant Beltrami differentials in the 
cases.

While logically it is 
conceivable that
each infinitesimal motion might require two different types
of cases, depending on whether the edge we are deforming 
on $\ogup$ corresponds on $\ogdn$ to an edge of the same type
or a different type, in fact this issue does not
arise for the particular case of the Scherk surfaces we
are discussing in this paper.  By contrast, it does arise
for the generalized Costa surfaces we discussed in
\cite{ww2}.

Case A. Here the computations are quite analogous to those that we found in 
\cite{ww1}; they differ only in orientation of the boundary of the orthodisk.
We include them for the completeness of the exposition.

\begin{figure}[h] 
\centering
\includegraphics[width=3.5in]{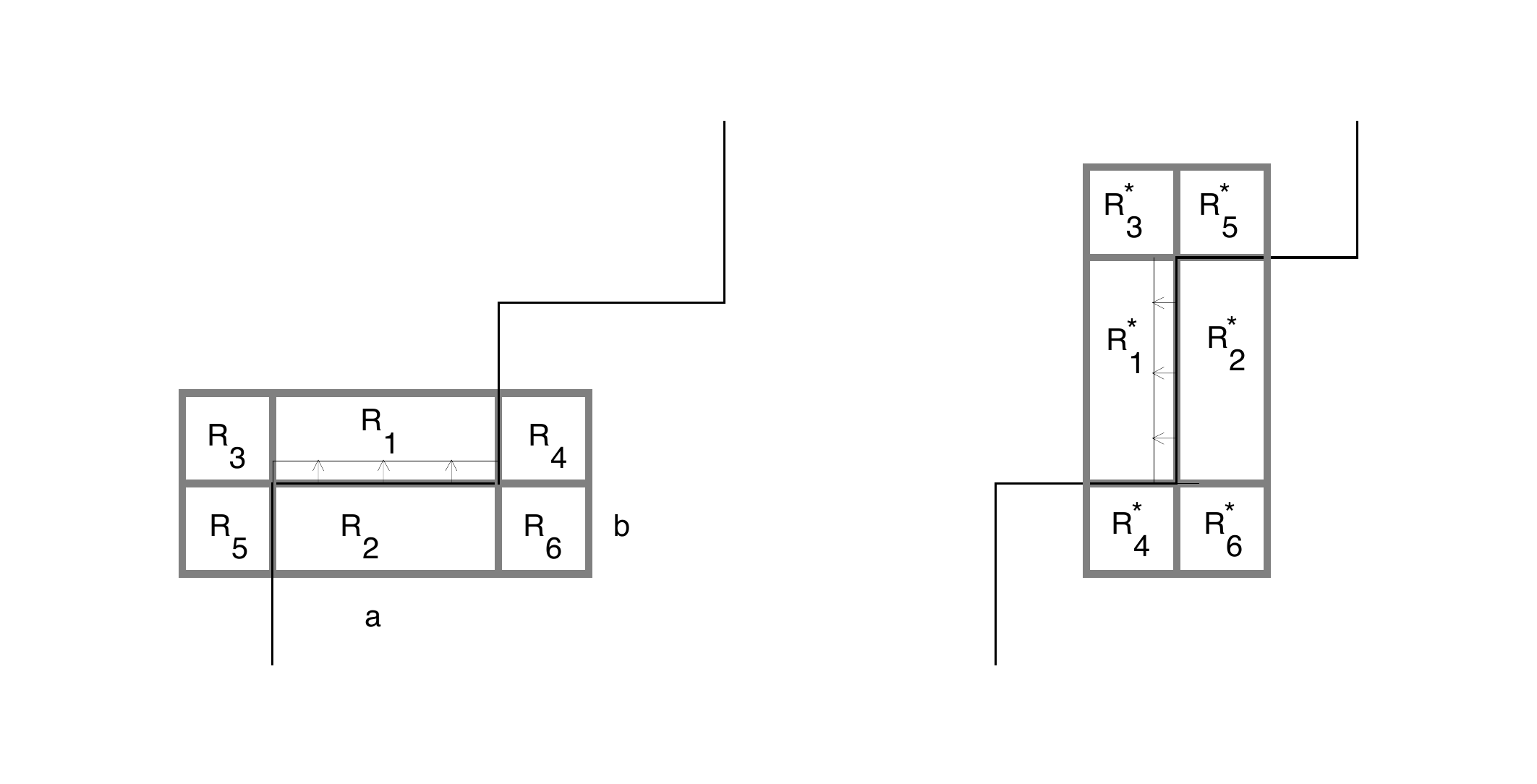} 
\caption{Beltrami differential computation --- case A }
\label{fig:casea}
\end{figure}

We first consider the case of a horizontal finite side; as in the
figure above, we see that the
neighborhood of the horizontal side of the orthodisk in the plane
naturally divides into six regions which we label $R_1$,...,$R_6$.
Our deformation $f_\e = f_{\e, b, \delta}$
differs from the identity only in such a neighborhood, 
and in each of the six regions, the map
is affine.
In fact we have a two-parameter family of these deformations, all
of which have the same infinitesimal effect, with the parameters
$b$ and $\delta$ depending on the dimensions of the supporting
neighborhood.

\begin{equation}
\label{tag51a}
f_\e(x,y) = \begin{cases}
\left(x,\e +\f{b-\e}b y\right),   &\{-a\le x\le a, 0\le y\le b\}=R_1\\
\left(x,\e +\f{b+\e}b y\right),   &\{-a\le x\le a, -b\le y\le0\}=R_2\\
\left(x,y+\f{\e+\f{b-\e}b y-y} \delta(x+ \delta +a)\right),   &\{-a-\delta\le x\le-a,
0\le y\le b\} = R_3\\
\left(x,y-\f{\e+\f{b-\e}b y-y} \delta(x-\delta-a)\right),   &\{a\le x\le a+ \delta,
0\le y\le b\} = R_4\\
\left(x,y+\f{\e+\f{b+\e}b y-y} \delta(x+ \delta +a)\right),   &\{-a-\delta\le x\le-a,
-b\le y\le0\} = R_5\\
\left(x,y-\f{\e+\f{b+\e}b y-y} \delta(x-\delta-a)\right),   &\{a\le x\le a+ \delta,
-b\le y\le0\} = R_6\\
(x,y)   &\text{otherwise}
\end{cases}
\end{equation}
where we have defined the regions $R_1,\dots,R_6$ within the
definition of $f_\e$. Also note that here the orthodisk contains the arc
$\{(-a,y)\mid0\le y\le b\}\cup\{(x,0)\mid-a\le x\le a\}\cup
\{(a,y)\mid-b\le y\le0\}$.
Let $E$ denote the edge being pushed, defined above as $[-a,a]\times\{0\}$.

Of course $f_\e$ differs from the identity only on a neighborhood
of the edge $E$, so that $f_\e$ takes the symmetric orthodisk to
an asymmetric orthodisk. 
We next modify $f_\e$ in a neighborhood of the reflected
(across the $y=-x$ line) segment $E^*$ in an analogous way with a
map $f^*_\e$ so that $f^*_\e\circ f_\e$ will preserve
the symmetry of the orthodisk. 

Our present conventions are that the edge $E$ is horizontal; this forces
$E^*$ to be vertical and we now write down $f^*_\e$ for such a
vertical segment; this is a straightforward extension of the
description of $f_\e$ for a horizontal side, but we present the
definition of $f^*_\e$ anyway, as we are crucially interested in
the signs of the terms. So set
\begin{equation}
\label{tag51b}
f^*_\e = \begin{cases}
\left(-\e +\f{b-\e}bx, y\right),   &\{-b\le x\le0, -a\le y\le a\}=R^*_1\\
\left(-\e +\f{b+\e}bx, y\right),   &\{0\le x\le b, -a\le y\le a\}=R^*_2\\
\left(x - \f{-\e+\f{b-\e}b x-x} \delta(y-\delta-a), y\right),   &\{-b\le x\le0, a\le
y\le a+ \delta\}=R^*_3\\
\left(x + \f{-\e+\f{b-\e}b x-x} \delta(y+ \delta +a), y\right),   &\{-b\le x\le0,
-a-\delta\le y\le-a\}=R^*_4\\
\left(x - \f{-\e+\f{b+\e}b x-x} \delta(y-\delta-a), y\right),   &\{0\le x\le b, a\le
y\le a+ \delta\}=R^*_5\\
\left(x + \f{-\e+\f{b+\e}b x -x} \delta(y+ \delta +a), y\right),   &\{0\le x\le b, -a-\delta\le
y\le-a\}=R^*_6\\
(x,y)   &\text{otherwise}
\end{cases}
\end{equation}
Note that under the reflection across the line $\{y=-x\}$, the
region $R_i$ gets taken to the region $R_i^*$.

Let $\nu_\e = \f{\left(f_\e\right)_{\bar z}}{\left(f_\e\right)_z}$
denote the Beltrami differential of $f_\e$, and set
$\dot\nu=\f d{d\e}\bigm|_{\e=0}\nu_\e$. Similarly, let $\nu^*_\e$
denote the Beltrami differential of $f^*_\e$, and set $\dot\nu^*=\f
d{d\e}\bigm|_{\e=0}\nu^*_\e$. Let $\dot\mu=\dot\nu+\dot\nu^*$. Now
$\dot\mu$ is a Beltrami differential supported in a bounded domain
in one of the domains $\ogup$ or $\ogdn$. 
We begin by observing that it is easy to compute that
$\dot\nu=[\f d{d\e}\bigm|_{\e=0}\left(f_\e\right)]_{\bar z}$ 
evaluates near $E$
to

\begin{equation}
\label{tag52a}
\dot\nu = \begin{cases}
\f1{2b},   &z\in R_1\\
-\f1{2b},   &z\in R_2\\
\f1{2b}[x+ \delta +a]/\delta +i\left(1-y/b\right)\f1{2 \delta}=\f1{2b \delta}(\bar z+ \delta +a+ib),
&z\in R_3\\
-\f1{2b}[x-\delta-a]/\delta-i\left(1-y/b\right)\f1{2 \delta}=\f1{2b \delta}(-\bar z+ \delta +a-ib),
&z\in R_4\\
-\f1{2b}[x+ \delta d+a]/\delta +i\left(1+y/b\right)\f1{2 \delta}=\f1{2b \delta}(-\bar z-\delta-a+ib),
&z\in R_5\\
\f1{2b}[x-\delta-a]/\delta-i\left(1+y/b\right)\f1{2 \delta}=\f1{2b \delta}(\bar z-\delta-a-ib),
&z\in R_6\\
0  &z\notin\supp(f_\e-\id)
\end{cases}
\end{equation}

We further compute
\begin{equation}
\label{tag52b}
\dot\nu^* = \begin{cases}
-\f1{2b},   &R^*_1\\
\f1{2b},   &R^*_2\\
\f1{2b \delta}(i\bar z-\delta-a+bi)    &R^*_3\\
\f1{2b \delta}(-i\bar z-\delta-a-bi)   &R^*_4\\
\f1{2b \delta}(-i\bar z+ \delta +a+bi)   &R^*_5\\
\f1{2b \delta}(i\bar z+ \delta +a-bi)    &R^*_6
\end{cases}
\end{equation}

Case B. We have separated this case out for purely expositional 
reasons. We can imagine that the infinitesimal push that 
moves the pair of consecutive sides along the symmetry line
$\{y=-x\}$ is the result of a composition of a pair of 
pushes from Case A, i.e. our diffeomorphism $F_{\e;b, \delta}$
can be written  $F_{\e;b, \delta}= f_{\e} \circ f_{\e}^*$,
where the maps differ from the identity in the {\it union}
of the supports of $\dot\nu_{b, \delta}$ and 
$\dot\nu_{b, \delta}^*$.

It is an easy consequence of the chain rule applied to this 
formula for  $F_{\e;b, \delta}$ that the infinitesimal Beltrami differential
for this deformation is the sum $\dot\nu_{b, \delta} + \dot\nu_{b, \delta}^*$
of the
infinitesimal Beltrami differentials 
$\dot\nu_{b, \delta}$ and 
$\dot\nu_{b, \delta}^*$ defined in formulae \ref{tag52a}, \ref{tag52b}
for Case A (even in a neighborhood of the vertex along the
diagonal where the supports of the differentials $\dot\nu_{b, \delta}$ and 
$\dot\nu_{b, \delta}^*$ coincide).

\subsection{Derivatives of Extremal Lengths}
\label{sec53}

In this section, we combine the computations of $\dot\nu_{b, \delta}$
with formula \ref{tag22} (and its background in section \ref{sec2})
and some easy observations on the nature of the quadratic
differentials $\Phi_{\mu}= \frac12 d \ext_{(\cd)}(\mu)\bigm|_{\cd}$ 
to compute the derivatives of extremal lengths under our
infinitesimal deformations of edge lengths.

We begin by recalling some background from section \ref{sec2}. If we are given
a curve $\gamma$, the extremal length of that curve on an orthodisk,
say $\ogup$, is a real-valued
$C^1$ function on the moduli space of that orthodisk.
Its differential is then a holomorphic quadratic differential
$\Phi_{\gamma}= \frac12d\ext_{(\cd)}(\gamma)\bigm|_{\ogup}$
on that orthodisk; the horizontal foliation of $\Phi_{\g}$ consists of
curves which connect the same edges in $\ogup$ as $\g$,
since $\Phi_{\g}$ is obtained as the pullback of the quadratic
differential $dz^2$ from a rectangle where $\g$ connects
the opposite vertical sides.
We compute the derivative of the 
extremal length function using formula \ref{tag22}, i.e.

$$
\left(d\ext_{\cd}(\gamma)\bigm|_{\ogup}\right)[\nu] =
4\re\int_{\ogup}\Phi_\gamma\nu
$$

It is here where we find that we can actually compute the 
sign of the derivative of the extremal lengths, hence the height
function, but also encounter a subtle technical problem. The point is
that we will discover that
just the topology of the curve $\gamma$ on $\ogup$
will determine the sign of the derivative on an edge $E$, so we will be able to
evaluate the sign of the integral above, if we shrink the support of the
Beltrami differential $\dot\nu_{b,\delta}$ to the edge by sending
$b, \delta $ to zero.  (In particular, the sign of 
$\Phi_\gamma$ depends precisely on whether the foliation of $\Phi= \Phi_\gamma$
is parallel or perpendicular to $E$, and on whether $E$ is
horizontal or vertical.) We then need to know two things:
1) that this limit exists, and
2) that we may know its sign via examination of the sign of 
$\dot\nu_{b, \delta}$ and $\Phi_\gamma$ on the edge $E$. We phrase this as

\begin{lemma}
\label{lemma521} 
{\rm(1)} $\lim_{b\to0, \delta\to0}\re\int\Phi\dot\nu$
exists, is finite and non-zero. {\rm(2)} The (horizontal) foliation of $\Phi = \Phi_{\g}$ is
either parallel or orthogonal to the segment which is
$\lim_{b\to0, \delta\to0}(\supp\dot\nu)$, and {\rm(3)} The expression
$\Psi\dot\nu$ has a constant sign on the that segment $E$, and the
integral \ref{tag22} also has that (same) sign.
\end{lemma}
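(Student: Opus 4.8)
\section*{Proof proposal}

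The plan is to get (2) from the structure of the extremal--length quadratic differential $\Phi=\Phi_\g$, and then to read off (1) and (3) by inserting the explicit Beltrami differentials \eqref{tag52a}--\eqref{tag52b} into formula~\eqref{tag22} and letting $b,\delta\to0$. For (2): the metric $|\Phi_\g|$ is, up to scale, the extremal metric for $\g$, realizing $\ogup$ (or, for a union cycle $\g_i$, the relevant subpiece) as a Euclidean rectangle via a conformal map $h$ whose corners sit at orthodisk vertices. Hence $\Phi_\g$ is zero--free on every open edge, in particular on the open segment $E=\lim_{b,\delta\to0}\supp\dot\nu$. Take the flat coordinate $z$ of $|Gdh|$ (resp.\ $|G^{-1}dh|$) with $E\subset\{\im z=0\}$ and write $\Phi_\g=\varphi(z)\,dz^2$ near $E$; since $h(E)$ is a coordinate--axis--parallel arc of $\partial(\text{rectangle})$, one has $\varphi|_E=(h')^2=\pm|h'|^2$, so $\varphi$ is real on $E^\circ$ and, being zero--free on that connected set, of a single sign. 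If $\g$ encircles (or runs alongside) $E$ the horizontal foliation of $\Phi_\g$ is parallel to the flat geodesic $E$; if $\g$ foots on $E$ it meets $E$ orthogonally. This is the dichotomy in (2) and the constant--sign half of (3).

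For (1) and (3): by~\eqref{tag22} the object of interest is $\re\int_{\ogup}\Phi_\g\dot\nu=\tfrac14 D_{\dot\nu}\ext_{\ogup}(\g)$. Using \eqref{tag52a}--\eqref{tag52b} one splits $\supp\dot\nu_{b,\delta}$ into the core strip $R_1$ --- a rectangle of width $b$ along $E$ on which $\dot\nu$ is the real constant $\pm\tfrac1{2b}$ --- and the transition wedges $R_3,\dots,R_6$, each of area $O(b\delta)$ carrying $|\dot\nu|=O(\sqrt{b^2+\delta^2}/b\delta)$. The wedges contribute $O(\sqrt{b^2+\delta^2})=o(1)$ away from the vertices, while on the core strip
$$\re\int_{R_1}\Phi_\g\dot\nu=\frac1{2b}\int_{R_1}\re\varphi\,dx\,dy\ \longrightarrow\ \tfrac12\int_E\varphi(x)\,dx\qquad(b,\delta\to0),$$
an expression with a definite sign (that of $\varphi$ on $E$) which would be finite and nonzero were $\varphi$ integrable over $E$. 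It is not: at an endpoint of $E$ of angle $3\pi/2$ one has $\Phi_\g\sim(z-P)^{-4/3}dz^2$, so both the core--strip and the adjacent--wedge contributions near $P$ blow up at rate $b^{-1/3}$ --- these are the ``(possibly) singular terms''.

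The cure, and the reason the pushes are done in pairs, is that the deformation is $f_\e\circ f^*_\e$ with infinitesimal Beltrami differential $\dot\mu=\dot\nu+\dot\nu^*$, where $f^*_\e$ pushes the edge $E^*$ obtained by reflecting $E$ in the diagonal $\{y=-x\}$. Since that reflection interchanges horizontal and vertical edges, the core--strip constant for $f^*_\e$ is $\mp\tfrac1{2b}$ --- the \emph{opposite} sign --- and the sign of $\varphi$ along $E^*$ is likewise opposite to its sign along $E$; the upshot is that in $\re\int\Phi_\g(\dot\nu+\dot\nu^*)$ the two order--$b^{-1/3}$ contributions cancel while the two finite main terms add, so $\lim_{b,\delta\to0}\re\int\Phi_\g\dot\mu$ exists, is finite and nonzero, with a sign fixed by whether $\g$ encircles or foots on $E$ --- i.e.\ by the parallel/orthogonal alternative of (2). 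This yields (1) and (3); the Case B edges, where $E$ and $E^*$ share a vertex on the diagonal, are handled identically, the cancellation now occurring at that common vertex.

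The crux --- and the only non--routine step --- is the control of the non--integrable $(z-P)^{-4/3}$ behaviour of $\Phi_\g$ at the reflex vertices and the verification that the order--$b^{-1/3}$ terms of the paired pushes cancel, i.e.\ the ``exists and is finite'' assertion of (1). The remaining ingredients --- the $o(1)$ bound on the wedges, the passage to the limit on the core strip, and the bookkeeping of signs across the four horizontal/vertical $\times$ parallel/orthogonal subcases --- are straightforward once \eqref{tag52a}--\eqref{tag52b} and statement (2) are in place.
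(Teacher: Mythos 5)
Your outline matches the paper's in its large-scale structure: statement (2) from the foliation structure of $\Phi_\g$, the splitting of $\supp\dot\nu$ into a core strip plus transition wedges, the identification of the $(i\z)^{-4/3}$ singularity at the reflex ($3\pi/2$) vertices as the only obstruction to finiteness, and the recognition that the pairing of the push on $E$ with the push on the reflected edge $E^*$ is what must produce the cancellation; you also get the $b^{-1/3}$ divergence rate right. However, the step you yourself flag as ``the crux and the only non-routine step'' --- the cancellation of the divergent contributions at the paired vertices --- is asserted rather than proved, and the mechanism you offer for it cannot work. You argue that $\dot\nu^*$ carries the opposite core-strip constant and that $\varphi$ has the opposite sign along $E^*$, and you use precisely this double sign flip to conclude that the two \emph{finite} main terms add (same-signed products). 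But $\varphi$ is single-signed on each open edge all the way up to the vertex, so within your own framework the divergent pieces of $\int_E\varphi$ and of $\int_{E^*}\varphi^*$ carry the same signs as the respective main terms; the same double sign flip would therefore make the divergent contributions \emph{add} as well. A comparison of real signs along the edges cannot simultaneously yield ``main terms reinforce'' and ``divergent terms cancel.''

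The paper's cancellation is of a different, genuinely complex-analytic nature. Doubling $\ogup$ to the sphere $S_{Gdh}$, the lift $\Psi$ of $\Phi$ has simple poles at the lifts of $P$ and $P^*$ with coefficients $c\,d\om^2/\om$ and $-c\,d\om^{*2}/\om^*$; pulling back through the uniformizers $\om=(iz)^{2/3}$ and $\om^*=(z^*)^{2/3}$ (the $i$ recording that one edge is horizontal and the other vertical) shows that the \emph{singular parts} of the coefficients satisfy $\Phi(z)=\ov{\Phi(z^*)}$ under the reflection $z\mapsto z^*$, while the explicit relation $f^*_\e(z^*)=-\ov{f_\e(z)}$ gives $\dot\nu^*(z^*)=\ov{\dot\nu(z)}$. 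Since the reflection reverses orientation, the sum of the two local contributions near $P$ and $P^*$ takes the form $\re\int\bigl(w-\ov w\bigr)$ for the singular parts, which vanishes identically, leaving an $O(b+\delta)$ remainder; the bounded regular parts of $\Phi$, which produce the main terms, are unaffected. It is this exact conjugation identity, not a real-sign bookkeeping on the edges, that kills the $b^{-1/3}$ terms, and without it your argument for part (1), and hence for part (3), is incomplete.
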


Of course, in the statement of the lemma, the horizontal
foliation of the holomorphic quadratic differential
$\Phi=\Phi_\gamma$ has regular curves parallel to $\g$.

This lemma provides the rigorous foundation for the intuition
described in the final paragraph of strategy section \ref{sec:strategy}.

\subsection{Proof of the Technical Lemma \ref{lemma521}}
\label{sec523}

\begin{proof}
Let $S_{Gdh}$ denote the double of $\ogup$ across the boundary; the 
metric space $S_{Gdh}$ is a flat sphere with conical singularities,
two of which are metric cylinders. 

The foliation
of $\Phi$, on say $\ogup$, lifts to a foliation on the punctured
sphere, 
symmetric about the reflection about the equator. This
proves the second statement. The third statement follows from the
first
(and from the above discussion
of the topology of the vertical foliation of $\Phi_{\g}$), 
once we prove that there is no infinitude 
of $\int \Phi\dot{\nu}$ as $b,\delta \to 0$ coming from either
the neighborhood of infinity of the infinite edges or the regions $R_3$
and $R_4$ for the finite vertices. This finiteness will
follow from the proof of the first statement. Thus, we are left to prove the
first statement which requires us once again to consider the 
cases A and B.

Case A:  Suppose $\g$ connects two non-central
finite edges $E'$ and $E''$ on
$\ogup$.
To understand the singular behavior of $\Phi= \Phi_{\g}$ near a vertex of
the orthodisk, say $\ogup$, we begin by observing 
(by formula \ref{tag22}) that on a preimage on
$S_{Gdh}$ of such a vertex, the lifted
quadratic differential, say $\Psi$,  has a simple
pole. 
This is consistent with the nature of the foliation of $\Psi$, whose
non-singular horizontal leaves are all freely homotopic 
to the lift of $\g$; the fact itself follows from following
the lift of the canonical quadratic differential on a rectangle.
Thus the singular leaves of $\Psi$
are segments on the equator of the sphere connecting lifts
of endpoints of the edges $E'$ and $E''$.

Now let $\om$ be a local
uniformizing parameter near the preimage of the 
vertex on $S_{Gdh}$ and $\z$ a local uniformizing
parameter near the vertex of $\ogup$ on $\BC$. 
There are two cases to
consider, depending on whether the angle in $\ogup$ at the vertex
is  $3\pi/2$ or $\pi/2$. In the first case, the map from
$\ogup$ to a lift of $\ogup$ in $S_{Gdh}$ is given in coordinates
by $\om=(i\z)^{2/3}$, and in the second case by $\om=\z^2$. Thus, in
the first case we write $\Psi=c\f{d\om^2}\om$ so that $\Phi
=-4/9c(i\z)^{-4/3}d\z^2$, and in the second case we write $\Phi
=4cd\z^2$; in both cases, the constant $c$ is real with sign
determined by the direction of the foliation.

With these expansions for $\Phi$, we can compute
$\lim_{b\to0, \delta\to0}\re\int\Phi\dot\nu$.

Clearly, as $b+ \delta\to0$, as $|\dot\nu|=O\left(\max\left(\f1b,\f1 \delta\right)\right)$, we need
only concern ourselves with the contribution to the integrals of the
singularity at the vertices of $\ogup$ with angle $3\pi/2$.

To begin this analysis, recall that we have assumed that the edge
$E$ is
horizontal so that $\ogup$ has a vertex angle of $3\pi/2$ at the
vertex,
say $P$. This means that $\ogup$ also has a vertex angle of $3\pi/2$ at the
reflected vertex, say $P^*$,  on $E^*$. 
It is convenient to rotate a neighborhood of
$E^*$ through an angle of $-\pi/2$ so that the support of
$\dot\nu$ is a reflection of the support of $\dot\nu^*$ (see
equation \ref{tag51a} through a vertical line. If the coordinates of
$\supp\dot\nu$ and $\supp\dot\nu^*$ are $z$ and $z^*$, respectively
(with $z(P)=z^*(P^*)=0$), then the maps which lift
neighborhoods of $P$ and $P^*$, respectively, to the
sphere $S_{Gdh}$ are given by
$$z\mapsto(iz)^{2/3}=\om \quad\hbox{ and}\quad z^*\mapsto(z^*)^{2/3}=\om^* .$$
Now the
poles on $S_{Gdh}$ have coefficients $c\f{d\om^2}\om$ and
$-c\f{d\om^{*2}}{\om^*}$,
respectively, so we find that when we pull back these poles from
$S_{Gdh}$ to $\ogup$, we have
$\Phi(z)=-\f49c\ {dz^2}/{\om^2}$ while
$\Phi(z^*)=-\f49c\ dz^2/(\om^*)^2$ in the coordinates $z$
and $z^*$ for $\supp\dot\nu$ and $\supp\dot\nu^*$, respectively.
But by tracing through the conformal maps $z\mapsto\om\mapsto\om^2$ on
$\supp\dot\nu$ and $z^*\mapsto\om^*\mapsto(\om^*)^2$, we see that
if $z^*$ is the reflection of $z$ through a line, then
$$\f1{(\om(z))^2}=1/{\ov{\om^*(z^*)^2}}$$
so that the coefficients
$\Phi(z)$ and $\Phi(z^*)$ of
$\Phi=\Phi(z)dz^2$ near $P$ and of
$\Phi(z^*)dz^{*2}$ near $P^*$ satisfy
$\Phi(z)=\ov{\Phi(z^*)}$, at least for the singular
part of the coefficient.

On the other hand, we can also compute a relationship between the
Beltrami coefficients $\dot\nu(z)$ and $\dot\nu^*(z^*)$ (in the
obvious notation) after we observe that $f_\e^*(z^*)=-\ov{f_\e(z)}$.
Differentiating, we find that
\[
\begin{split}
\dot\nu^*(z^*)  &= \dot f^*(z^*)_{\ov{z^*}}\\
&=-\ov{\dot f(z)}_{\ov{z^*}}\\
&=(\ov{\dot f(z)})_z\\
&= \ov{\dot f(z)_{\ov z}}\\
&= \ov{\dot\nu(z)}.
\end{split}
\]
Combining our computations of $\Phi(z^*)$ and $\dot\nu(z^*)$
and using that the reflection $z\mapsto z^*$ {\it reverses} orientation,
we find that (in the coordinates $z^*=x^*+iy^*$ and $z=x+iy$) for
small neighborhoods $N_{\k}(P)$ and $N_{\k}(P^*)$ of $P$
and $P^*$ respectively,
\[
\begin{split}
\re &\int\lm_{\supp\dot\nu\cap N_\k(P)}\Phi(z)
\dot\nu(z)dxdy +
\re\int\lm_{\supp\dot\nu^*\cap N_\k(P^*)}\Phi(z^*)
\dot\nu(z^*)dx^*dy^*\\
&= \re\int\lm_{\supp\dot\nu\cap N_\k(P)}\Phi(z)\dot\nu(z) -
\Phi(z^*)\dot\nu(z^*)dxdy\\
&= \re\int\lm_{\supp\dot\nu\cap N_\k}\Phi(z)\dot\nu(z) -
\ov{[\Phi(z)+O(1)]}\ \ov{\dot\nu(z)} dxdy\\
&= O(b+ \delta)
\end{split}
\]
the last part following from the singular coefficients summing to a
purely imaginary term while $\dot\nu=O\left(\f1b+\f1 \delta\right)$,
and the neighborhood has area $b\delta$. This
concludes the proof of the lemma for this case.

Case B. Here we need only consider the singularities resulting at
the origin, as we treated the other singularities
in Case A.  The lemma in this case follows
from a pair of observations.  First, because of the symmetry across
the line through the vertex under discussion, the differential
$\Psi$ (the lift of $\Phi$) on the sphere is holomorphic, and so the
behavior of $\Phi$ near the vertex is at least as regular as in the
previous cases.  Moreover, because the infinitesimal
Beltrami differential in this case is the sum of infinitesimal Beltrami
differentials encountered in the previous cases A, 
the arguments there on the cancellation of the apparent singularities
of the sum $\dot\nu_{b, \delta} + \dot\nu_{b, \delta}^*$
continue to hold here for the single singularity.

This concludes the proof of Lemma \ref{lemma521}.
\end{proof}

\begin{proof}{Conclusion of the proof of Lemma \ref{lemma512}}. 
Conjugacy of the domains $\ogup$ and $\ogdn$ 
allows that the there is a Euclidean motion which glues the 
domains  $\ogup$ and $\ogdn$ together through identifying the
side $P_iP_{i+1}$ with $P_{2g-i-1}P_{2g-i}$: this is evident
from the construction and is illustrated in
Figure~\ref{fig:geomcoord}.
Thus, if we push an edge $E \subset \partial\overline\ogup$ 
into the domain $\ogup$, we will change the
Euclidean geometry of that domain in ways that will force us
to push the corresponding edge $E^* \subset \partial\overline\ogdn$ 
out of the domain $\ogdn$.

Now, given this geometry of the glued complex
$D=\ogup \cup \ogdn$, we observe that we can 
reduce the term $\height(\g_{g-1})$ 
of the height function $\height$ by an 
infinitesimal push on the
edges meeting the boundary of 
$\g_{g-1}$.  Moreover, because the rest of the terms
of the height function $\height$ vanish along the locus $\SY$ 
to second order in the
deformation variable, we see that any deformation of the orthodisk
will not alter (infinitesimally) the contribution of these terms to $\height$.
Thus the only effect of an infinitesimal deformation of an orthodisk
system on $\SY$ to the height function $\height$ is to the
term $\height(\g_{g-1})$, which is non-zero to
first order by Lemma \ref{lemma521}. 
This concludes the proof of the lemma.

\end{proof}

\subsection{Regeneration}

In the previous section we showed how we might reduce the height
function $\height$ at a critical
point of a locus $\SY$, where the locus  $\SY$ was 
defined as the null locus
of all but one of the heights $\height(\gamma_{g-1})$. 
In this section, we prove Lemma \ref{lemma513}, which 
guarantees the existence of such a locus $\SY$.

Let us review the context for this argument. Basically,
we will prove the existence of the genus $g$ Scherk surface,
$S_g$, by using the existence of the genus
$g-1$ Scherk surface, $S_{g-1}$, to imply
the existence of a locus $\SY \subset \Delta_g$ --- the
Lemma \ref{lemma512} and the Properness Theorem \ref{thm421} then prove the
existence of $S_g$. 

Indeed, our proof of the main theorem is by induction: we make the 

{\bf Inductive Assumption A:} There exists a genus $g-1$ 
Scherk surface $S_{g-1}$.

Thus, all of our surfaces are produced from only slightly less
complicated surfaces; this is the general principle of 'handle
addition' referred to in the title.

For concreteness and ease of notation, we will prove the existence
of $S_3$ assuming the existence of $S_2$. The general
case follows with only more notation. Thus, our present goal is the proof of 

\begin{theorem}\label{theorem61} There is a reflexive orthodisk system
for the configuration $S_3$.
\end{theorem}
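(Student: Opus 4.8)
All of the machinery needed for Theorem~\ref{theorem61} is already in place: by the ``Proof of Existence of Reflexive Orthodisks'' recorded above, it suffices to produce the locus $\SY\subset\Delta_3$ of Lemma~\ref{lemma513}, i.e.\ the set $\{\height(\gamma_1)=\height(\delta)=0\}$ (for $g=3$ the list $\height(\gamma_1)=\dots=\height(\gamma_{g-2})=0$ consists of the single equation $\height(\gamma_1)=0$, and $\gamma_2=\gamma_{g-1}$ is the unique free cycle), and to check that it is a nonempty real one-dimensional analytic subspace lying properly in $\Delta_3$. The plan is the \emph{regeneration} step of Section~\ref{sec23}: realize $S_2$ --- which exists by Inductive Assumption~A and is reflexive --- as a point of $\partial\overline{\Delta_3}$, and then open up the node by an implicit function theorem argument to obtain $\SY$ as a one-parameter family emanating from $S_2$ into $\Delta_3$.

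\textbf{The boundary stratum.} In the geometric coordinates of Section~\ref{sec3} --- the finite edge lengths above the diagonal together with the strip width $b$ --- the cycle $\gamma_2=c_2+c_5$ is encircled by the edges $\overline{P_1P_2}$ and its mirror image $\overline{P_4P_5}$, which the $\{y=-x\}$ symmetry forces to have a common length $s:=\ell(\overline{P_1P_2})$. As $s\to 0$ the vertices $P_1,P_2$ coalesce and simultaneously $P_4,P_5$ coalesce, removing two branch points of $X_3$ and producing a genus-two orthodisk pair; thus, near the relevant point, $\{s=0\}$ is a smooth boundary stratum of $\overline{\Delta_3}$ canonically identified with $\Delta_2$. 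Let $p_0\in\{s=0\}$ be the point corresponding to $S_2$. Since the degeneration pinches exactly $\gamma_2$, the cycles $\gamma_1$ and $\delta$ have nondegenerate limits, namely the distinguished genus-two cycles $\gamma_1^{(2)}$ and $\delta^{(2)}$; because $S_2$ is reflexive all of its extremal lengths agree, so the real analytic map $\Psi:=(\ext_{\ogup}(\gamma_1)-\ext_{\ogdn}(\gamma_1),\ \ext_{\ogup}(\delta)-\ext_{\ogdn}(\delta))\colon\overline{\Delta_3}\to\BR^2$, whose zero set is exactly $\SY$, extends real analytically across $\{s=0\}$, restricts there to the genus-two map $\Psi^{(2)}$, and vanishes at $p_0$. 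Hence $p_0\in\overline{\SY}$.

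\textbf{Regeneration via the implicit function theorem.} Since $\Psi|_{\{s=0\}}=\Psi^{(2)}$, the restriction of $d\Psi|_{p_0}$ to the tangent directions of the stratum $\{s=0\}$ equals $d\Psi^{(2)}|_{S_2}$. The crux of the argument is that $d\Psi^{(2)}|_{S_2}$ is invertible: this is the \emph{non-degeneracy} of the genus-two solution, which one carries along with existence in the induction and which rests on the monotone cascade in the proof of Lemma~\ref{lemma413} (the extremal lengths depend monotonically, hence with everywhere nonzero derivative, on the positions of the distinguished boundary points of $\ogup$, and likewise of $\ogdn$) together with the signed derivative computation of Lemma~\ref{lemma521}: an infinitesimal push of the $\gamma_1$-edges changes $\ext_{\ogup}(\gamma_1)-\ext_{\ogdn}(\gamma_1)$ with a definite nonzero sign, a push of the $\delta$-edges changes $\ext_{\ogup}(\delta)-\ext_{\ogdn}(\delta)$ with a definite nonzero sign, and because the two pushes are supported near disjoint edges the two resulting tangent vectors are independent. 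Granting this, $d\Psi|_{p_0}$ is surjective, so $\SY=\Psi^{-1}(0)$ is near $p_0$ a real analytic one-manifold-with-boundary; its tangent line $\ker d\Psi|_{p_0}$ meets $T_{p_0}\{s=0\}$ only at the origin (because $d\Psi^{(2)}|_{S_2}$ is already onto), so $\SY$ is transverse to $\{s=0\}$ and enters the interior $\Delta_3=\{s>0\}$. Continuing $\SY$ analytically, it is a nonempty one-dimensional analytic subset of $\Delta_3$ equal to $\{\height(\gamma_1)=\height(\delta)=0\}$; since $\height$ is proper on all of $\Delta_3$ by Theorem~\ref{thm421} it is proper on $\SY$, and the ``Proof of Existence of Reflexive Orthodisks'' then applies verbatim to yield the desired reflexive orthodisk system. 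The general inductive step $S_{g-1}\rightsquigarrow S_g$ is the same argument with more indices: $\SY$ is cut out in $\Delta_g$ by the $g-1$ equations $\height(\gamma_1)=\dots=\height(\gamma_{g-2})=\height(\delta)=0$, and the implicit function theorem is driven by the invertibility of $d\Psi^{(g-1)}$ at $S_{g-1}$.

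\textbf{Main obstacle.} The one genuinely new ingredient is the invertibility of $d\Psi^{(g-1)}$ at the lower-genus solution, i.e.\ that the reflexive orthodisk produced inductively is a \emph{transverse} zero of the extremal-length-difference map rather than merely a zero; existence alone does not supply this, so it must be established and then propagated through the induction from the structural monotonicity of Lemma~\ref{lemma413} and the signed derivative formula of Lemma~\ref{lemma521}. A secondary, more routine, point is the bookkeeping that $\Psi$ extends real analytically across the stratum $\{s=0\}$ and that $\gamma_1$ and $\delta$ degenerate to the genuine genus-$(g-1)$ cycles; this follows from the Schwarz--Christoffel and extremal-length asymptotics already developed for the properness theorem and from Proposition~\ref{prop:extanalytic}.
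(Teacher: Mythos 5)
Your overall strategy is the paper's: realize the genus-two solution as a point of a boundary stratum of $\overline{\Delta_3}$, define the map of extremal-length differences for the nondegenerate cycles $\gamma_1$ and $\delta$, and regenerate the locus $\SY$ into the interior by the implicit function theorem. However, there is a concrete error in your identification of the boundary stratum, and it breaks the argument as you have set it up. You degenerate by shrinking $\overline{P_1P_2}$ and (by the $\{y=-x\}$ symmetry) its mirror image $\overline{P_4P_5}$. This removes \emph{four} of the seven staircase vertices $P_0,\dots,P_6$ ($P_1,P_2$ merge into a straight angle and disappear, and so do $P_4,P_5$), leaving the three vertices $P_0,P_3,P_6$ of a \emph{genus-one} configuration, not a genus-two one: each coalescing mirror pair kills one handle, and your symmetric degeneration kills two. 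The stratum containing $S_2$ is instead reached by letting the three \emph{central} vertices $P_2,P_3,P_4$ coalesce, i.e.\ by shrinking the mirror-symmetric pair of edges $\overline{P_2P_3}$ and $\overline{P_3P_4}$ (precisely the edges not encircled by any $c_i$); this removes exactly two vertices and leaves the five vertices of a genus-two staircase. The distinction is not cosmetic: with your choice the stratum is $\Delta_1$, of dimension one, and the restriction of the two-component map $\Psi$ to its tangent space cannot be an isomorphism onto $\BR^2$, so the implicit function theorem step cannot be run from there. With the central degeneration the stratum is the two-dimensional $\Delta_2$ and the argument proceeds as you describe.

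Two further remarks on the invertibility of $d\Psi$ restricted to the stratum, which you correctly single out as the crux. First, the paper does not carry ``non-degeneracy of $S_{g-1}$'' as a strengthened inductive hypothesis; it establishes the invertibility directly at the genus-two solution from the infinitesimal push computations (Lemma \ref{lemma521}): given an arbitrary nonzero perturbation of the genus-two geometric coordinates, some edge moves strictly inward in $\ogup$ and the corresponding edge outward in $\ogdn$, and pairing the perturbation with a curve system footing on the moving edges shows the two extremal lengths move in opposite directions, so $d\Phi$ has no kernel. Second, your justification of invertibility --- that the pushes on the $\gamma_1$-edges and on the $\delta$-edges give independent image vectors ``because the two pushes are supported near disjoint edges'' --- ignores the off-diagonal entries of the resulting $2\times2$ matrix: a push supported near the $\gamma_1$-edges also changes $\ext(\delta)$, so disjointness of supports does not by itself yield a triangular or diagonally dominant matrix. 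The no-kernel formulation avoids this issue and is the one you should adopt.
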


\begin{proof} Let us use the given height 
$\height_{3}$ for $S_3$ and consider how the
height $\height_{2}$ for $S_2$ relates to it, 
near a solution for the genus $2$
problem. 

Our notation is given in section \ref{sec41} and is recorded in 
the diagrams below: for instance,
the curve system $\delta $ connects the edges $E_2P_0$ and $P_{6}E_1$.

\begin{figure}[h] 
\centering
\includegraphics[width=3.5in]{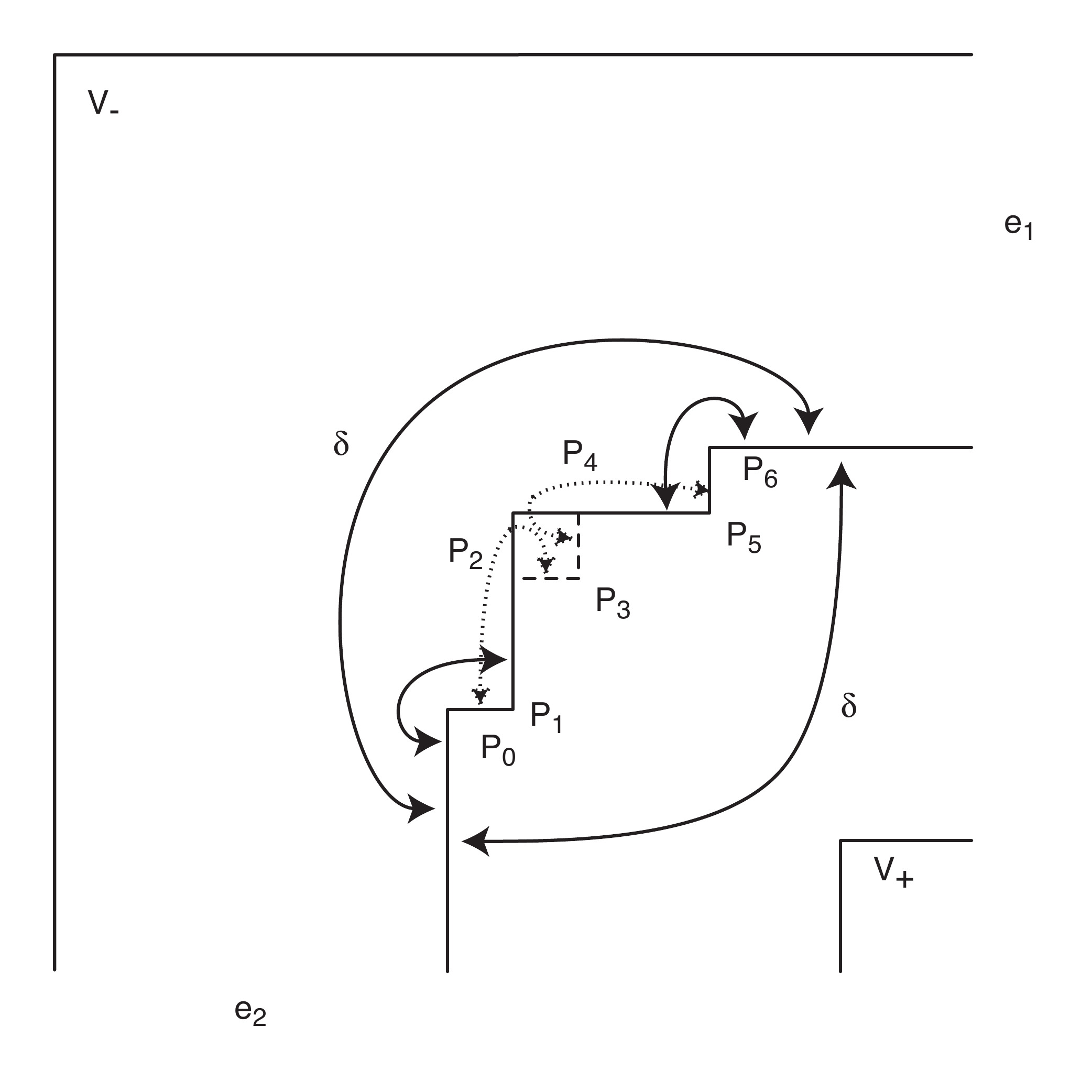} 
\caption{Curve system used for regeneration}
\label{fig:reg}
\end{figure}

We are interested in how an orthodisk system might degenerate.
One such degeneration is shown in the next figure, where the 
points $P_2$, $P_3$, and $P_4$ have coalesced.
The point is that the degenerating family of (pairs
of) Riemann surfaces in $\Delta_{3}$ limits on (a pair of) surfaces
with nodes. (We recall that a surface with nodes is a complex space
where every point has a neighborhood complex isomorphic to
either the disk $\{|z|<1\}$ or a pair of disks 
$\{(z,w)| zw=0\}$ in $\BC^2$.)
In the case of the surfaces corresponding to $\ogup$ and $\ogdn$, the
components of the noded surface (i.e. the regular components of the
noded surface in the complement of the nodes)
are difficult to observe, as the flat structures on the
thrice-punctured sphere components are 
simply single points.

An important issue in this section is that some of our curves
cross the pinching locus on the surface, i.e. the curve on the 
surface which is being collapsed to form the node. In particular,
in the diagram, the dotted curves $\g_2$ are such curves, so their
depiction in the degenerated figure is, well, degenerate: the curves
connect a point and an edge.

\begin{figure}[h] \centering
\includegraphics[width=3.5in]{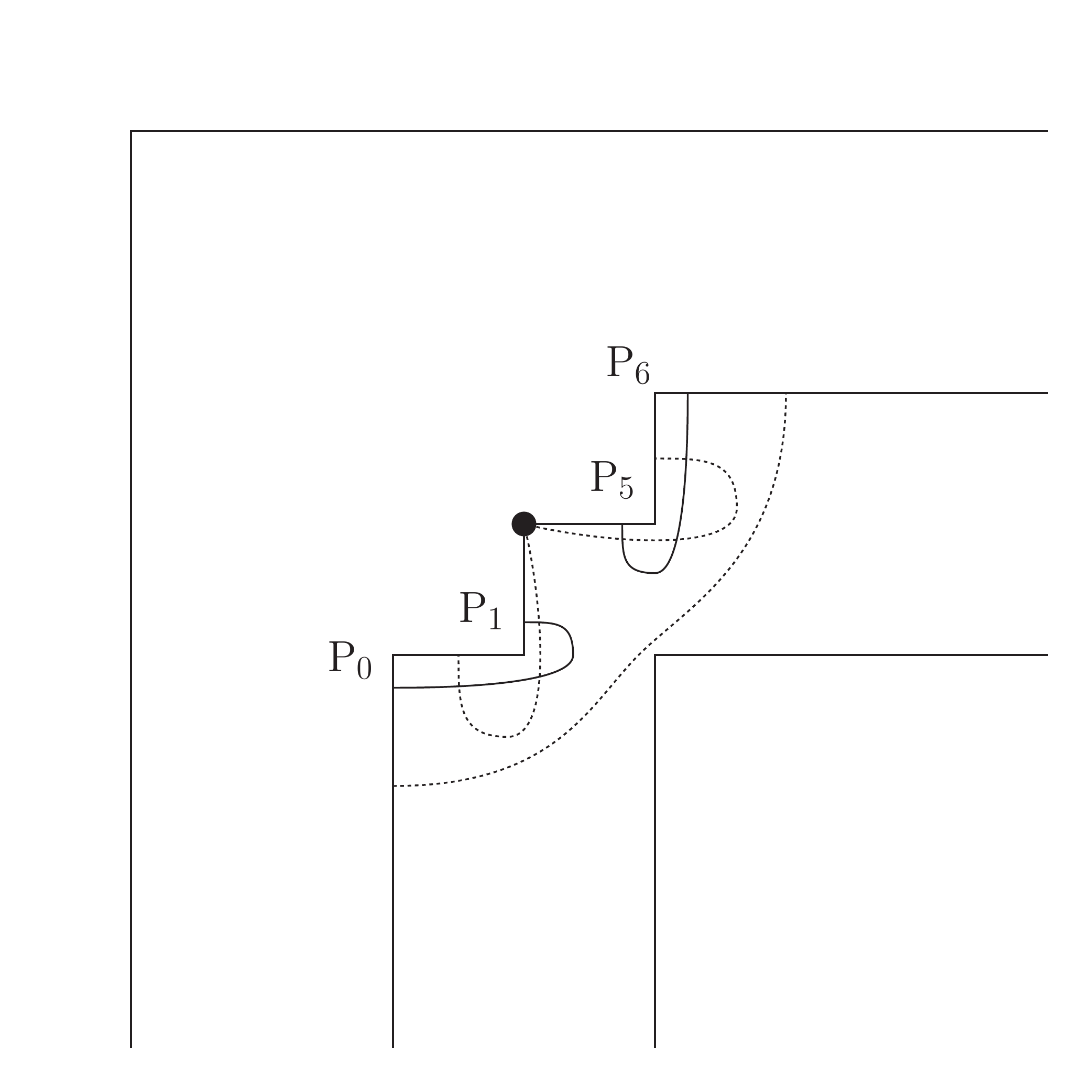} 
\caption{Regenerating orthodisks for the Scherk surfaces}
\label{fig:reg2}
\end{figure}

Note that when we degenerate, we are left with
the orthodisks for the surface of one lower genus, in this case that of
$S_2$.

Our basic approach is to work backwards from this understanding of
degeneration --- we aim to ``regenerate'' the locus $\SY$ in 
$\Delta_3$ from the solution $X_2 \in \Delta_{2} \subset \partial\bar{\Delta_3}$.

We focus on the curves $\delta $ and $\g_1$, ignoring the degenerate
curve $\g_2$. 

(In the general case for $\Delta_{g}$, there are $g-1$
non-degenerate curves 
$\{ \delta, \g_1,..., \g_{g-2}\}$), 
and one degenerate
curve $\g_{g-1}$.)

We restate Lemma \ref{lemma513} in terms of the present (simpler) notation.

\begin{lemma}\label{Lemma 6.2 (Regeneration)}: There is a one-dimensional
analytic closed locus $\SY\subset\ov{\Delta_g}$ so that both
$\ext_{\ogup}(\g_i)=\ext_{\ogdn}(\g_i)$ for $i=1,\dots, g-2$ 
and $\ext_{\ogup}(\delta)=\ext_{\ogdn}(\delta)$
on $\SY$, and $\SY$ is proper
in $\Delta_g$.
\end{lemma}

\begin{proof}  We again 
continue with the notation for $g=3$.

As putatively defined in the statement of the
lemma, $\SY$ would be clearly closed, and would have non-empty
intersection with $\ov{\Delta_2}$ as $\ov{\Delta_2}$ contains the
solution $S_2$ to the genus 2 problem.

We parametrize $\ov{\Delta_3}$ near $X_2$ as
$\Delta_2\x[0,\e)$ and consider the map
$$
\Phi:(X,t): \Delta_2\x[0,\e)\lra\BR^2
$$
given by
$$
(X,t)\mapsto(\ext_{\ogup}(\delta)-\ext_{\ogdn}(\delta),
\ext_{\ogup}(\g_1)-\ext_{\ogdn}(\g_1)).
$$
Here, the coordinate $t$ refers to a specific choice
of normalized geometric coordinate, i.e.
$t=\im(P_0P_1\lra P_2P_3) = \re(P_3P_4\lra P_5P_6)$,
where the periods $(P_0P_1\lra P_2P_3)$ and  $(P_3P_4\lra P_5P_6)$
are measured on the domain $\ogup$. 
In terms of these coordinates, we 
note that whenever either $t=0$, we are
in a boundary stratum of $\ov{\Delta_3}$. The locus
$\{ t>0\} \subset \ov{\Delta_3}$ is a neighborhood
in $Int(\Delta_3)$ with $X_2$ in its closure.

Note that $\Phi(X_2,0)=0$ as $X_2$ is reflexive.

Now, to find the locus $\SY$, we apply the implicit function theorem.
The implicit function theorem says that if

(i) the map $\Phi$ is differentiable, and

(ii) the differential $d\Phi\bigm|_{T_{X_2}\Delta_2}$ 
is an isomorphism onto $\BR^2$,

\noindent
then there exists a
differentiable family $\SY\subset\ov{\Delta_3}$ for which
$\Phi\bigm|_\SY\equiv0$.

We now prove the differentiability
(condition (i)) of $\Phi$: as the locus of
$\ov{\Delta_2}\in\wh{\ov\SM_2} \x \wh{\ov\SM_2}$ is differentiable (here
$\wh{\ov\SM_2}$ refers to a smooth cover of the relevant neighborhood
of $S_2\subset\ov\SM_2$, where $\ov\SM_2$ is the Deligne-Mostow
compactification of the moduli space of curves of genus
two) the theorem of Gardiner-Masur [GM]
implies that $\Phi$
is differentiable, as we have been very careful to choose curves
$\{\delta, \g_1\}$ which are non-degenerate in a neighborhood
of $\ov{\Delta_2}$ near the genus two solution $S_2$,
with both staying in a single regular component of the noded surface.

We are left to treat (ii), the invertibility of the 
differential $d\Phi\bigm|_{T_{S_2}\Delta_2}$.
To show that $d\Phi\bigm|_{T_{S_2}\Delta_2}$ is an isomorphism, we
simply prove that it has no kernel. To see this, choose a
tangent direction in $T_{S_2}\Delta_2$ interpreted 
as a perturbation of the geometric
coordinates for $S_2$. To be concrete, we might fix the 
distance between 
the parallel semi-infinite sides
and vary the finite lengths (or periods) of the sides 
$P_iP_{i+1}$. 
Now, up to
replacing the infinitesimal variation with its negative, 
one of the finite-length edges has moved into the
interior of
$\ogup$ as in this case, with our normalization, the only edges
free to move are those finite edges. Connect each
of those positively moving edges with a curve system from
$E_2P_0$ (and, symmetrically, $P_4E_1=P_{2g}E_1$).
The result is a large curve system (say $\Gamma$)
consisting of 
classes of curves from (possibly) several free homotopy classes.
In addition, let $\nu$ be the associated Beltrami differential
to this variation, as in section \ref{sec52}.

The flow computations in section \ref{sec5}  then say that this entire curve system 
$\Gamma$ has
extremal length in $\ogup$ which has decreased by an
amount proportional to $|\nu|$ while the extremal length on $\ogdn$ has
increased by an
amount proportional $|\nu|$ on $\ogdn$. Thus, in any set of differentiable coordinates for the Teichmuller
space of the surfaces in $\Delta_2$, the difference of coordinates for 
$\ogup $and $\ogdn$ is by an amount proportional to $|\nu|$.
Thus $d\Phi(\nu) \ge c|\nu|$ 
(for $c>0$) which proves the assertion.

To finish the proof of the lemma, we need to show that
$\SY\bigm|_{\Delta_3}$ is an analytic
submanifold of $T_3\x T_3$, where $T_3$ is the \tec space of 
genus three curves: this follows {\it on the interior of}
$\Delta_3$ from the fact that Ohtsuka's formulas for extremal length
are analytic and the map from $\Delta_2$ to extremal lengths has
non-vanishing derivative.

This concludes the proof of Lemma \ref{Lemma 6.2 (Regeneration)} for the case $g=3$ and hence
also
the proof of Theorem \ref{theorem61}. We have already noted that the argument
is completely general, despite our having presented it in the concrete
case of $S_3$; thus, by adding more notation, we have proven
Lemma \ref{Lemma 6.2 (Regeneration)} in full generality. Naturally, this also completes the
proof of Lemma \ref{lemma513}.

\end{proof}

\section{Embeddedness of the doubly-periodic  Scherk surface with handles }
\label{sec6}

In this section, we follow Karcher
\cite{ka4} and use the conjugate surface method from section~\ref{sec211}
to prove

\begin{proposition}\label{Proposition 6.1}
The doubly-periodic Scherk surface with handles ($S_g$) is
embedded.
\end{proposition}

We  consider a quarter of the surface, as defined by the shaded lower
left quarter square of the fundamental domain of the torus, as
in Figure \ref{fig:s1divs}.

Of course,
this is also the part of the surface used to define the
orthodisks $\ogup$ and $\ogdn$. This surface patch, say $\Sigma_g$,
is bounded by planar
symmetry curves and one vertical end and is contained within the infinite
box over a 'black' checkerboard square. It will be sufficient to show
that this patch $\Sigma_g$
is embedded, as the rest of the surface $S_g$ in space
is obtained by reflecting the image $\Sigma_g$ of this quarter surface 
across vertical planes.
To show that the image of the quarter surface is embedded,
we prove that the conjugate surface is a graph over a 
convex domain; the result then follows by Krust's Theorem
in section \ref{sec211}.  Thus we prove

\begin{lemma}\label{lemma62} The conjugate surface for $\Sigma_g$ is
a minimal graph over a convex domain.
\end{lemma}

We apply the basic principles that straight lines and planar
symmetry curves get interchanged by the operation of conjugation
of minimal surfaces, and angles get preserved by this operation.
Using these principles, we compute the conjugate surface for
$\Sigma_g$.

We assert that the conjugate
surface has the form depicted in the figure \ref{fig:conjsch1} 
  and described below. 
The surface in the figure extends horizontally along the positive horizontal coordinate
directions to infinity.

\begin{figure}[h] 
\centering
\includegraphics[width=2in]{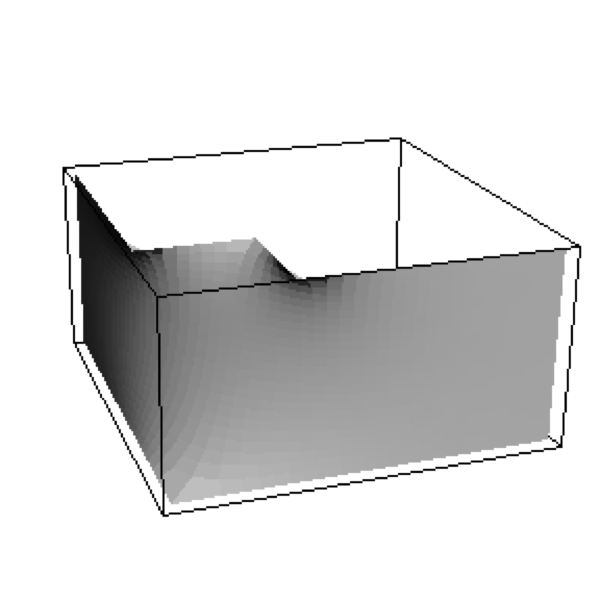} 
\caption{Conjugate surface patch of the doubly-periodic Scherk
surface with handles}
\label{fig:conjsch1}
\end{figure}

The surface patch will be bounded by two polygonal arcs in space.
The first one corresponds to the $P_1\ldots P_{2g}$ polygonal 
arc of the orthodisks.
As the original surface is cut orthogonally by two symmetry planes along this arc, the corresponding 
arc of the conjugate surface
will consist of orthogonal line segments which stay in the same horizontal plane:
to see this, begin by observing that the \ga map has vertical normals at 
the points corresponding to the vertices of the orthodisk boundaries,
hence also at the corners of the straight line segments
on the conjugate surfaces.  Yet where two of these segments 
meet, the tangent plane is tangent to both of them, hence the normal is
in the unique direction normal to both; as the \ga map is
vertical, both segments must be horizontal. We conclude then 
that all of the straight segments must be horizontal, and hence this
connected component of the boundary must lie in a horizontal plane. 

Simlarly the second connected arc, this time made up of a pair of
infinite arcs, is also horizontal, thus parallel to the first boundary arc.

We next claim that the two horizontal
connected components of the boundary have 
(a pair of) parallel infinite
edges as ends which lie on the same (pair of) vertical planes.
To see this, consider the (pair of) cycles around the two
ends $E_1$ and $E_2$: we have constructed the \wei data
one-forms $Gdh$ and $\frac1G dh$ so that the coordinate
one-forms $(G- \frac1G)dh$ and $i(G+\frac1G)dh$ have purely
real periods, while the coordinate one-form $dh$ has a purely
imaginary period.  Thus for the conjugate surface, the 
coordinate
one-forms $(G- \frac1G)dh$ and $i(G+\frac1G)dh$ have purely
imaginary periods, while the coordinate one-form $dh$ has a purely
real period. As any representative of this cycle lifts to connect 
semi-infinite ends of the connected components of the boundary,
and any such lift must have endpoints differing by a period,
we see that the semi-infinite ends of the connected components
of the boundary differ (respectively) by a purely vertical 
translation.

We now produce a minimal surface which spans this pair of boundary
components.  To do this, we first approximate the boundary by a compact
boundary, then solve the corresponding Plateau problem for compact boundary
values, and then finally take a limit.  More precisely, consider a 
boundary formed from the boundary arcs described above by introducing vertical
segments $\G_{b,1}$ and $\G_{b,2}$ connecting the two pairs of parallel semi-infinite
boundary edges at distance $b$ from the (image of the) point $P_g$. This boundary is now
compact and projects  injectively onto a rectangle boundary by using the
projection in the direction of the 'diagonal' vector $(1,1,0)$.

This corresponding boundary
problem has a classical Plateau solution which is
unique and a graph over the plane orthogonal to $(1,1,0)$ by Rad\'o's
theorem. We now look at a sequence of such Plateau
solutions for increasing values of
$b\to\infty$. Using solutions for smaller values of $b$ as barriers
for the solutions corresponding to the larger values of $b$, by
the maximum principle we see that the solutions $S_{g,b}^*$ form an increasing
sequence of graphs.
To show that this sequence actually converges, it suffices to 
show that the intersections $A_b$ of the family of surfaces with the 
vertical plane $\Pi$ passing through $P_g$ and $V_{\pm}$ lie in a single
compact set.  To see this, we consider a pair of (quarters of)
Scherk surfaces (conjugate to $S_0$), each of whose boundary 
components consist of an infinite
$L$ and a vertical translate of that $L$ by the same amount as for
a conjugate surface for $S_g$. The first such conjugate Scherk
passe through the point $P_g$, while the second is displaced to pass
through the points $V_{\pm}$.  This pair of boundary arcs lie to the
other side of the boundary arcs of $S_g$ in the respective horizontal
planes. Thus, this pair of surfaces meets the plane $\Pi$ in a pair of arcs,
which, by the maximum principle lie to either side of the arcs $A_b$
on the strip of $\Pi$ between our fixed pair of horizontal planes.

Thus, by Harnack, the approximate solutions $S_{g,b}^*$ converge
to a solution $S_g^*$ for the infinite boundary problem; as the approximate 
solutions are all graphs and minimal, the
limit $S_g^*$ is also a graph (here convergence of the
graphed functions $u_b$ in $C^0$ implies their convergence
in $C^1$ by standard elliptic theory, hence to a graph). Then,
by Krust's theorem, the conjugate patch to that limit
graph is also a graph, and since that 
conjugate patch is a fundamental piece of our surface $S_g$, we see
that $S_g$ is embedded.
\end{proof}

\addcontentsline{toc}{section}{References}
\bibliographystyle{plain}
\bibliography{bill}

\begin{thebibliography}{10}

\bibitem{brb1}
Frank Baginski and Valerio Ramos-Batista.
\newblock Solving period problems for minimal surfaces with the support
  function.
\newblock preprint, 2008.

\bibitem{bow1}
E.~Boix and M.~Wohlgemuth.
\newblock Numerical results on embedded minimal surfaces of finite total
  curvature.
\newblock In perpetual preparation.

\bibitem{cowe1}
P.~Connor and M.~Weber.
\newblock Doubly periodic minimal surfaces with parallel ends.
\newblock Preprint, 2009.

\bibitem{dhkw1}
U.~Dierkes, S.~Hildebrandt, A.~K\"{u}ster, and O.~Wohlrab.
\newblock {\em Minimal Surfaces I}.
\newblock Grundlehren der mathematischen {W}issenschaften 295. Springer-Verlag,
  1992.
\newblock MR1215267, Zbl 0777.53012.

\bibitem{dou1}
C.~Douglas.
\newblock {\em Doubly Periodic Minimal Surfaces of Genus 1}.
\newblock PhD thesis, Rice University, Houston, 2008.

\bibitem{flp}
A.~Fathi, F.~Laudenbach, and V.~Poenaru.
\newblock {\em Traveaux de Thurston sur les Surfaces}.
\newblock Soci\'et\'e Math\'ematique de France.
\newblock Ast\'erisque.

\bibitem{gar1}
F.P. Gardiner.
\newblock {\em Teichmuller Theory and Quadratic Differentials}.
\newblock Wiley Interscience, New York, 1987.

\bibitem{gama1}
F.P. Gardiner and H.~Masur.
\newblock Extremal length geometry of teichm\"uller space.
\newblock {\em Complex Analysis and its Applications}, 16:209--237, 1991.

\bibitem{hum1}
J.~Hubbard and H.~Masur.
\newblock Quadratic differentials and foliations.
\newblock {\em Acta Math.}, 142:221--274, 1979.

\bibitem{j1}
H.~Jenkins.
\newblock On the existence of certain general extremal metrics.
\newblock {\em Ann. of Math.}, 66:440--453, 1957.
\newblock MR0190811 , Zbl 171.08301.

\bibitem{ka4}
H.~Karcher.
\newblock Embedded minimal surfaces derived from {S}cherk's examples.
\newblock {\em Manuscripta Math.}, 62:83--114, 1988.
\newblock MR0958255, Zbl 658.53006.

\bibitem{ka6}
H.~Karcher.
\newblock Construction of minimal surfaces.
\newblock {\em Surveys in Geometry}, pages 1--96, 1989.
\newblock University of Tokyo, 1989, and Lecture Notes No. 12, SFB256, Bonn,
  1989.

\bibitem{ker1}
S.~Kerckhoff.
\newblock The asymptotic geometry of teichm\"uller space.
\newblock {\em Topology}, 19:23--41, 1980.

\bibitem{la1}
H.~B. {Lawson, Jr.}
\newblock {\em Lectures on Minimal Submanifolds}.
\newblock Publish or Perish Press, Berkeley, 1980.
\newblock MR0576752, Zbl 0434.53006.

\bibitem{lhm}
H.~Lazard-Holly and W.~H. Meeks~III.
\newblock The classification of embedded doubly--periodic minimal surfaces of
  genus zero.
\newblock {\em Invent. Math.}, 143:1--27, 2001.
\newblock MR1802791, Zbl 992.18190.

\bibitem{mr4}
W.~H. Meeks~III and H.~Rosenberg.
\newblock The geometry, topology, and existence of doubly periodic minimal
  surfaces.
\newblock {\em C. R. Acad. Sci. Paris}, 306:605--609, 1988.
\newblock Research Announcement.

\bibitem{mr3}
W.~H. Meeks~III and H.~Rosenberg.
\newblock The global theory of doubly periodic minimal surfaces.
\newblock {\em Invent. Math.}, 97:351--379, 1989.
\newblock MR1001845, Zbl 676.53068.

\bibitem{Oht}
M.~Ohtsuka.
\newblock {\em Dirichlet Problem, Extremal Length and Prime Ends}.
\newblock Van Nostrand Reinhold, New York, 1970.

\bibitem{os1}
R.~Osserman.
\newblock {\em A Survey of Minimal Surfaces}.
\newblock Dover Publications, New York, 2nd edition, 1986.
\newblock MR0852409, Zbl 0209.52901.

\bibitem{prt1}
J.~P\'{e}rez, M.~Rodr\'{\i}guez, and M.~Traizet.
\newblock The classification of doubly periodic minimal tori with parallel
  ends.
\newblock {\em Journal of Diff. Geometry}, 69(3):523--577, 2007.
\newblock MR2170278, Zbl pre05004289.

\bibitem{sche1}
H.~F. Scherk.
\newblock Bemerkungen \"{u}ber die kleinste {F}l\"{a}che innerhalb gegebener
  {G}renzen.
\newblock {\em J. R. Angew. Math.}, 13:185--208, 1835.
\newblock ERAM 013.0481cj.

\bibitem{stre1}
K.~Strebel.
\newblock {\em Quadratic Differentials}.
\newblock Springer, Berlin, 1984.

\bibitem{tha1}
E.~Thayer.
\newblock {\em Complete Minimal Surfaces in {E}uclidean 3-Space}.
\newblock PhD thesis, University of Massachusetts at Amherst, 1994.

\bibitem{tra1}
M.~Traizet.
\newblock An embedded minimal surface with no symmetries.
\newblock {\em J. Differential Geometry}, 60(1):103--153, 2002.
\newblock MR1924593, Zbl 1054.53014.

\bibitem{tra7}
M.~Traizet.
\newblock Adding handles to {R}iemann minimal examples.
\newblock {\em J. Differential Geom.}, 79:243--275, 2008.
\newblock MR2420019, Zbl pre05294962.

\bibitem{rtw1}
M.~Wohlgemuth W.~Rossman, E.~Thayer.
\newblock Embedded, doubly periodic minimal surfaces.
\newblock {\em Exp. Math.}, 9(2):197--219, 2000.

\bibitem{whw1}
M.~Weber, D.~Hoffman, and M.~Wolf.
\newblock An embedded genus-one helicoid.
\newblock {\em Ann. of Math.}, 169(2):347--448, 2009.
\newblock MR2480608.

\bibitem{ww1}
M.~Weber and M.~Wolf.
\newblock Minimal surfaces of least total curvature and moduli spaces of plane
  polygonal arcs.
\newblock {\em Geom. Funct. Anal.}, 8:1129--1170, 1998.
\newblock MR1664793, Zbl 0954.53007.

\bibitem{ww2}
M.~Weber and M.~Wolf.
\newblock Teichm\"{u}ller theory and handle addition for minimal surfaces.
\newblock {\em Ann. of Math.}, 156:713--795, 2002.
\newblock MR1954234, Zbl 1028.53009.

\bibitem{wei2}
F.~Wei.
\newblock Some existence and uniqueness theorems for doubly periodic minimal
  surfaces.
\newblock {\em Invent. Math.}, 109:113--136, 1992.
\newblock MR1168368, Zbl 773.53005.

\bibitem{w2}
M.~Wohlgemuth.
\newblock Higher genus minimal surfaces by growing handles out of a catenoid.
\newblock {\em Manuscripta Math.}, 70:397--428, 1991.

\bibitem{w3}
M.~Wohlgemuth.
\newblock {\em Vollst\"{a}ndige {M}inimalfl\"{a}chen h\"{o}heren {G}eschlechts
  und endlicher {T}otalkr\"{u}mmung}.
\newblock PhD thesis, University of Bonn, April 1993.

\bibitem{w4}
M.~Wohlgemuth.
\newblock Higher genus minimal surfaces of finite total curvature.
\newblock Preprint, 1994.

\bibitem{wo98}
Michael Wolf.
\newblock On realizing measured foliations via quadratic differentials of
  harmonic maps to {$\bold R$}-trees.
\newblock {\em J. Anal. Math.}, 68:107--120, 1996.

\end{thebibliography}

\label{sec:liter}

\end{document}